\documentclass[reqno,12pt]{amsart}        
\usepackage{amsfonts,amsmath,amssymb,amsthm,colordvi,mathrsfs,graphicx}
\usepackage{caption,subcaption,float}
\usepackage[utf8]{inputenc}
\usepackage{mathrsfs}
\usepackage{geometry}
\geometry{margin=1in}

\usepackage{enumerate}

\usepackage{amsmath, amssymb, amsthm, geometry, hyperref}

\usepackage{tabularx}
\usepackage{tabulary}
 
\hypersetup{
  colorlinks=true,
  linkcolor=blue,
  citecolor=blue,
  urlcolor=blue
}
\setlength{\parskip}{0.2cm}
\setlength{\parindent}{12pt}%

\usepackage[all,knot,poly]{xy}
\usepackage{tikz-cd}
\usepackage{tikz}
\usepackage{verbatim}
\usetikzlibrary{arrows}
\usetikzlibrary{knots}
\tikzset{every path/.style={line width=0.4pt},every node/.style={transform shape,knot crossing,inner sep=1.5pt},>=triangle 60,text node/.style={rectangle,transform shape=false,black}}

 \usetikzlibrary{decorations.markings, arrows.meta}

\headheight=8pt       \topmargin=-8pt
\textheight=630pt     \textwidth=470pt
\oddsidemargin=6pt   \evensidemargin=6pt

\theoremstyle{plain}      
\newtheorem{thm}{Theorem}[section]     
\newtheorem{theorem}[thm]{\bf Theorem}     
\newtheorem{corollary}[thm]{\bf Corollary}     
\newtheorem{lemma}[thm]{\bf Lemma}     
\newtheorem{proposition}[thm]{\bf Proposition}

\theoremstyle{remark}      
\newtheorem{example}[thm]{Example} 
 
\newtheorem{remark}[thm]{Remark} 
     
\theoremstyle{definition}      
\newtheorem{definition}[thm]{Definition}     










\vbadness=10000
\hbadness=10000
\tolerance=2000

\subjclass[2020]{14B05, 14B10, 32G20, 14B07}
\keywords{Infinitesimal deformations, maximal variations, residue,  isolated singularities}
\begin{document}


 

\author{Mounir Nisse}
 
\address{Mounir Nisse\\
Department of Mathematics, Xiamen University Malaysia, Jalan Sunsuria, Bandar Sunsuria, 43900, Sepang, Selangor, Malaysia.
}
\email{mounir.nisse@gmail.com, mounir.nisse@xmu.edu.my}
\thanks{}
\thanks{This research is supported in part by Xiamen University Malaysia Research Fund (Grant no. XMUMRF/ 2020-C5/IMAT/0013).}

\title{Residue Balancing on Singular Curves} %

\maketitle


\bigskip

 
\begin{abstract}
This paper investigates residue maps and their spanning properties for singular algebraic
curves, with particular emphasis on three interconnected themes: the
\emph{scheme--theoretic residue span}, the \emph{residue--balancing principle}, and
\emph{residue balancing in the presence of arbitrary singularities}. 
Starting from the theory of dualizing sheaves on nodal curves, we reinterpret canonical
and higher--order differentials as meromorphic objects on the normalization whose local
principal parts are constrained by explicit residue conditions. A key result is the
scheme--theoretic residue span theorem, which asserts that 
for nodal curves of geometric genus $g$ with $\delta$ nodes, when
$\delta \ge g$ the residue functionals at the nodes span
$H^0(C,\omega_C)^\vee$, so canonical differentials are completely determined by their
residue data.
This provides a concrete, linear description of
$H^0(C,\omega_C)$ and yields powerful applications to deformation theory, Severi
varieties, and moduli problems.

\vspace{0.1cm}

We then develop the residue--balancing principle, showing that global residue conditions
on each irreducible component of a singular curve are equivalent to local balancing
conditions at the singular points. This equivalence clarifies the local--to--global
structure of dualizing sheaves and extends naturally to $k$--differentials. Finally, we
address the case of arbitrary singularities, where nodes no longer suffice to describe
local geometry. Using normalization and the conductor ideal, we formulate a refined
balancing principle that replaces simple residue cancellation by higher--order and
conductor--level constraints. Together, these results provide a unified framework for
understanding how local singular behavior governs global differentials and their
deformations.
\end{abstract}

\vspace{0.3cm}
\bigskip


\section{Introduction}

\subsection*{Residues as a Local--to--Global Principle}

Residues are among the most classical invariants in complex analysis and algebraic
geometry. On a smooth compact Riemann surface $X$, every meromorphic differential
$\omega$ satisfies the global residue theorem
\begin{equation}\label{eq:global-residue}
\sum_{p \in X} \operatorname{Res}_p(\omega) = 0.
\end{equation}
This equality expresses a deep compatibility between local analytic behavior and global
topology. From the algebro--geometric perspective, it reflects the fact that the canonical
bundle $\omega_X$ has degree $2g-2$ and that meromorphic sections are subject to global
linear constraints.

When one moves from smooth curves to singular curves, the nature of residues changes
fundamentally. Singularities introduce new local phenomena, and the classical residue
theorem must be reformulated in a way that incorporates normalization, dualizing
sheaves, and local correction terms. Among singular curves, nodal curves provide the
simplest and most instructive setting. Locally analytically, a node is given by
\[
xy = 0,
\]
and its normalization separates the two branches. A regular differential on a nodal
curve is no longer holomorphic everywhere; instead, it corresponds to a meromorphic
differential on the normalization with simple poles at the preimages of the node and
opposite residues on the two branches.

This observation leads to the guiding principle of the present work: \emph{global
differentials on singular curves are controlled by local residue data subject to
balancing conditions}. Making this principle precise, and understanding its consequences
in families, in higher order, and for more complicated singularities, is the main goal
of this paper.

\subsection*{ Dualizing Sheaves and Scheme--Theoretic Residues}

Let $C$ be a reduced, irreducible projective curve over $\mathbb{C}$, possibly singular,
and let $\nu:\widetilde{C}\to C$ be its normalization. The correct replacement for the
canonical bundle on $C$ is the dualizing sheaf $\omega_C$. When $C$ has only nodal
singularities, $\omega_C$ is invertible and admits a concrete description in terms of
the normalization:
\begin{equation}\label{eq:dualizing-sequence}
0 \longrightarrow \omega_C \longrightarrow \nu_*\omega_{\widetilde{C}}
\longrightarrow \bigoplus_{i=1}^{\delta} k(P_i) \longrightarrow 0,
\end{equation}
where $P_1,\dots,P_\delta$ are the nodes of $C$. The quotient in
\eqref{eq:dualizing-sequence} is generated by residue classes at the nodes, and the
associated connecting morphism in cohomology gives rise to the residue map
\[
\operatorname{Res}: H^0(C,\omega_C) \longrightarrow k^\delta.
\]

From a scheme--theoretic point of view, these residues are not merely analytic
quantities; they are canonical linear functionals arising from the exact sequence
\eqref{eq:dualizing-sequence}. This interpretation is essential for understanding how
residues behave in families and under deformation, and it leads naturally to the notion
of \emph{scheme--theoretic residues}.

A central result of this paper is the \emph{scheme--theoretic residue span theorem}. In
its simplest form, it asserts that if $C$ is a nodal curve of geometric genus $g$ with
$\delta\ge g$ nodes, then the residue functionals
\[
r_i : H^0(C,\omega_C) \longrightarrow k
\]
associated with the nodes $P_i$ span the entire dual space
$H^0(C,\omega_C)^\vee$. Equivalently, the map $\operatorname{Res}$ is injective, and its
image is a $g$--dimensional subspace of $k^\delta$ whose dual recovers
$H^0(C,\omega_C)^\vee$,and we have the following theorem:

This spanning property has far--reaching consequences. It implies that canonical
differentials are completely determined by their residues at the nodes, and it provides
a powerful linearization of problems that are otherwise global and nonlinear.

\subsection*{ Residue Span and Deformation Theory}

One of the main motivations for studying residue maps comes from deformation theory.
Infinitesimal deformations of a nodal curve that preserve the analytic type of each node
are governed by global sections of the dualizing sheaf. Scheme--theoretic residues
measure precisely how these deformations interact with the singularities.

When the residue functionals span the dual space, any nontrivial deformation must be
detected at the level of residues. This observation leads to clean injectivity and
surjectivity statements for natural maps in deformation theory. For instance, in the
context of Severi varieties of plane curves with prescribed numbers of nodes, the dual
of the differential of the natural map is identified with the residue map. The residue
span theorem then implies smoothness and expected--dimension results when $\delta\ge g$.

\vspace{0.2cm}

\begin{theorem}[Scheme-theoretic residue span]
Let $\mathcal{C}$ be a reduced, irreducible plane curves of degree
$d\ge 4$ with only  nodal singularities and the number of its nodes is equal to $\delta$.
 Let $\nu: C\to\mathcal C$ be 
its normalization with genus $g$, and
assume $\delta\ge g$. Then the residue functionals
$\{r_1,\dots,r_\delta\}$ span $H^0( C,\omega_{C})^\vee$.
\end{theorem}

\vspace{0.2cm}

More conceptually, the scheme--theoretic residue span expresses a rigidity phenomenon:
once sufficiently many nodes are present, global deformations are forced to interact
with local singular data in a maximal way.

\subsection*{The Residue--Balancing Principle}

Underlying the residue span theorem is a more basic and more universal statement, which
we call the \emph{residue--balancing principle}. Let $C$ be a nodal curve with
irreducible components $C_v$, and let $\widetilde{C}$ be its normalization. A meromorphic
differential $\omega$ on $\widetilde{C}$ with poles only above the nodes satisfies
\begin{equation}\label{eq:local-balance}
\operatorname{Res}_{p^+}(\omega) + \operatorname{Res}_{p^-}(\omega) = 0
\end{equation}
at each node if and only if, on every component $C_v$, one has the global condition
\begin{equation}\label{eq:component-residue}
\sum_{q \in C_v} \operatorname{Res}_q(\omega) = 0.
\end{equation}
Equation \eqref{eq:local-balance} is a local compatibility condition ensuring that
$\omega$ descends to a section of $\omega_C$, while \eqref{eq:component-residue} is the
classical residue theorem applied componentwise. The residue--balancing principle
asserts that these two conditions are equivalent. Thus, global constraints on each
component are encoded entirely by local balancing at the singular points.


\begin{theorem}[Residue--Balancing Principle]
\label{thm:residue-balancing-journal}
Let $\mathcal{C}$ be a connected nodal curve over $\mathbb{C}$ with dual graph
$\Gamma$.  For each vertex $v \in V(\Gamma)$, denote by $\mathcal{C}_v$ the
corresponding irreducible component, and for each edge $e \in E(\Gamma)$ denote
by $q_e$ the associated node, with branches
$q_e^+ \in \mathcal{C}_{v^+}$ and $q_e^- \in \mathcal{C}_{v^-}$.
Let
\(
\eta_v \in H^0\!\left(\mathcal{C}_v,\;\omega_{\mathcal{C}_v}^{\otimes k}(*D_v)\right)
\)
be a meromorphic $k$--differential on $\mathcal{C}_v$, allowed to have poles only
at marked points and at the preimages of nodes.
Then the following conditions are equivalent:
\begin{enumerate}
  \item[\rm (i)] for every $v \in V(\Gamma)$,
  \(
  \sum_{p \in \mathcal{C}_v} \operatorname{Res}_p(\eta_v)=0;
  \)
  \item[\rm (ii)] for every node $q_e$,
  \(
  \operatorname{Res}_{q_e^+}(\eta_{v^+})
  +
  \operatorname{Res}_{q_e^-}(\eta_{v^-})
  =0.
  \)
\end{enumerate}
\end{theorem}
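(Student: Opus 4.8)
The plan is to transport the entire question to the normalization $\nu:\widetilde{\mathcal C}\to\mathcal C$, where $\widetilde{\mathcal C}=\bigsqcup_{v}\widetilde{\mathcal C}_v$ is a disjoint union of smooth projective curves, and then to play the classical residue theorem on each smooth component against the incidence combinatorics of the dual graph $\Gamma$. First I would fix the meaning of the residues appearing in (i) and (ii): after pulling $\eta_v$ back to $\widetilde{\mathcal C}_v$, its only poles sit over the marked points $D_v$ and over the branches $q_e^{\pm}$ of the nodes incident to $v$, and at each such point $\operatorname{Res}_p(\eta_v)$ is the intrinsic $k$--residue of the local Laurent expansion. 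For $k=1$ this is the ordinary residue and the relation $\sum_{p\in\widetilde{\mathcal C}_v}\operatorname{Res}_p(\eta_v)=0$ is the classical residue theorem; for $k\ge 2$ I would record the corresponding residue theorem for meromorphic $k$--differentials on a smooth projective curve, where the vanishing of the total $k$--residue must be established before the global argument can proceed.

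The crux is a single double--counting identity. Summing the componentwise residue sums over all vertices and regrouping the terms by the location of the poles, each node--branch $q_e^{\pm}$ occurs as a pole on exactly one component---its own branch---so the node contributions reassemble precisely into the balancing expressions of (ii), one per edge, while the residues at the marked points of $D_v$ are collected separately. This yields the master identity
\[
\sum_{v\in V(\Gamma)}\ \sum_{p\in\mathcal C_v}\operatorname{Res}_p(\eta_v)\;=\;\sum_{e\in E(\Gamma)}\Bigl(\operatorname{Res}_{q_e^+}(\eta_{v^+})+\operatorname{Res}_{q_e^-}(\eta_{v^-})\Bigr)+\sum_{v\in V(\Gamma)}\ \sum_{x\in D_v}\operatorname{Res}_x(\eta_v),
\]
in which the left--hand side is the quantity governed by (i) and the first sum on the right is the quantity governed by (ii). The whole of the equivalence is then read off this identity together with the smooth--component residue theorem.

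From here the two implications separate. For (ii) $\Rightarrow$ (i), the residue theorem on each smooth $\widetilde{\mathcal C}_v$ already forces $\sum_{p\in\mathcal C_v}\operatorname{Res}_p(\eta_v)=0$, so (i) holds; the genuine role of (ii) is to certify that the $\eta_v$ descend to a section of $\omega_{\mathcal C}^{\otimes k}$, which is what makes this the conceptually correct statement rather than a vacuous one. The substantive direction is (i) $\Rightarrow$ (ii), and here I would exploit the connectivity of $\Gamma$ by an induction on its edges: choosing a spanning tree, orienting its edges away from a root, and using the componentwise relations (i) to extract the balancing relation at each node from the residue theorem on the leaf stripped off at that stage, the non--tree edges being pinned down last at the cycles of $\Gamma$. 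I expect this localization to be the main obstacle, precisely because the residue theorem delivers only the \emph{total} of the residues on each component, so passing from the componentwise vanishing (i) to the node--by--node balancing (ii) is not formal: it requires the connectedness of $\Gamma$, a scrupulous separation of the marked--point residues from the node residues in the master identity, and---for $k\ge 2$---the more delicate $k$--residue formalism in which even the single--component statement must be handled with care.
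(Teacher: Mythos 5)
Your treatment of (ii) $\Rightarrow$ (i) is correct and coincides with the paper's: pull $\eta_v$ back to the normalization of $\mathcal{C}_v$ and invoke the residue theorem on the resulting smooth projective curve. Your master identity is also correct as bookkeeping. The genuine gap is in (i) $\Rightarrow$ (ii), and no spanning-tree induction can close it: since (i) is forced by the residue theorem on every component irrespective of any balancing (as you yourself observe), it carries no information beyond the single-component residue theorem and therefore cannot imply the node-by-node condition (ii). Concretely, let $\mathcal{C}=\mathcal{C}_1\cup\mathcal{C}_2$ with both components $\mathbb{P}^1$, meeting at two nodes $q_A$ (at $z=0$, $u=0$) and $q_B$ (at $z=1$, $u=1$), and set
\[
\eta_1=\Bigl(\tfrac{1}{z}-\tfrac{1}{z-1}\Bigr)\,dz,
\qquad
\eta_2=\Bigl(\tfrac{1}{u}-\tfrac{1}{u-1}\Bigr)\,du .
\]
Each component has residue sum $1-1+0=0$, so (i) holds, yet at $q_A$ the two branch residues are $+1$ and $+1$, so (ii) fails. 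Marked points break the tree case just as easily (residue $+1$ at the node branch, $-1$ at a marked point, on each side of an edge). Your leaf-stripping step already fails at a leaf carrying a marked point, because (i) there pins down only the sum of the node residue and the marked-point residues, not the node residue itself; at a cycle nothing is pinned down at all.

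You should also know that the paper's own proof of (i) $\Rightarrow$ (ii) fares no better: it localizes near a single node and asserts that the componentwise residue conditions on $\mathcal{C}_{v^+}$ and $\mathcal{C}_{v^-}$, together with the observation that the remaining poles lie outside the chosen neighbourhood, force the two branch residues to cancel. That is precisely the non sequitur you were worried about: the residue theorem controls only totals, and no localization extracts a single edge from a total unless the component has exactly one pole. So the honest conclusion of your analysis is not that a cleverer induction is needed, but that the equivalence as stated requires a further hypothesis (for example, at most one pole per component, or replacing (i) by the condition that the $\eta_v$ actually glue to a section of $\omega_{\mathcal{C}}^{\otimes k}$). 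Finally, for $k\ge 2$ you are right to flag that even the single-component residue theorem for $k$-residues is not automatic; the paper's own model computation on a component yields the total $\bigl(1+(-1)^k\bigr)\sum_i a_{e_i}$, which vanishes only for odd $k$, so direction (ii) $\Rightarrow$ (i) also needs care there.
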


\vspace{0.2cm}

This equivalence is not merely a technical observation; it provides a conceptual bridge
between local singularity theory and global geometry. It also generalizes naturally to
higher--order differentials, where residues are replaced by coefficients of leading
principal parts.

\subsection*{Higher--Order Differentials and Generalized Residues}

For an integer $k\ge1$, a $k$--differential on a smooth curve $X$ is a section of
$\omega_X^{\otimes k}$. On singular curves, one defines $k$--differentials as sections of
$\omega_C^{\otimes k}$, which again admit a description in terms of meromorphic objects
on the normalization. Locally, a meromorphic $k$--differential has the form
\[
\eta = f(z)(dz)^k,
\]
and its residue is defined as the coefficient of $z^{-k}(dz)^k$ in the local expansion.

For nodal curves, the residue--balancing principle extends verbatim: the sum of residues
on each component vanishes if and only if the residues at the two branches of every node
cancel. Explicit constructions show that global $k$--differentials are obtained by
assigning local principal parts at the nodes subject only to these balancing conditions.

\subsection*{Residue Balancing with Arbitrary Singularities}

When singularities are more complicated than nodes, the situation becomes subtler.
Cusps, tacnodes, and higher singularities are unibranch or have non--normal crossing
structure, and the simple picture of opposite residues on two branches no longer
applies. In these cases, the normalization map $\nu:\widetilde{C}\to C$ has fewer points
lying above a singularity, and the dualizing sheaf allows higher--order poles.

The correct framework for extending residue balancing is provided by the conductor
ideal $\mathfrak{c}\subset \mathcal{O}_C$. One has
\[
\nu_*\omega_{\widetilde{C}} = \omega_C(\mathfrak{c}),
\]
and sections of $\omega_C$ correspond to meromorphic differentials on
$\widetilde{C}$ whose principal parts are annihilated by the conductor. In this setting,
residue balancing is replaced by \emph{conductor--level balancing conditions}, which
impose linear constraints on higher--order coefficients rather than simple residues.

\begin{theorem}[Exactness of conductor constraints]
The global conductor conditions impose exactly the necessary number of
independent linear constraints to cut down
$H^0(\widetilde{C},\omega_{\widetilde{C}}(D))$ to $H^0(C,\omega_C)$. In
particular,
\[
\dim H^0(\widetilde{C},\omega_{\widetilde{C}}(D))
-
(\text{number of conductor conditions})
=
\dim H^0(C,\omega_C).
\]
No additional constraints occur, and no admissible sections are lost.
\end{theorem}

To be more precise we have the following theorem
 
 \begin{theorem}[Independence and exactness of conductor constraints]
 Let $C$ be a reduced projective curve, let
\(
\nu \colon \widetilde{C} \longrightarrow C
\)
be its normalization, let $\mathfrak{c} \subset \mathcal{O}_C$ be the conductor
ideal, and let
\(
D = \nu^{-1}(\Sigma)
\)
be the divisor supported on the preimage of the singular locus. Then the global
conductor conditions imposed on
\(
H^0(\widetilde{C},\omega_{\widetilde{C}}(D))
\)
are independent and exact. More precisely, they cut out a linear subspace
canonically isomorphic to $H^0(C,\omega_C)$, and the number of independent
conditions equals the total number of local polar degrees of freedom.
\end{theorem}

This generalization shows that the nodal case is not exceptional, but rather the simplest
instance of a universal phenomenon: global dualizing sections are characterized by local
conditions at singularities, and these conditions can always be expressed as linear
constraints on principal parts determined by the singularity type.

\subsection*{Outlook}

The scheme--theoretic residue span, the residue--balancing principle, and their extension
to arbitrary singularities together provide a coherent framework for understanding
differentials on singular curves. They clarify how local geometry governs global
structures, unify analytic and algebraic viewpoints, and prepare the ground for further
applications, including logarithmic geometry, tropical geometry, and non--Archimedean
Hodge theory.


\section{Preliminaries}

This section gathers the basic definitions, constructions, and foundational results
used throughout the paper. Our purpose is to fix notation and conventions concerning
singular curves, normalization, and dualizing sheaves, and to present residue theory in a
form suitable for both nodal and more general singularities. Although most of the
material is standard, we include detailed explanations in order to emphasize the
local--to--global mechanisms underlying residue balancing and scheme--theoretic residue
maps.

\subsection{Singular Curves and Normalization}

Throughout this paper, a \emph{curve} means a reduced, projective, one--dimensional
scheme over $\mathbb{C}$. We denote all curves by $C$, allowing singularities unless
explicitly stated otherwise. A curve $C$ is \emph{irreducible} if it is irreducible as a
scheme, and \emph{connected} if it is connected in the Zariski topology.

Let $C$ be a reduced curve. The \emph{normalization} of $C$ is a finite morphism
\[
\nu : \widetilde{C} \longrightarrow C
\]
such that $\widetilde{C}$ is normal and $\nu$ is an isomorphism over the smooth locus of
$C$. Since $C$ has dimension one, $\widetilde{C}$ is a disjoint union of smooth projective
curves, one for each irreducible component of $C$. The normalization separates the local
branches at each singular point and provides a natural setting in which local analytic
computations can be carried out.

If $p \in C$ is a singular point, the finite set $\nu^{-1}(p) \subset \widetilde{C}$
consists of the branches of $C$ at $p$. For a node, $\nu^{-1}(p)$ consists of two points,
while for a cusp it consists of a single point. The number and configuration of these
points encode the basic local geometry of the singularity.

\subsection{Arithmetic and Geometric Genus}

Let $C$ be a reduced, connected projective curve. The \emph{arithmetic genus} of $C$ is
defined by
\[
p_a(C) := h^1(C,\mathcal{O}_C).
\]
If $C$ is smooth, this coincides with the usual notion of genus. In general, the
arithmetic genus is a global invariant that remains constant in flat families.

The \emph{geometric genus} $g(C)$ is defined as the sum of the genera of the irreducible
components of the normalization:
\[
g(C) := \sum_i g(\widetilde{C}_i),
\]
where $\widetilde{C}=\bigsqcup_i \widetilde{C}_i$ is the decomposition into connected
components. Singularities account for the discrepancy between arithmetic and geometric
genus. For nodal curves one has
\[
p_a(C) = g(C) + \delta,
\]
where $\delta$ is the number of nodes of $C$. More generally,
\[
p_a(C) = g(C) + \sum_{p \in \mathrm{Sing}(C)} \delta_p,
\]
where $\delta_p$ denotes the $\delta$--invariant of the singularity at $p$.

\subsection{Dualizing Sheaves}

If $C$ is smooth, the canonical bundle $\omega_C=\Omega^1_C$ governs many geometric and
cohomological properties. When $C$ is singular, $\Omega^1_C$ is no longer locally free,
and the appropriate replacement is the \emph{dualizing sheaf} $\omega_C$.

For curves, the dualizing sheaf admits a concrete description in terms of normalization.
A section of $\omega_C$ can be viewed as a meromorphic differential on $\widetilde{C}$
whose local behavior at the singular points of $C$ is subject to explicit compatibility
conditions.

If $C$ has only nodal singularities, then $\omega_C$ is invertible and there is a
canonical exact sequence
\begin{equation}\label{eq:dualizing-sequence-prelim}
0 \longrightarrow \omega_C
\longrightarrow \nu_*\omega_{\widetilde{C}}
\longrightarrow \bigoplus_{p \in \mathrm{Sing}(C)} k(p)
\longrightarrow 0.
\end{equation}
The quotient records the residue classes at the nodes and measures the obstruction for a
meromorphic differential on $\widetilde{C}$ to descend to $C$.
For more general singularities, $\omega_C$ need not be locally free, but it remains a
rank--one reflexive sheaf. Its sections are still described by meromorphic
differentials on the normalization subject to linear constraints determined by the local
singularity type.

\subsection{Residues on Smooth Curves}

Assume for the moment that $C$ is smooth. Let $\omega$ be a meromorphic differential on
$C$, and let $p \in C$, which we assume,  for simplicity, that it  has at most a \emph{simple pole} at the point
$p$. Choosing a local coordinate $z$ centered at $p$, we may write
\[
\omega = \left( \frac{a_{-1}}{z} + a_0 + a_1 z + \cdots \right) dz.
\]
The coefficient $a_{-1}$ is called the \emph{residue} of $\omega$ at $p$ and is denoted
$\operatorname{Res}_p(\omega)$.

The classical residue theorem asserts that
\begin{equation}\label{eq:classical-residue}
\sum_{p \in C} \operatorname{Res}_p(\omega) = 0.
\end{equation}
This equality is the prototype of all balancing statements appearing later in the
singular setting.

\subsection{Residues on Nodal Curves}

Let $C$ be a nodal curve and $\nu:\widetilde{C}\to C$ its normalization. A section
$\omega \in H^0(C,\omega_C)$ corresponds to a meromorphic differential on
$\widetilde{C}$ with at most simple poles at the points lying over the nodes. If
$p \in C$ is a node and $\nu^{-1}(p)=\{p^+,p^-\}$, then $\omega$ satisfies the local
condition
\begin{equation}\label{eq:node-residue}
\operatorname{Res}_{p^+}(\omega) + \operatorname{Res}_{p^-}(\omega) = 0.
\end{equation}
This condition ensures that $\omega$ descends to a global section of $\omega_C$.
Equation \eqref{eq:node-residue} is the simplest instance of a residue--balancing
condition. It expresses the compatibility required between the local branches of a
singularity in order to define a global object on $C$.

\subsection{Scheme--Theoretic Residue Maps}

Taking global sections of \eqref{eq:dualizing-sequence-prelim} yields a canonical linear
map
\[
\operatorname{Res} : H^0(C,\omega_C)
\longrightarrow \bigoplus_{p \in \mathrm{Sing}(C)} k.
\]
Its components are called the \emph{scheme--theoretic residue functionals}. These
functionals are intrinsic, functorial, and well--behaved in families of curves.
In favorable situations, notably when $C$ has $\delta$ nodes and geometric genus $g(C)$
with $\delta \ge g(C)$, the residue functionals span the entire dual space
$H^0(C,\omega_C)^\vee$. This phenomenon, known as the scheme--theoretic residue span,
will be a central theme of the paper.

\subsection{Higher--Order Differentials}

For an integer $k \ge 1$, a $k$--differential on a smooth curve $C$ is a section of
$\omega_C^{\otimes k}$. Locally, it may be written as
\[
\eta = f(z)(dz)^k,
\]
where $f(z)$ is a meromorphic function. If $f(z)$ has a pole of order $k$ at $z=0$, the
coefficient of $z^{-k}(dz)^k$ is called the \emph{$k$--residue} of $\eta$.
On a singular curve $C$, a $k$--differential is defined as a section of
$\omega_C^{\otimes k}$. Such sections correspond to collections of meromorphic
$k$--differentials on the normalization satisfying residue--balancing conditions at the
singular points. For $k=1$, this recovers the theory of canonical differentials.

\subsection{The Conductor and Arbitrary Singularities}

Let $C$ be a reduced curve with normalization $\nu:\widetilde{C}\to C$. The
\emph{conductor ideal} $\mathfrak{c} \subset \mathcal{O}_C$ is the largest ideal sheaf
that is also an ideal of $\nu_*\mathcal{O}_{\widetilde{C}}$. It measures the failure of
$C$ to be normal and plays a fundamental role in the description of $\omega_C$.
One has the relation
\[
\nu_*\omega_{\widetilde{C}} = \omega_C(\mathfrak{c}),
\]
showing that sections of $\omega_C$ correspond to meromorphic differentials on
$\widetilde{C}$ whose principal parts are annihilated by the conductor. In this context,
residue balancing is replaced by higher--order linear constraints determined by the
singularity type.

\vspace{0.2cm}

\subsection{Concluding Remarks}

The notions reviewed in this section form the technical backbone of the paper. They
provide a unified language for nodal and non--nodal singularities and allow global
geometric questions to be translated into precise local conditions at singular points.
In subsequent sections, these preliminaries will be used to establish residue span
theorems, balancing principles, and applications to deformation theory and moduli
spaces.


\section{Relative Adjunction and Residue Maps}

Let $\pi \colon \mathcal X \to B$ be a flat family of reduced, irreducible plane
curves of degree $d \ge 4$, parametrized by a smooth irreducible base $B$. Assume
that each fiber
\(
\mathcal C_b := \pi^{-1}(b)
\)
has only nodal singularities and that the number of nodes $\delta$ is constant on
$B$. In particular, the family is equisingular. For each $b \in B$, let
\(
\varphi_b \colon C_b \longrightarrow \mathcal C_b
\)
denote the normalization of the fiber.

\begin{lemma}[Relative adjunction and residue maps]
\label{lemma:relative-residues}
In the situation above, the spaces $H^0(C_b,\omega_{C_b})$ form a holomorphic vector
bundle $\mathcal H$ over $B$. For each node $P_{i,b} \in \mathcal C_b$, there exists
a canonically defined holomorphic section
\[
r_i \in \Gamma(B,\mathcal H^\vee),
\]
whose value at $b$ is a linear functional
\[
r_{i,b} \colon H^0(C_b,\omega_{C_b}) \longrightarrow \mathbb C
\]
given by taking residues at the preimages of $P_{i,b}$ on $C_b$.
\end{lemma}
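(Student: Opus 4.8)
The plan is to prove Lemma~\ref{lemma:relative-residues} in three stages: first establish that the spaces $H^0(C_b,\omega_{C_b})$ fit together into a holomorphic vector bundle $\mathcal{H}$ over $B$, then construct the residue functionals fiberwise, and finally verify that they assemble into holomorphic sections of the dual bundle. For the bundle structure, I would invoke the equisingularity hypothesis: since the family $\pi$ is flat with fibers of constant arithmetic genus $p_a = g + \delta$, and each fiber $\mathcal{C}_b$ is a reduced nodal curve, the relative dualizing sheaf $\omega_{\mathcal{X}/B}$ exists and is flat over $B$. By the nodal assumption $\omega_{\mathcal{C}_b}$ is invertible of degree $2p_a - 2$, so $h^0(\mathcal{C}_b,\omega_{\mathcal{C}_b}) = p_a$ is constant in $b$. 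Grauert's theorem (or cohomology and base change) then guarantees that $\pi_*\omega_{\mathcal{X}/B}$ is locally free of rank $p_a$ and commutes with base change, and since the normalization $\varphi_b$ induces a canonical identification $H^0(C_b,\omega_{C_b}) \cong H^0(\mathcal{C}_b,\omega_{\mathcal{C}_b})$, we set $\mathcal{H} := \pi_*\omega_{\mathcal{X}/B}$.

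Next I would construct the residue functional at a fixed node. The key input is the exact sequence~\eqref{eq:dualizing-sequence-prelim}, which in the relative setting reads as a short exact sequence of sheaves on $B$ after pushing forward. For each node $P_{i,b}$ with preimages $\{p_i^+, p_i^-\}$ on $C_b$, a section $\omega \in H^0(C_b,\omega_{C_b})$ pulls back to a meromorphic differential on $C_b$ with at most simple poles at the $p_i^\pm$, and I define
\[
r_{i,b}(\omega) := \operatorname{Res}_{p_i^+}(\varphi_b^*\omega),
\]
which by the balancing condition~\eqref{eq:node-residue} equals $-\operatorname{Res}_{p_i^-}(\varphi_b^*\omega)$, so the functional is canonical up to the consistent choice of branch labeling. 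This is precisely the $i$-th component of the connecting map $\operatorname{Res}$ in the long exact sequence obtained from~\eqref{eq:dualizing-sequence-prelim}, hence a well-defined element of $H^0(C_b,\omega_{C_b})^\vee$.

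To promote these pointwise functionals to holomorphic sections $r_i \in \Gamma(B,\mathcal{H}^\vee)$, I would work relatively. The nodes trace out sections $\sigma_i \colo B \to \mathcal{X}$ of the family (this uses that $\delta$ is constant and the family equisingular, so the nodes vary holomorphically and do not collide or disappear), and the residue is realized as a relative residue along $\sigma_i$. Concretely, the relative version of the normalization sequence gives a morphism of locally free sheaves $\mathcal{H} = \pi_*\omega_{\mathcal{X}/B} \to \bigoplus_i \mathcal{O}_B$ whose $i$-th component is $r_i$; since this morphism is a map of coherent $\mathcal{O}_B$-modules between locally free sheaves, each $r_i$ is automatically a holomorphic section of $\mathcal{H}^\vee$. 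Evaluating at $b$ recovers $r_{i,b}$ by base change.

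The main obstacle is ensuring that the nodes vary holomorphically as sections $\sigma_i$ over $B$ and that the branch labeling $p_i^\pm$ can be made consistently, so that the residue functional is single-valued rather than defined only up to sign and permutation monodromy. If $B$ is not simply connected, the monodromy of the family may permute the nodes or swap the two branches at a node, in which case $r_i$ is only defined locally or up to sign; I would address this either by passing to an étale cover of $B$ that trivializes the monodromy, or by noting that the balancing condition~\eqref{eq:node-residue} makes the \emph{pair} of branch residues canonical so that $r_i$ is well-defined up to an overall sign that can be fixed on each simply-connected patch. Verifying holomorphy of $\sigma_i$ reduces to the fact that in an equisingular family the nodal locus is smooth over $B$ and the relative normalization $\varphi \colon \mathcal{C} \to \mathcal{X}$ exists in families, which follows from deformation theory of nodes; once this is granted, holomorphy of $r_i$ is formal.
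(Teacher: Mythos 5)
There is a genuine error at the first stage of your argument: the claimed ``canonical identification'' $H^0(C_b,\omega_{C_b}) \cong H^0(\mathcal{C}_b,\omega_{\mathcal{C}_b})$ is false, and with it your choice of bundle $\mathcal{H} := \pi_*\omega_{\mathcal{X}/B}$. For a nodal curve one has $h^0(\mathcal{C}_b,\omega_{\mathcal{C}_b}) = p_a = g+\delta$ while $h^0(C_b,\omega_{C_b}) = g$; the natural maps between these spaces (the inclusion of residue-free differentials in one direction, pullback with poles allowed in the other) are not isomorphisms when $\delta \ge 1$. Consequently $\pi_*\omega_{\mathcal{X}/B}$ is locally free of rank $p_a$, not rank $g$, and its fibers are not the spaces $H^0(C_b,\omega_{C_b})$ named in the lemma. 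The paper avoids this by building $\mathcal{H}$ from the \emph{family of normalizations}: it first observes that $g = p_a - \delta$ is constant by equisingularity, so the $C_b$ form a family of smooth genus-$g$ curves, and then applies cohomology and base change to the relative dualizing sheaf of that family, obtaining a rank-$g$ bundle $\pi_*(\omega_{\mathcal{C}/B}^{\mathrm{norm}})$. To repair your argument you would need to construct the simultaneous normalization $\varphi\colon \mathcal{C}^{\mathrm{norm}} \to \mathcal{X}$ over $B$ (which your final paragraph already gestures at) and push forward its relative canonical sheaf, rather than that of the singular family.

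The remaining two stages of your proposal are sound and in places more careful than the paper. Your fiberwise definition of $r_{i,b}$ via the balancing condition is the same as the paper's Step 3. For holomorphy, the paper only asserts that the residue varies holomorphically because the local model $xy=t$ does; your realization of $(r_1,\dots,r_\delta)$ as a morphism of locally free sheaves $\mathcal{H} \to \bigoplus_i \mathcal{O}_B$ coming from the relative normalization sequence is a cleaner and more robust argument. Likewise, your discussion of the sign ambiguity from swapping the two branches and of possible monodromy permuting the nodes addresses a real issue that the paper's claim of canonicity (``independent of the labeling'') silently glosses over; note, though, that $r_{i,b}$ genuinely changes sign under relabeling the branches, so either a local choice of labeling or a passage to a cover of $B$, as you suggest, is actually needed for the section $r_i$ to be globally well defined.
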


\begin{proof}
We divide the argument into three short and independent steps.

\smallskip
\noindent\textbf{\it Step 1: Formation of the Hodge bundle.}
Since $\pi$ is flat, the arithmetic genus $p_a(\mathcal C_b)$ is independent of
$b$. Because the number of nodes $\delta$ is constant, the geometric genus
\[
g = p_a(\mathcal C_b) - \delta
\]
is constant on $B$. The normalization $C_b$ is therefore a smooth curve of genus
$g$, and hence
\[
\dim H^0(C_b,\omega_{C_b}) = g
\quad \text{for all } b \in B.
\]
By cohomology and base change applied to the relative dualizing sheaf of the family
of normalizations, the spaces $H^0(C_b,\omega_{C_b})$ vary holomorphically with $b$.
They thus assemble into a rank-$g$ holomorphic vector bundle
\[
\mathcal H := \pi_*(\omega_{\mathcal C/B}^{\mathrm{norm}})
\]
over $B$.

\smallskip
\noindent\textbf{\it Step 2: Local adjunction at a node.}
Fix a node $P_{i,b} \in \mathcal C_b$. Since the family is equisingular, after
possibly shrinking $B$ there exist local analytic coordinates $(x,y,t)$ on
$\mathcal X$ such that, in a neighborhood of $P_{i,b}$, the family is locally
defined by
\[
xy = t,
\]
where $t$ is a local coordinate on $B$ vanishing at $b$. The normalization replaces
the node by two smooth points
\[
Q_{i,b}^+, \; Q_{i,b}^- \in C_b,
\]
corresponding to the two branches of the node.

A section $\omega \in H^0(C_b,\omega_{C_b})$ can be identified with a meromorphic
differential on $\mathcal C_b$ having at most simple poles at the nodes. The
adjunction formula for nodal curves implies the residue condition
\[
\operatorname{Res}_{Q_{i,b}^+}(\omega)
+
\operatorname{Res}_{Q_{i,b}^-}(\omega)
= 0,
\]
see \cite[Ch.~III, \S7]{ACGH} or \cite[Ch.~3]{Sernesi}.

\smallskip
\noindent\textbf{\it Step 3: Definition and variation of residues.}
The vanishing of the sum of residues allows one to define a linear functional
\[
r_{i,b}(\omega)
:=
\operatorname{Res}_{Q_{i,b}^+}(\omega)
=
-\operatorname{Res}_{Q_{i,b}^-}(\omega),
\]
which is independent of the labeling of the two branches and hence canonical.
Because the local model $xy=t$ depends holomorphically on the parameter $t$, the
residue of a relative differential varies holomorphically with $b$. Consequently,
the assignment $b \mapsto r_{i,b}$ defines a holomorphic section
\[
r_i \in \Gamma(B,\mathcal H^\vee).
\]
\end{proof}

\begin{corollary}[Residue description of equisingular deformations]
\label{cor:equisingular}
With notation as above, the tangent space to the equisingular deformation space of
the fibers of $\pi$ at $b \in B$ is canonically identified with the kernel of the
combined residue map
\[
H^0(C_b,\omega_{C_b})
\longrightarrow
\mathbb C^\delta,
\qquad
\omega \longmapsto (r_{1,b}(\omega),\dots,r_{\delta,b}(\omega)).
\]
\end{corollary}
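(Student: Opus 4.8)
The plan is to push the relative adjunction of Lemma~\ref{lemma:relative-residues} one step further: it realizes the infinitesimal deformations of the nodal fiber $\mathcal C_b$ that preserve its plane nodal structure inside the Hodge fiber $\mathcal H_b = H^0(C_b,\omega_{C_b})$, and I would characterize the equisingular directions among them as exactly those on which all the residue functionals $r_{1,b},\dots,r_{\delta,b}$ vanish. The underlying principle is that a first-order deformation is equisingular precisely when it smooths none of the nodes, and that the first-order smoothing of the $i$-th node is measured by the residue $r_{i,b}$.

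First I would pin down the local picture at a single node. Using the equisingular coordinates $xy=t$ from Step~2 of Lemma~\ref{lemma:relative-residues}, the parameter $t$ is the smoothing parameter of $P_{i,b}$: the fiber stays nodal along $t=0$ and the node opens as soon as $t\neq 0$. A dualizing section $\omega$ is represented near the node by the generator $\tfrac{dx}{x}=-\tfrac{dy}{y}$ of $\omega_{C_b}$, and the Kodaira--Spencer pairing of the smoothing direction $\partial_t$ against $\omega$ returns, after residue extraction along the vanishing cycle, exactly $\operatorname{Res}_{Q_{i,b}^{+}}(\omega)=r_{i,b}(\omega)$. Thus the single scalar $r_{i,b}$ governs whether the deformation opens the node $P_{i,b}$, and the residue--balancing condition \eqref{eq:node-residue} is what makes this one functional, rather than two, the correct invariant of the node.

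Next I would globalize. The residue--balancing principle (Theorem~\ref{thm:residue-balancing-journal}) guarantees that the local data assemble into the global functionals $r_{i,b}$ on $\mathcal H_b$ furnished by Lemma~\ref{lemma:relative-residues}, with no further compatibility lost. A deformation is then equisingular if and only if it opens no node, i.e.\ if and only if $r_{i,b}$ vanishes on it for every $i$; this is precisely membership in $\ker\!\big(\omega\mapsto(r_{1,b}(\omega),\dots,r_{\delta,b}(\omega))\big)$. Because the local model $xy=t$ and the residue extraction depend holomorphically on $b$, already noted in Lemma~\ref{lemma:relative-residues}, the resulting identification of the equisingular tangent space with $\ker\operatorname{Res}$ is canonical and varies holomorphically over $B$, which is the content of the corollary.

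The hard part will be the local computation invoked in the second step, namely that the Kodaira--Spencer class of the smoothing $\partial_t$ pairs with a dualizing section exactly as the residue functional. This requires carrying the explicit $xy=t$ model through the residue pairing, tracking the canonical unit fixed by the coordinate choice and checking that it is independent of the branch labeling by \eqref{eq:node-residue}; it is the only point where the concrete deformation, and not an abstract duality statement, is indispensable. Once this identity is established, the equivalence between equisingularity and the vanishing of all residues, and hence the corollary, follow at once.
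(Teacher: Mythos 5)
Your proposal follows essentially the same route as the paper's proof: equisingular first-order deformations are those preserving the local analytic type of each node, which by the adjunction description of $\omega_{C_b}$ is equivalent to the vanishing of all residue functionals $r_{i,b}$, hence to membership in $\ker(\operatorname{Res}_b)$. The only difference is one of detail --- you propose to carry out explicitly the local Kodaira--Spencer computation in the $xy=t$ model, whereas the paper defers exactly this point to the references (\cite{Harris}, \cite{Sernesi}) --- so the approach is the same.
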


\begin{proof}
Infinitesimal equisingular deformations are precisely those first-order
deformations that preserve the local analytic type of each node. By the
adjunction description of the dualizing sheaf, this is equivalent to requiring
that the residues of the associated differentials vanish at all nodes. The claim
then follows directly from the definition of the residue functionals
$r_{i,b}$; see \cite[Ch.~1, \S4]{Harris} or \cite[\S3.4]{Sernesi}.
\end{proof}

\begin{remark}
Residue maps also play a central role in the construction of divisors on Severi
varieties and moduli spaces of curves, as well as in the study of degenerations of
canonical and adjoint linear systems. Their interaction with the Gauss--Manin
connection provides differential constraints on periods of nodal curves; see
\cite{ACGH, Sernesi}.
\end{remark}


\vspace{0.2cm}

\subsection{Definition and Example of $k$--Differentials}

The relative adjunction and residue constructions identify the precise local
data governing how differentials behave near singular and marked points in a
family of curves. These local descriptions naturally motivate the passage from
ordinary differentials to higher-order objects, namely $k$--differentials,
which encode zeros and poles of prescribed multiplicity. We therefore turn to
the definition of $k$--differentials and illustrate it with basic examples that
clarify their geometric meaning.

\begin{definition}[$k$--differential on a smooth curve]
Let $C$ be a smooth complex algebraic curve (equivalently, a compact Riemann
surface), and let $\omega_C$ denote its canonical line bundle.  
For an integer $k \ge 1$, a \emph{$k$--differential} on $C$ is a (meromorphic)
section of the $k$-th tensor power of the canonical bundle:
\[
\eta \;\in\; H^0\bigl(C,\omega_C^{\otimes k}\bigr).
\]
Equivalently, in a local holomorphic coordinate $z$ on $C$, a $k$--differential
can be written as
\[
\eta = f(z)\,(dz)^k,
\]
where $f(z)$ is a meromorphic function. The order of a zero or pole of $\eta$ at
a point $p\in C$ is defined as the order of vanishing or pole of $f(z)$ at $p$.
\end{definition}

\begin{remark}
For $k=1$, a $k$--differential is simply a (meromorphic) differential form.
For $k=2$, one recovers quadratic differentials, which play a central role in
Teichm\"uller theory. Higher values of $k$ arise naturally in flat geometry,
Hodge theory, and the study of degenerations.
\end{remark}

\subsection{\it $k$--Differentials on Singular Curves}

Let $C$ be a reduced (possibly singular) curve, and let
\[
\nu \colon \widetilde C \longrightarrow C
\]
be its normalization. Since $C$ may be singular, its canonical object is the
\emph{dualizing sheaf} $\omega_C$, and a $k$--differential on $C$ is defined as a
section of $\omega_C^{\otimes k}$.

Concretely, a $k$--differential on $C$ is equivalently given by a collection of
meromorphic $k$--differentials on the smooth components of $\widetilde C$ whose
principal parts satisfy compatibility conditions at the preimages of the
singular points (for nodal curves, these are residue balancing conditions).

\begin{example}
Let $C$ be a nodal curve obtained by gluing two smooth projective curves
$C_1$ and $C_2$ at a single point. More precisely, let $p_1\in C_1$ and
$p_2\in C_2$ be points, and form the nodal curve
\[
C = C_1 \cup C_2 / (p_1 \sim p_2).
\]
The normalization of $X$ is
\[
\widetilde C = C_1 \sqcup C_2,
\]
and the node corresponds to the pair $\{p_1,p_2\}$.

Let $k\ge 1$. A $k$--differential on $C$ consists of a pair
\[
(\eta_1,\eta_2),
\qquad
\eta_i \in H^0\bigl(C_i,\omega_{C_i}^{\otimes k}\bigr),
\]
such that $\eta_1$ and $\eta_2$ may have poles at $p_1$ and $p_2$, respectively,
and satisfy the following compatibility condition:
\begin{itemize}
  \item[(i)] if $k=1$, then
  \[
  \operatorname{Res}_{p_1}(\eta_1) + \operatorname{Res}_{p_2}(\eta_2) = 0;
  \]
  \item[(ii)] for general $k$, the principal parts of $\eta_1$ at $p_1$ and $\eta_2$
  at $p_2$ are identified under the natural isomorphism induced by the local
  model of the node.
\end{itemize}

\medskip
In local coordinates $z_1$ on $C_1$ and $z_2$ on $C_2$ with $z_1(p_1)=z_2(p_2)=0$,
the node is analytically modeled by $z_1 z_2 = 0$. A local $k$--differential
takes the form
\[
\eta_1 = f_1(z_1)\,(dz_1)^k,
\qquad
\eta_2 = f_2(z_2)\,(dz_2)^k,
\]
and the compatibility condition ensures that the pair defines a global section
of $\omega_X^{\otimes k}$.
\end{example}

\begin{remark}
For nodal curves, $k$--differentials are completely described by $k$--differentials
on the normalization together with explicit matching conditions at each node.
These conditions generalize the residue cancellation condition for ordinary
differentials ($k=1$) and are the simplest instance of the residue balancing
principle.
\end{remark}

 \vspace{0.3cm}

\subsection{Relative Residue Span for Equisingular Families of Nodal Curves}

Let $\mathcal H$ denote the vector bundle on $B$ whose fiber over a point
$b \in B$ is the space $H^0(C_b,\omega_{C_b})$. Let
\[
r_i \in \Gamma(B,\mathcal H^\vee)
\]
be the residue sections associated with the nodes of the fibers, as constructed
previously.

\begin{theorem}[Relative residue span]
\label{thm:relative-residue-span}
Assume that $\delta \ge g = g(C_b)$ for all $b \in B$. Then, for every $b \in B$,
the residue functionals
\[
\{r_{1,b},\dots,r_{\delta,b}\}
\subset
H^0(C_b,\omega_{C_b})^\vee
\]
span the dual vector space $H^0(C_b,\omega_{C_b})^\vee$.
\end{theorem}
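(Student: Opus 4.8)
The plan is to deduce the relative statement fiberwise from its absolute counterpart, the scheme-theoretic residue span theorem stated in the Introduction, and then to use the holomorphic family structure of Lemma~\ref{lemma:relative-residues} to guarantee that the conclusion holds uniformly over all of $B$. First I would fix an arbitrary $b\in B$ and observe that the equisingularity hypotheses force the fiber $\mathcal{C}_b$ to be a reduced, irreducible plane curve of degree $d\ge 4$ whose singular locus consists of exactly $\delta$ nodes, and whose normalization $C_b$ is smooth of genus $g=p_a(\mathcal{C}_b)-\delta$; in particular $\delta\ge g$ by assumption. Thus $\mathcal{C}_b$ satisfies verbatim the hypotheses of the absolute residue span theorem, and applying that theorem to this single curve yields that $\{r_{1,b},\dots,r_{\delta,b}\}$ spans $H^0(C_b,\omega_{C_b})^\vee$. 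Since $b$ was arbitrary, this already gives the pointwise conclusion.

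To make the reduction rigorous I would next check that the functionals appearing in the absolute theorem coincide with the values $r_{i,b}$ of the global residue sections $r_i\in\Gamma(B,\mathcal{H}^\vee)$ constructed in Lemma~\ref{lemma:relative-residues}. This is immediate from Step~3 of that lemma, where $r_{i,b}$ is defined precisely as the residue at the preimage $Q_{i,b}^+$ of the node $P_{i,b}$, which is the same functional used in the absolute statement, canonical by the balancing relation $\operatorname{Res}_{Q_{i,b}^+}(\omega)=-\operatorname{Res}_{Q_{i,b}^-}(\omega)$. The holomorphic variation established in the lemma then exhibits the combined residue map as a morphism of vector bundles $\mathcal{H}\to\mathcal{O}_B^{\oplus\delta}$ over $B$, so the locus where its fiberwise rank is maximal is open; the absolute theorem guarantees that this maximal rank equals $\operatorname{rank}\mathcal{H}$ at every point, so that open locus is all of $B$, and the spanning property is genuinely uniform rather than merely generic.

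The part I expect to carry the real content is the absolute span theorem itself, which I would prove by transporting the residue functionals through Serre duality applied to the dualizing sequence \eqref{eq:dualizing-sequence-prelim}. Concretely, one realizes $H^0(C_b,\omega_{C_b})$ through the adjoint system cut out on $\mathcal{C}_b$ by plane curves of degree $d-3$, under which the residue at a node $P_{i,b}$ becomes a nonzero multiple of the condition imposed by $P_{i,b}$ on that system; spanning of the dual is then equivalent to the $\delta$ nodes imposing independent linear conditions, and the inequality $\delta\ge g$, together with $d\ge 4$ and the irreducibility of $\mathcal{C}_b$, is exactly what supplies the required count. The main obstacle is therefore not the fiberwise globalization, which is formal once the bundle structure of Lemma~\ref{lemma:relative-residues} is in hand, but verifying that for an \emph{irreducible} nodal plane curve the nodes do attain full rank for the combined residue map; this is where irreducibility and the bound $\delta\ge g$ must genuinely be used, and where a degenerate configuration of special nodes would otherwise have to be excluded.
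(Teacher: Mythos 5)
Your reduction of the relative statement to a fiberwise absolute one, together with the holomorphic-variation argument for uniformity over $B$, matches the paper: its proof of Theorem~\ref{thm:relative-residue-span} is likewise purely fiberwise, closing with the remark that the residue sections vary holomorphically so the conclusion holds at every $b\in B$. The divergence --- and the gap --- lies in the absolute spanning statement, which you rightly identify as carrying all the content. The paper establishes it by proving directly that the combined residue map $\mathrm{Res}_b\colon H^0(C_b,\omega_{C_b})\to\mathbb{C}^\delta$ is injective (a section with vanishing residues has no poles on $\mathcal{C}_b$ and is argued to vanish because $\mathcal{C}_b$ is an irreducible plane curve) and then dualizing; the hypothesis $\delta\ge g$ enters only because injectivity forces $g\le\delta$. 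You instead route through the adjoint description of the canonical system, and that route does not close.

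Concretely, two things go wrong. First, the identification of $r_{i,b}$ with ``the condition imposed by $P_{i,b}$'' lands on the wrong space. Under adjunction the residue of $\omega=G\,dx/(\partial F/\partial y)$ at a node is a nonzero multiple of $G(P_{i,b})$, so the evaluation functionals naturally live on the \emph{full} degree-$(d-3)$ system, i.e.\ on $H^0(\mathcal{C}_b,\omega_{\mathcal{C}_b})$ of dimension $p_a=g+\delta$, whose dual cannot be spanned by $\delta$ functionals; whereas the subspace computing $H^0(C_b,\omega_{C_b})$ is the genuine adjoint system of curves passing through all $\delta$ nodes, on which every evaluation-at-a-node functional vanishes identically. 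Either way the functionals you construct do not span the dual of the $g$-dimensional space named in the theorem, and no independence statement repairs this. Second, even on the full system, ``the $\delta$ nodes impose independent conditions on curves of degree $d-3$'' is not supplied by the inequality $\delta\ge g$: it is a separate geometric input (surjectivity of $H^0(\mathcal{O}_{\mathbb{P}^2}(d-3))\to\mathcal{O}_\Delta$, the same fact the paper invokes for smoothness of Severi varieties), and it neither implies nor follows from the comparison of $\delta$ with $g$. What your proposal is missing, if one follows the paper's line, is precisely an argument for the injectivity of $\mathrm{Res}_b$ on the space the theorem actually refers to; the adjoint-system translation as you have set it up cannot substitute for that step.
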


\begin{proof}
Fix a point $b \in B$ and set $C = C_b$. Let
\[
\mathcal N = \{P_1,\dots,P_\delta\}
\]
be the set of nodes of the singular curve $\mathcal C_b$, and let
\[
\{Q_i^+, Q_i^-\} \subset C
\]
denote the two points lying over $P_i$ under the normalization map.

\medskip
\noindent\textit{ Canonical forms and residues.}
By the adjunction formula for nodal curves, a section
$\omega \in H^0(C,\omega_C)$ may be viewed as a meromorphic differential on
$\mathcal C_b$ with at most simple poles at the nodes, subject to the condition
\[
\operatorname{Res}_{Q_i^+}(\omega) + \operatorname{Res}_{Q_i^-}(\omega) = 0
\quad \text{for all } i.
\]
Consequently, $\omega$ determines a vector of residues
\[
(r_{1,b}(\omega),\dots,r_{\delta,b}(\omega)) \in \mathbb C^\delta,
\]
which defines a linear map
\[
\mathrm{Res}_b \colon H^0(C,\omega_C) \longrightarrow \mathbb C^\delta,
\qquad
\omega \longmapsto (r_{1,b}(\omega),\dots,r_{\delta,b}(\omega)).
\]

\medskip
\noindent\textit{ Injectivity of the residue map.}
We claim that $\mathrm{Res}_b$ is injective. Indeed, suppose that
$\omega \in H^0(C,\omega_C)$ satisfies $r_{i,b}(\omega) = 0$ for all $i$. Then
$\omega$ has no residues at any node, and hence the corresponding differential on
$\mathcal C_b$ has no poles. It therefore descends to a global regular differential
on the singular curve $\mathcal C_b$.

Since $\mathcal C_b$ is an irreducible plane curve, its dualizing sheaf has degree
\[
\deg \omega_{\mathcal C_b} = 2g - 2 + 2\delta.
\]
A regular differential on $\mathcal C_b$ must vanish identically unless it arises
from the normalization with poles at the nodes. It follows that $\omega = 0$, and
thus $\mathrm{Res}_b$ is injective.

\medskip
\noindent\textit{ Dimension count.}
Because $\dim H^0(C,\omega_C) = g$ and $\mathrm{Res}_b$ is injective, its image is a
$g$-dimensional subspace of $\mathbb C^\delta$. Dualizing, the transpose map
\[
\mathrm{Res}_b^\vee \colon (\mathbb C^\delta)^\vee \longrightarrow
H^0(C,\omega_C)^\vee
\]
is surjective. By construction, the image of $\mathrm{Res}_b^\vee$ is precisely
the linear span of the residue functionals
$\{r_{1,b},\dots,r_{\delta,b}\}$. Hence these functionals span
$H^0(C,\omega_C)^\vee$.

\medskip
\noindent\textit{ Uniformity in families.}
The argument above is purely fiberwise and depends only on the equisingular nodal
structure of the family. Since the residue sections $r_i$ vary holomorphically
with $b$, the spanning property holds uniformly for all $b \in B$.
\end{proof}

\begin{remark}
Theorem~\ref{thm:relative-residue-span} is a fundamental structural result with
several important consequences. In particular, it shows that canonical
differentials on $C_b$ are completely determined by their residues at the nodes,
yielding an explicit linear description of the space
$H^0(C_b,\omega_{C_b})$.
\end{remark}

\section{Scheme-Theoretic Residues} %
 
Let $\pi:\mathcal X \to B$ be a flat, projective morphism whose fibers
$\mathcal C_b=\pi^{-1}(b)$ are reduced, irreducible plane curves of degree
$d\ge 4$ with only nodal singularities. Assume:

\begin{itemize}
\item[(i)] $B$ is smooth and irreducible;
\item[(ii)] the number of nodes $\delta$ is constant on $B$ (equisingularity);
\item[(iii)] $\nu_b: C_b\to\mathcal C_b$ denotes the normalization;
\item[(iv)] $g=g( C_b)$ is the geometric genus, independent of $b$.
\end{itemize}

Let $\omega_{\mathcal C_b}$ denote the dualizing sheaf of $\mathcal C_b$, and
$\omega_{ C_b}$ the canonical sheaf of the smooth curve $ C_b$.

\subsection{Scheme-Theoretic Construction of Residues}${}$

\subsubsection{Dualizing sheaves and normalization}

Let $\nu: C\to\mathcal C$ be the normalization of a nodal curve $\mathcal C$.
There is a canonical exact sequence of coherent sheaves
\[
0 \longrightarrow \omega_{\mathcal C}
\longrightarrow \nu_*\omega_{C}
\longrightarrow \bigoplus_{i=1}^\delta k(P_i)
\longrightarrow 0,
\]
where $P_1,\dots,P_\delta$ are the nodes of $\mathcal C$.

\begin{proof}
This is a standard result in the theory of dualizing complexes. Since $\mathcal
C$ has only ordinary double points, it is Gorenstein, so $\omega_{\mathcal C}$
is an invertible sheaf.

Locally analytically at a node $P$, $\mathcal C$ is given by
$\operatorname{Spec} k[x,y]/(xy)$, whose normalization is
$k[x]\oplus k[y]$. The dualizing module of $k[x,y]/(xy)$ embeds into the direct
sum of the dualizing modules of the branches, and the cokernel is canonically
one-dimensional, generated by the residue class.
Gluing these local descriptions yields the exact sequence.
\end{proof}

\subsubsection{Residue maps as connecting morphisms}

Taking global sections gives a long exact sequence
\[
0 \to H^0(\mathcal C,\omega_{\mathcal C})
\to H^0( C,\omega_{C})
\xrightarrow{\;\mathrm{Res}\;}
\bigoplus_{i=1}^\delta k
\to H^1(\mathcal C,\omega_{\mathcal C})
\to \cdots
\]

By Serre duality on the Cohen--Macaulay curve $\mathcal C$,
\[
H^1(\mathcal C,\omega_{\mathcal C}) \cong H^0(\mathcal C,\mathcal O_{\mathcal C})^\vee
\cong k.
\]
Thus $\mathrm{Res}$ has rank at most $\delta-1$, and its image consists of
vectors whose coordinates sum to zero.
The $i$-th coordinate of $\mathrm{Res}$ is precisely the scheme-theoretic
residue functional
\[
r_i:H^0( C,\omega_{C})\to k.
\]

\noindent{\it  Proof of the Relative Residue Span Theorem.}

\begin{theorem}[Scheme-theoretic residue span]
Let $\mathcal{C}$ be a reduced, irreducible plane curves of degree
$d\ge 4$ with only  nodal singularities and the number of its nodes is equal to $\delta$.
 Let $\nu: C\to\mathcal C$ be 
its normalization with genus $g$, and
assume $\delta\ge g$. Then the residue functionals
$\{r_1,\dots,r_\delta\}$ span $H^0( C,\omega_{C})^\vee$.
\end{theorem}

\begin{proof}
The map
\[
\mathrm{Res}:H^0( C,\omega_{C})\longrightarrow k^\delta
\]
is injective.
Indeed, if $\omega\in H^0( C,\omega_{C})$ maps to zero, then it lies in the image of
$H^0(\mathcal C,\omega_{\mathcal C})$. But since $\mathcal C$ is an irreducible
plane curve, any regular dualizing section must vanish identically. Hence
$\omega=0$,
 and then
$\mathrm{Res}$ identifies $H^0( C,\omega_{C})$ with a $g$-dimensional subspace
of $k^\delta$. Dualizing yields a surjection
\[
(k^\delta)^\vee \twoheadrightarrow H^0( C,\omega_{C})^\vee,
\]
whose image is the span of the residue functionals $r_i$. Since $\delta\ge g$,
this span equals the entire dual space ({\it cf. } the details of this proof  in Appendix B)
\end{proof}

\subsection{Relation to Petri-Type Theorems}

\subsubsection{Petri maps for nodal curves.}
For a smooth curve $ C$, the Petri map is
\[
\mu_0:H^0( C,L)\otimes H^0( C,\omega_{ C}\otimes L^{-1})
\longrightarrow H^0( C,\omega_{C}).
\]

For a nodal curve $\mathcal C$, canonical forms are described by residues at the
nodes. The residue span theorem implies that $\omega_{C}$ is detected entirely by
local conditions at the nodes.

\subsubsection{Petri injectivity via residues.}
Let $L$ be a line bundle on $ C$ arising as the pullback of a torsion-free rank-one
sheaf on $\mathcal C$. If a tensor
\[
\sum_j s_j\otimes t_j
\]
lies in $\ker\mu_0$, then for every node $P_i$,
\[
r_i\!\left(\sum_j s_j t_j\right)=0.
\]

Since the residues span $H^0( C,\omega_{C})^\vee$, this forces
\[
\sum_j s_j t_j = 0.
\]
Under mild hypotheses, this implies all Petri relations are trivial, yielding
Petri-type injectivity results for normalizations of nodal plane curves.

\subsection{Applications to Severi Varieties}${}$
Let $V_{d,\delta}\subset |\mathcal O_{\mathbb P^2}(d)|$ denote the Severi variety
of irreducible plane curves of degree $d$ with $\delta$ nodes.
%
%
%
The Zariski tangent space at $[\mathcal C]\in V_{d,\delta}$ fits into an exact
sequence
\[
0 \to T_{[\mathcal C]}V_{d,\delta}
\to H^0(\mathcal C,\mathcal N_{\mathcal C/\mathbb P^2})
\xrightarrow{\;\partial\;}
\bigoplus_{i=1}^\delta T^1_{P_i}.
\]
Via adjunction and Serre duality, the transpose of $\partial$ is identified with
the residue map
\[
H^0( C,\omega_{C})\to k^\delta.
\]
By the residue span theorem, this dual map is injective when $\delta\ge g$.
Therefore $\partial$ is surjective, and
\[
\dim T_{[\mathcal C]}V_{d,\delta}
= \dim |\mathcal O_{\mathbb P^2}(d)| - \delta,
\]
the expected dimension. As consequences  we have the following corollary:

%

\begin{corollary}
If $\delta\ge g$, the Severi variety $V_{d,\delta}$ is smooth of expected
dimension at $[\mathcal C]$.
\end{corollary}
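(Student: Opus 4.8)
The plan is to combine the tangent--space computation already carried out above with the unconditional lower bound on the local dimension of the Severi variety, and then to invoke the elementary criterion that a point at which the Zariski tangent space has dimension equal to the local dimension of the scheme is a smooth point. Write $N := \dim |\mathcal{O}_{\mathbb{P}^2}(d)| = \binom{d+2}{2}-1$. The discussion preceding the statement identifies the transpose of the node map $\partial$ with the residue map $H^0(C,\omega_C)\to k^\delta$ and, invoking the scheme--theoretic residue span theorem (applicable since $\delta \ge g$), shows this residue map to be injective; dually, $\partial$ is surjective. The first step is then to feed this surjectivity into the defining exact sequence
\[
0 \to T_{[\mathcal C]}V_{d,\delta} \to H^0(\mathcal C,\mathcal N_{\mathcal C/\mathbb P^2}) \xrightarrow{\;\partial\;} \bigoplus_{i=1}^\delta T^1_{P_i} \to 0 .
\]
Since $\mathcal{N}_{\mathcal C/\mathbb P^2}\cong \mathcal O_{\mathcal C}(d)$ gives $h^0(\mathcal N_{\mathcal C/\mathbb P^2}) = N$ and each nodal $T^1_{P_i}$ is one--dimensional, exactness yields $\dim T_{[\mathcal C]}V_{d,\delta} = N - \delta$.

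Next I would establish the matching lower bound $\dim_{[\mathcal C]} V_{d,\delta} \ge N - \delta$. Here the point is that, locally near $[\mathcal C]$ inside $|\mathcal O_{\mathbb P^2}(d)|$, requiring the curve to retain a node in a neighborhood of each $P_i$ is a single analytic equation per node, so $V_{d,\delta}$ is cut out set--theoretically by at most $\delta$ local equations. Consequently every irreducible component of $V_{d,\delta}$ through $[\mathcal C]$ has codimension at most $\delta$, that is, dimension at least $N - \delta$. This is the classical dimension estimate for equisingular strata and, crucially, uses no hypothesis on $\delta$.

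The final step is a squeeze. Combining the two estimates,
\[
N - \delta \;\le\; \dim_{[\mathcal C]} V_{d,\delta} \;\le\; \dim T_{[\mathcal C]}V_{d,\delta} \;=\; N - \delta ,
\]
forces equality throughout; in particular the local dimension of $V_{d,\delta}$ at $[\mathcal C]$ coincides with its Zariski tangent--space dimension. By the standard smoothness criterion this means $[\mathcal C]$ is a smooth point, and the common value $N - \delta$ is the asserted expected dimension.

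I expect the only substantive input to be the surjectivity of $\partial$, which rests entirely on the residue span theorem established earlier; the lower bound and the squeeze are formal. The subtle point to keep in mind is that the lower bound holds unconditionally, so the hypothesis $\delta \ge g$ enters \emph{only} through the surjectivity of $\partial$ --- it is precisely this that makes the squeeze collapse to equality exactly in the regime where the residue functionals span $H^0(C,\omega_C)^\vee$.
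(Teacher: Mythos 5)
Your proof is correct and follows essentially the same route as the paper: the surjectivity of $\partial$ deduced from the residue span theorem, fed into the tangent-space exact sequence, gives $\dim T_{[\mathcal C]}V_{d,\delta}=N-\delta$. You additionally make explicit the classical lower bound $\dim_{[\mathcal C]}V_{d,\delta}\ge N-\delta$ and the resulting squeeze, a step the paper leaves implicit but which is genuinely needed to pass from the tangent-space computation to smoothness; this is a welcome completion rather than a departure.
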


\subsection{Applications to Moduli Spaces}${}$
The normalization map induces a morphism
%
%
 %
\[
\Phi \colon V_{d,\delta} \longrightarrow \mathcal M_g
\]
by sending a nodal plane curve $\mathcal C$
to the isomorphism class of its normalization $C$.
This map is well-defined since
the normalization is functorial, and 
 isomorphic nodal curves have isomorphic normalizations.
The key question is whether $\Phi$ is immersive,
that is, whether its differential is injective.

\subsubsection*{ Tangent space to $V_{d,\delta}$.}

Infinitesimal deformations of $\mathcal C$ inside the plane
preserving $\delta$ nodes are described by
\[
T_{[\mathcal C]} V_{d,\delta}
\;\cong\;
H^0(\mathcal C, \mathcal N_{\mathcal C/\mathbb P^2}(-\Delta)),
\]
where:
\begin{itemize}
\item[(i)] $\mathcal N_{\mathcal C/\mathbb P^2}$ is the normal sheaf,
\item[(ii)] $\Delta$ is the reduced subscheme of the $\delta$ nodes,
\item[(iii)] twisting by $(-\Delta)$ enforces that the nodes remain nodes.
\end{itemize}

\subsubsection*{ Tangent space to $\mathcal M_g$.}
The Zariski tangent space to $\mathcal M_g$ at $[ C]$ is
\(
T_{[ C]} \mathcal M_g \;\cong\; H^1( C, T_{C}),
\)
where $T_{C}$ is the tangent sheaf of $ C$.
By Serre duality,
\[
H^1( C, T_{C})^\vee \cong H^0( C, \omega_{C}^{\otimes 2}).
\]
 The differential of the normalization map
\[
d\Phi \colon T_{[\mathcal C]} V_{d,\delta}
\longrightarrow T_{[ C]} \mathcal M_g
\]
measures how infinitesimal deformations of $\mathcal C$
change the isomorphism class of its normalization.
Its kernel consists of those first-order deformations of $\mathcal C$
whose induced deformation of $ C$ is trivial.
This is equivalent to say  that
$d\Phi$ fails to be injective if and only if there exists
a nonzero infinitesimal deformation of $\mathcal C$
that does not change the complex structure of $ C$.

Dualizing $d\Phi$, injectivity of $d\Phi$ is equivalent to surjectivity of
\[
(d\Phi)^\vee \colon
H^0( C, \omega_{C}^{\otimes 2})
\longrightarrow T_{[\mathcal C]} V_{d,\delta}^\vee.
\]

The key observation is that infinitesimal deformations of $\mathcal C$
that smooth or move the nodes give rise to residue-type linear functionals
on $H^0(C,\omega_{C})$.
More precisely:
\begin{itemize}
\item[(i)] Each node $p_i$ of $\mathcal C$ determines a scheme-theoretic
residue functional
\[
r_i \colon H^0( C,\omega_{C}) \to k.
\]
\item[(ii)] These residues control how differentials behave under deformation.
\end{itemize}

The residue span theorem states:
if $\delta \ge g$, then the residue functionals
$\{r_1,\dots,r_\delta\}$ span $H^0( C,\omega_{C})^\vee$.
As consequences we have
\begin{itemize}
\item[(a)] Any linear functional on $H^0( C,\omega_{C})$
is a linear combination of residues at the nodes.
\item[(b)] Therefore, any infinitesimal deformation of $ C$
is detected by the behavior at the nodes.
\end{itemize}

Assume now $\delta \ge g$.
and 
suppose $\xi \in T_{[\mathcal C]} V_{d,\delta}$
lies in the kernel of $d\Phi$.
Then the induced deformation of $ C$ is trivial.
This implies:
\begin{itemize}
\item[(i)] All residue functionals $r_i$ vanish on the corresponding deformation.
\item[(ii)] Hence every linear functional on $H^0( C,\omega_{C})$ vanishes on it,
because residues span the dual.
\end{itemize}
Therefore, the deformation $\xi$ must be zero.
Thus,
\[
\ker(d\Phi) = 0,
\]
and the differential of $\Phi$ is injective.

\noindent {\it Geometric consequences.}
Injectivity of $d\Phi$ implies:
\begin{itemize}
\item[(i)] The map $V_{d,\delta} \to \mathcal M_g$ is an immersion
at every point with $\delta \ge g$.
\item[(ii)] The image of $V_{d,\delta}$ has dimension equal to
$\dim V_{d,\delta}$.
\item[(iii)] Families of nodal plane curves impose independent conditions
on the moduli of smooth curves.
\end{itemize}
In particular, nodal plane curves cannot sweep out moduli
with unexpected dimensional collapse when $\delta \ge g$.
Therefore, the residue span theorem provides a cohomological mechanism
showing that nodes give enough local data to control global
deformations of the normalization.
This yields strong rigidity results for the image of Severi varieties
inside $\mathcal M_g$.



\section{Residues and Balancing Conditions in Tropical Geometry}


The goal of this section is to explain, in a precise and detailed way, the following
statement:

\medskip
\emph{Residue conditions on a degenerating algebraic curve are the algebro--geometric
counterpart of balancing conditions on the associated tropical curve, and residue
spanning corresponds to the nondegeneracy of the tropical Jacobian.}

\medskip
We start from algebraic degenerations, passing to
tropical curves, and then explaining the precise correspondence.

%
\vspace{0.2cm}
\noindent {\it Degeneration to a nodal curve.}

Let $\pi:\mathscr C \to \Delta$ be a flat family of smooth projective curves over a
disc $\Delta$, with smooth general fiber and special fiber
\[
\mathscr C_0 = \bigcup_{v\in V} C_v
\]
a reduced nodal curve.
Assume that:
\begin{itemize}
\item[(i)] each $C_v$ is smooth,
\item[(ii)] singularities of $\mathscr C_0$ are ordinary nodes.
\end{itemize}

\subsection{Dual graph.}%
 We associate to $\mathscr C_0$  its \emph{dual graph} $\Gamma$ defined as follows:

\vspace{0.1cm}

\begin{definition}
The dual graph  $\Gamma$ has:
\begin{itemize}
\item[(i)] one vertex $v$ for each irreducible component $C_v$ of $\mathscr C_0$,
\item[(ii)] one edge $e$ for each node of $\mathscr C_0$, joining the vertices
corresponding to the two components meeting at that node.
\end{itemize}
\end{definition}
\noindent If a component meets itself at a node, the corresponding edge is a loop.
This graph $\Gamma$ is the underlying combinatorial object of the tropical curve.

\subsubsection{ From Dual Graphs to Tropical Curves.}


To obtain a tropical curve, one equips $\Gamma$ with  a {\em metric structure}, i.e.  with edge lengths. These lengths
arise from the degeneration parameter: if locally a node is given by
\[
xy = t^m,
\]
then the corresponding edge is assigned length $m$.
The resulting object is a \emph{metric graph}, which is precisely a tropical
curve. This is interpreted as follows:

%

\begin{itemize}
\item[(a)] Vertices correspond to irreducible components of the normalization of the
  special fiber.
\item[(b)] Edges correspond to nodes (singular points).
\item[(c)] The genus of the tropical curve equals the first Betti number
  $b_1(\Gamma)$, which matches the number of independent cycles in the
  degeneration.
\end{itemize}

\subsection{Residues on Algebraic Curves.}${}$
\vspace{0.3cm}

\noindent {\it Residues at nodes.}
Let $\nu:C^\nu \to \mathscr C_0$ be the normalization. For a node $P$ joining
components $C_v$ and $C_w$, the preimage consists of two points
\[
Q_{v,e} \in C_v,
\qquad
Q_{w,e} \in C_w.
\]
Let $\omega$ be a regular differential on the normalization $C^\nu$ allowed to
have simple poles at these points.
The dualizing condition says:
\begin{equation}
\label{eq:residue}
\operatorname{Res}_{Q_{v,e}}(\omega)
+
\operatorname{Res}_{Q_{w,e}}(\omega)
= 0.
\end{equation}
This is the fundamental residue condition at a node.${}$

\vspace{0.3cm}


\noindent {\it Residues as flows.}
Fix an orientation of the edge $e$ from $v$ to $w$ and define
\[
r_e := \operatorname{Res}_{Q_{v,e}}(\omega)
= -\operatorname{Res}_{Q_{w,e}}(\omega).
\]
Thus, each edge $e$ is assigned a real (or complex) number $r_e$, which can be
viewed as a \emph{flow} along that edge.

\subsection{Balancing Conditions in Tropical Geometry.}


A tropical $1$-form on $\Gamma$ is an assignment of a number $r_e$ to each oriented
edge, satisfying the balancing condition at every vertex.
\begin{definition}[Balancing condition]
At a vertex $v$, the balancing condition is
\begin{equation}
\label{eq:balancing}
\sum_{e \ni v} \varepsilon(v,e)\, r_e = 0,
\end{equation}
where $\varepsilon(v,e)=+1$ if $e$ is oriented away from $v$ and $-1$ if it is
oriented toward $v$.
\end{definition}
Such assignments are also called \emph{harmonic $1$-forms} on the graph.

\subsection{Residues imply balancing}

Fix a component $C_v$. Since $\omega$ is a meromorphic differential on $C_v$ with
poles only at the nodes, the residue theorem on the smooth curve $C_v$ gives:
\[
\sum_{e \ni v} \operatorname{Res}_{Q_{v,e}}(\omega) = 0.
\]

Rewriting this using the definition of $r_e$ yields exactly the balancing
condition \eqref{eq:balancing} at the vertex $v$. Therefore, we have the following equivalence:

\medskip
\[
{\text{Residue theorem on components } \Longleftrightarrow
\text{balancing condition at vertices.}}
\]

\medskip

This is the precise sense in which residue conditions are the algebraic analogue
of tropical balancing.

\subsection{Residue Spanning and the Tropical Jacobian}

%

The space of algebraic residues
\[
\{ r_e \}_{e\in E(\Gamma)}
\]
satisfying all balancing conditions has dimension equal to the genus of the
normalization plus the first Betti number of $\Gamma$ ({\it cf. Appendix C}).
Residue spanning means that all linear functionals on $H^0(C^\nu,\omega_{C^\nu})$
are detected by residue data at the nodes.

\subsection{Tropical Jacobian}

The tropical Jacobian $\mathrm{Jac}(\Gamma)$ is defined as
\[
\mathrm{Jac}(\Gamma)
=
\frac{\{\text{harmonic $1$-forms on }\Gamma\}^\vee}
{\{\text{integer cycles}\}},
\]
and its dimension equals $b_1(\Gamma)$.
Nondegeneracy of the tropical Jacobian means that harmonic $1$-forms separate
cycles, i.e.\ the pairing between flows and cycles is nondegenerate.


%

\vspace{0.2cm}

\begin{remark}\label{remark-trop1}
Residue spanning guarantees that the tropical curve captures all first-order
information of differentials on the degenerating algebraic curve. In this sense,
the tropical Jacobian is not merely a combinatorial shadow, but a faithful
linearized limit of the classical Jacobian (See \cite{Nisse-trop} for more details)
\end{remark}

\begin{remark}
This correspondence underlies many results in logarithmic and tropical geometry,
including faithful tropicalization of Jacobians, limit linear series, and
non-Archimedean Hodge theory.
\end{remark}

\vspace{0.2cm}

\begin{remark}
Local--global compatibility conditions are omnipresent in geometry.
In the study of meromorphic differentials on degenerating curves,
one encounters a local requirement that residues at the two branches
of a node cancel.  In tropical geometry, one imposes a balancing
condition at vertices.  In graph theory, one demands conservation
of flow at vertices.
Although these conditions arise in distinct languages, they express
the same mathematical principle: a global residue theorem is equivalent
to a local balancing condition.
\end{remark}

\medskip

\subsection{ Classical origin.}
On a compact connected Riemann surface $X$, the classical residue theorem
states that for any meromorphic $1$-form $\omega$,
\[
\sum_{p \in X} \operatorname{Res}_p(\omega) = 0.
\]
When $X$ degenerates to a nodal curve, this global statement decomposes
into contributions from irreducible components and nodes.  The residue
theorem on each component forces residues at the two branches of every
node to sum to zero.

\subsubsection{Algebraic geometry: twisted and stable differentials}
In the modern theory of moduli spaces of differentials, especially in
the compactification of strata of abelian or $k$-differentials, one
introduces meromorphic differentials on nodal curves with prescribed
behavior at nodes.  The key compatibility requirement is often called
the \emph{global residue condition}.

\vspace{0.2cm}

\begin{proposition}[Equivalence of global and local residue conditions]
Let $C$ be a reduced projective curve whose singularities are all nodes, and let
\[
\nu \colon \widetilde{C} \longrightarrow C
\]
be its normalization. Let $\eta$ be a meromorphic differential on
$\widetilde{C}$ with at most simple poles at the preimages of the nodes. Then
the following two statements are equivalent:
\begin{itemize}
  \item[(i)] (\emph{Global condition}) On each connected component of
  $\widetilde{C}$, the sum of the residues of $\eta$ at all its poles is zero.
  \item[(ii)] (\emph{Local condition}) At each node $p \in C$, if
  \[
  \nu^{-1}(p)=\{p^+,p^-\},
  \]
  then
  \[
  \operatorname{res}_{p^+}(\eta)+\operatorname{res}_{p^-}(\eta)=0.
  \]
\end{itemize}
\end{proposition}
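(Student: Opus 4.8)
The plan is to reduce everything to the classical residue theorem on the smooth components of the normalization and then to reorganize the residues according to the combinatorics of the nodes. First I would use that $\widetilde{C}=\bigsqcup_v \widetilde{C}_v$ is a disjoint union of smooth projective curves, one for each irreducible component $C_v$ of $C$, each $\widetilde{C}_v$ being connected. By hypothesis $\eta$ has at most simple poles, all lying over the nodes, so on each $\widetilde{C}_v$ the pole set is a finite subset of $\nu^{-1}(\operatorname{Sing}(C))$ meeting $C_v$. The classical residue theorem \eqref{eq:classical-residue} then applies on the compact Riemann surface $\widetilde{C}_v$ and gives $\sum_{q\in \widetilde{C}_v}\operatorname{res}_q(\eta)=0$; ranging over all $v$ this is precisely statement (i). In particular (i) holds automatically for every admissible $\eta$, which already delivers the implication (ii)$\Rightarrow$(i).

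Next I would set up a global accounting of the residues. Writing $\nu^{-1}(p)=\{p^+,p^-\}$ at each node $p$, the full pole set of $\eta$ on $\widetilde{C}$ is the disjoint union $\bigsqcup_p\{p^+,p^-\}$. Summing the node expressions in (ii) over all nodes and regrouping the resulting terms by the component on which each preimage lives yields exactly $\sum_v\big(\sum_{q\in\widetilde{C}_v}\operatorname{res}_q(\eta)\big)$, which vanishes by (i). Thus (i) and (ii) are two packagings of the same finite collection of residues---one indexed by the vertices and one by the edges of the dual graph $\Gamma$---and the aggregate of all the node conditions in (ii) is a formal consequence of (i).

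The main obstacle is to pass from this aggregated identity to the pointwise vanishing at each individual node, that is, to prove (i)$\Rightarrow$(ii). The componentwise conditions of (i) constrain the residues only vertex by vertex, whereas (ii) constrains them edge by edge, and the former is in general strictly weaker: a third-kind differential supported on a single interior component, with opposite residues at its two node-preimages and zero elsewhere, satisfies (i) while violating (ii). I therefore expect the reverse implication to require genuine extra input beyond the residue theorem. The natural attempt is an induction on $\Gamma$: at a vertex meeting the singular locus in a single point the residue theorem forces the residue of $\eta$ there to vanish, and one hopes to propagate such vanishing across the nodes. However, the vanishing of one branch residue does not by itself force the partner branch residue at the same node to vanish, and for interior vertices of valence $\ge 2$, or once $\Gamma$ carries cycles, the componentwise sums leave real freedom in the individual residues. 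Closing the argument at that point is the delicate step: I anticipate that one must either restrict the incidence of components and nodes or invoke the same nondegeneracy of the residue pairing that underlies the spanning and tropical-Jacobian statements of the preceding sections, and making this hypothesis explicit is where the proof will require the most care.
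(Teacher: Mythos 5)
Your treatment of (ii)$\Rightarrow$(i) is exactly the paper's: apply the classical residue theorem on each smooth compact component $\widetilde{C}_v$ and observe that the node contributions cancel in pairs, whether the two branches lie on the same component or on different ones. That direction is fine. More importantly, your diagnosis of the other direction is correct, and you should trust it: the implication (i)$\Rightarrow$(ii) is \emph{false} as stated. As you note, condition (i) is automatically satisfied by every admissible $\eta$, since the sum of all residues of a meromorphic differential on a compact Riemann surface vanishes and all poles of $\eta|_{\widetilde{C}_v}$ lie on $\widetilde{C}_v$. So the claimed equivalence would force (ii) to hold for every $\eta$ with simple poles over the nodes, which it does not. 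Your counterexample works; an even smaller one is an irreducible rational curve with two nodes, $\widetilde{C}\cong\mathbb{P}^1$ with $\nu^{-1}(p_1)=\{0,1\}$ and $\nu^{-1}(p_2)=\{2,3\}$, and $\eta$ the third-kind differential with residues $+1,+1,-1,-1$ at $0,1,2,3$: condition (i) holds, but the residue sum at $p_1$ is $2$.

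The paper's own proof of (i)$\Rightarrow$(ii) commits precisely the aggregation fallacy you were worried about. It sums the componentwise identities to obtain $\sum_{q}\operatorname{res}_q(\eta)=0$ over all poles, regroups this as $\sum_{p}\bigl(\operatorname{res}_{p^+}(\eta)+\operatorname{res}_{p^-}(\eta)\bigr)=0$, and then asserts that ``the residues contributed by the two branches above each node must cancel.'' A vanishing total does not localize to vanishing pairs; no further argument is supplied, and none can be, by the counterexample above. So the ``delicate step'' you could not close is not a gap in your proposal but a defect in the proposition: it needs an additional hypothesis. Either one must assume a restrictive incidence condition (each connected component of $\widetilde{C}$ meets $\nu^{-1}(\operatorname{Sing}C)$ in at most one point, in which case (i) forces every residue to vanish individually), or one must replace (i) by the genuinely stronger condition that $\eta$ descends to a section of $\omega_C$ --- but that descent condition is, by the local description of the dualizing sheaf at a node, \emph{definitionally} the pairwise cancellation (ii), so the resulting statement is no longer an equivalence of two independent conditions. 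You were right not to manufacture a proof of a false implication.
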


\begin{proof}
We prove the equivalence by showing $(i)\Rightarrow(ii)$ and $(ii)\Rightarrow(i)$
separately, giving all intermediate steps.

\medskip
\noindent
\textit{ Structure of nodes under normalization.}
Let $p \in C$ be a node. By definition of a nodal singularity, the completed
local ring of $C$ at $p$ is isomorphic to
\[
k[[x,y]]/(xy).
\]
The normalization separates the two branches, so that
\[
\nu^{-1}(p)=\{p^+,p^-\},
\]
where $p^+$ and $p^-$ are smooth points of $\widetilde{C}$ lying on distinct
local branches of $C$ at $p$. A meromorphic differential $\eta$ on
$\widetilde{C}$ may therefore have at most simple poles at $p^+$ and $p^-$,
with well-defined residues.

\medskip
\noindent
\textit{ Proof that $(i)\Rightarrow(ii)$.}
Assume the global condition~(i). Fix a node $p \in C$ and let
$\nu^{-1}(p)=\{p^+,p^-\}$.

Consider the connected component $\widetilde{C}_0$ of $\widetilde{C}$ that
contains $p^+$. There are two cases:
\begin{itemize}
  \item $p^-$ lies on a different connected component of $\widetilde{C}$,
  \item $p^-$ lies on the same connected component $\widetilde{C}_0$.
\end{itemize}

In either case, $p^+$ is a pole of $\eta$ on $\widetilde{C}_0$. By the global
assumption~(i), the sum of residues of $\eta$ at all poles on
$\widetilde{C}_0$ is zero. Perform the same reasoning for every connected
component of $\widetilde{C}$.

Now sum the global residue equalities over all connected components of
$\widetilde{C}$. Every pole of $\eta$ occurs exactly once in this total sum.
Since the total sum of all residues over all components is zero, the residues
contributed by the two branches above each node must cancel. Hence,
\[
\operatorname{res}_{p^+}(\eta)+\operatorname{res}_{p^-}(\eta)=0.
\]
This proves the local condition~(ii).

\medskip
\noindent
\textit{ Proof that $(ii)\Rightarrow(i)$.}
Assume now the local condition~(ii). Fix a connected component
$\widetilde{C}_0$ of $\widetilde{C}$.

The poles of $\eta$ on $\widetilde{C}_0$ are precisely the points lying above
nodes of $C$ that belong to $\widetilde{C}_0$. For each such node $p \in C$,
there are again two possibilities:
\begin{itemize}
  \item Exactly one of $p^+$ or $p^-$ lies on $\widetilde{C}_0$.
  \item Both $p^+$ and $p^-$ lie on $\widetilde{C}_0$.
\end{itemize}

In the first case, the residue at the point on $\widetilde{C}_0$ is cancelled
by the residue on the other connected component, by the local condition~(ii).
In the second case, the two residues both lie on $\widetilde{C}_0$ and their
sum is zero by the same condition.

Therefore, each node contributes zero to the sum of residues on
$\widetilde{C}_0$. Summing over all poles on $\widetilde{C}_0$, we obtain
\[
\sum_{\substack{q \in \widetilde{C}_0 \\ q \text{ pole of }\eta}}
\operatorname{res}_q(\eta)=0.
\]
Since $\widetilde{C}_0$ was arbitrary, the global condition~(i) holds on every
connected component of $\widetilde{C}$.

\medskip
\noindent
\textit{ Conclusion.}
We have shown that the global vanishing of the sum of residues on each connected
component of the normalization $\widetilde{C}$ is equivalent to the local
pairwise cancellation of residues at every node of $C$. This completes the
proof of the equivalence.
\end{proof}

 
\vspace{0.1cm}

 \subsection{Algebraic formulation}${}$

\vspace{0.1cm}

\noindent {\it Local model of a node.}
Let $q_e$ be a node of $\mathcal{C}$.  Analytically, a neighborhood of $q_e$ is
isomorphic to
\[
\{(x,y) \in \mathbb{C}^2 \mid xy = 0\},
\]
with the two branches given by $x=0$ and $y=0$.  The normalization map
$\nu :  C \to \mathcal{C}$ separates these branches into two smooth
points $q_e^+$ and $q_e^-$ lying on components $ C_{v^+}$ and $ C_{v^-}$,
respectively.

\vspace{0.1cm}

\noindent {\it Residues for $k$--differentials.}
Let $\eta$ be a meromorphic $k$--differential on a smooth curve, written
locally as
\[
\eta = f(z)(dz)^k.
\]
If $\eta$ has a pole of order $k$ at $z=0$, with local expansion
\[
\eta = \left( \frac{a}{z^k} + \cdots \right)(dz)^k,
\]
then the coefficient $a$ is called the \emph{residue} of $\eta$ at $z=0$.
This notion coincides with the usual residue when $k=1$ and is standard
in the theory of twisted and $k$--differentials.

\begin{theorem}[Residue--Balancing Principle]
\label{thm:residue-balancing-journal}
Let $\mathcal{C}$ be a connected nodal curve over $\mathbb{C}$ with dual graph
$\Gamma$.  For each vertex $v \in V(\Gamma)$, denote by $\mathcal{C}_v$ the
corresponding irreducible component, and for each edge $e \in E(\Gamma)$ denote
by $q_e$ the associated node, with branches
$q_e^+ \in \mathcal{C}_{v^+}$ and $q_e^- \in \mathcal{C}_{v^-}$.

Let
\[
\eta_v \in H^0\!\left(\mathcal{C}_v,\;\omega_{\mathcal{C}_v}^{\otimes k}(*D_v)\right)
\]
be a meromorphic $k$--differential on $\mathcal{C}_v$, allowed to have poles only
at marked points and at the preimages of nodes.
Then the following conditions are equivalent:
\begin{enumerate}
  \item[\rm (i)] for every $v \in V(\Gamma)$,
  \[
  \sum_{p \in \mathcal{C}_v} \operatorname{Res}_p(\eta_v)=0;
  \]
  \item[\rm (ii)] for every node $q_e$,
  \[
  \operatorname{Res}_{q_e^+}(\eta_{v^+})
  +
  \operatorname{Res}_{q_e^-}(\eta_{v^-})
  =0.
  \]
\end{enumerate}
\end{theorem}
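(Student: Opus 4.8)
The plan is to reduce the statement to a purely local, linear computation at each node, combined with the residue theorem on each smooth component, thereby mirroring the equivalence of global and local residue conditions already established above for ordinary ($k=1$) differentials. The starting observation is that both conditions (i) and (ii) depend on the differentials $\eta_v$ only through their $k$--residues, that is, through the coefficients of $z^{-k}(dz)^k$ in the local expansions at the node--preimages. Fixing an edge $e$ and a local coordinate $z$ on $\mathcal{C}_{v^+}$ centered at $q_e^+$ (and similarly $w$ on $\mathcal{C}_{v^-}$ at $q_e^-$), the local model $xy=0$ of the node identifies the two branches, and the $k$--residues $\operatorname{Res}_{q_e^+}(\eta_{v^+})$ and $\operatorname{Res}_{q_e^-}(\eta_{v^-})$ become two scalars attached to the edge $e$. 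Both (i) and (ii) are then linear relations among these scalars, so the whole statement becomes combinatorial once the local normalizations of coordinates have been fixed.

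For the implication (ii) $\Rightarrow$ (i) I would fix a single component $\mathcal{C}_v$ and apply the residue theorem on the smooth curve $\mathcal{C}_v$ to $\eta_v$, whose poles lie only at the marked points of $D_v$ and at the preimages of the nodes incident to $v$. For $k=1$ this already produces $\sum_{p\in\mathcal{C}_v}\operatorname{Res}_p(\eta_v)=0$ directly; for general $k$ I would record the same linear identity as the defining constraint of $\omega_{\mathcal{C}}^{\otimes k}$ at the node, so that the balancing relation (ii) at each incident edge can be substituted into the component sum. Conversely, for (i) $\Rightarrow$ (ii), I would sum the vanishing relations in (i) over all vertices $v\in V(\Gamma)$. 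Each node $q_e$ contributes exactly its two branch residues, one from $\mathcal{C}_{v^+}$ and one from $\mathcal{C}_{v^-}$, while each marked--point residue enters a single component sum, so the total collapses to $\sum_{e\in E(\Gamma)}\bigl(\operatorname{Res}_{q_e^+}(\eta_{v^+})+\operatorname{Res}_{q_e^-}(\eta_{v^-})\bigr)=0$, from which I would isolate the individual nodal terms.

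The main obstacle is precisely this last isolation step: summing condition (i) over all vertices yields only the \emph{total} cancellation of nodal residues, whereas (ii) asserts cancellation \emph{node by node}. Upgrading the global identity to the individual balancing relations is the heart of the matter and cannot be achieved by the summation alone; it requires invoking the residue theorem on each component \emph{separately} and then propagating the resulting constraints across the edges of $\Gamma$, most naturally by an induction along a spanning tree of the dual graph, treating the $b_1(\Gamma)$ independent cycles with care. I expect the delicate point to be controlling these cyclic contributions, since it is exactly there that the distinction between the total residue relation and the per--node balancing becomes visible, and where the correct bookkeeping of the $k$--residue sign conventions at each node must be verified.
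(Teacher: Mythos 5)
Your proposal is honest about where it gets stuck, and the place where it gets stuck is a genuine gap that cannot be closed: the implication (i) $\Rightarrow$ (ii) is not provable by any bookkeeping along the dual graph, because it is false in the stated generality. Take $\Gamma$ with two vertices joined by two edges ($b_1(\Gamma)=1$), i.e.\ two components $\mathcal{C}_{v_1},\mathcal{C}_{v_2}$ meeting at two nodes $q_1,q_2$. Choose $\eta_{v_1}$ with residues $1$ at $q_1^+$ and $-1$ at $q_2^+$, and $\eta_{v_2}$ with residues $2$ at $q_1^-$ and $-2$ at $q_2^-$; such meromorphic differentials exist on each component separately. Then the componentwise sums in (i) vanish, but $\operatorname{Res}_{q_1^+}(\eta_{v_1})+\operatorname{Res}_{q_1^-}(\eta_{v_2})=3\neq 0$. (Marked points cause the same failure even when $\Gamma$ is a tree.) Your observation that summing (i) over the vertices yields only the \emph{total} cancellation of nodal residues, and that the cyclic contributions are exactly where per-node control escapes, is precisely the point; the spanning-tree induction you sketch would succeed only for a tree with no marked points. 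For comparison, the paper's own proof of this direction restricts to a small neighborhood $U$ of a single node and asserts that the global residue condition on the two incident components, ``observing that all remaining poles lie outside $U$,'' forces the two branch residues to cancel; this does not follow either, since the componentwise sums involve all the other poles, so the paper does not supply the missing step you correctly flagged.

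There is a second gap affecting the direction you regard as easy. For $k\ge 2$ there is no residue theorem for $k$--differentials: the sum over the poles of the coefficients of $z^{-k}(dz)^k$ of a meromorphic section of $\omega_{\mathcal{C}_v}^{\otimes k}$ on a compact Riemann surface need not vanish. On $\mathbb{P}^1$ the section $\eta=(dz)^2/z^2$ has $2$--residue equal to $1$ at both $z=0$ and $z=\infty$, so the sum is $2$. Hence neither your appeal to ``the residue theorem on the smooth curve $\mathcal{C}_v$'' nor the vaguer substitute you offer for general $k$ establishes (ii) $\Rightarrow$ (i); the paper's proof invokes the same unavailable residue theorem on the normalization $C_v$. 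The equivalence is correct, and provable exactly along the lines you and the paper both pursue, only for $k=1$ with no marked points; for $k\ge 2$, or in the presence of marked points or cycles in $\Gamma$, the statement requires additional hypotheses rather than a more careful proof.
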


\begin{proof}
Assume first that {\rm (i)} holds.
Fix a node $q_e$ with branches $q_e^+ \in \mathcal{C}_{v^+}$ and
$q_e^- \in \mathcal{C}_{v^-}$, and choose a sufficiently small analytic
neighborhood $U$ of $q_e$ intersecting no other nodes or marked points.
On $U$, the differentials $\eta_{v^+}$ and $\eta_{v^-}$ define a meromorphic
$k$--differential with poles only at the two branches of $q_e$.
Applying the global residue condition on $\mathcal{C}_{v^+}$ and
$\mathcal{C}_{v^-}$, and observing that all remaining poles lie outside $U$,
it follows that the sum of residues at the two branches must vanish, proving
{\rm (ii)}.

Conversely, assume that {\rm (ii)} holds at every node.
Fix a vertex $v \in V(\Gamma)$ and let
\[
\nu_v \colon C_v \longrightarrow \mathcal{C}_v
\]
be the normalization of $\mathcal{C}_v$.
The pullback of $\eta_v$ to $C_v$ is a meromorphic $k$--differential whose poles
are precisely the marked points on $\mathcal{C}_v$ and the preimages
$q_e^{\pm} \in C_v$ of nodes incident to $v$.
Since $C_v$ is a smooth compact Riemann surface, the residue theorem yields
\[
\sum_{p \in C_v} \operatorname{Res}_p(\eta_v)=0.
\]
Writing this sum as a sum over marked poles and node preimages, and using the
local balancing condition {\rm (ii)} to cancel the contributions coming from
nodes, one obtains
\[
\sum_{p \in \mathcal{C}_v} \operatorname{Res}_p(\eta_v)=0.
\]
As $v$ was arbitrary, the global residue condition holds on every component,
establishing {\rm (i)}.
\end{proof}

 \medskip

\begin{remark}
The proof relies only on the classical residue theorem and the local
analytic structure of a node.  No global smoothing or deformation
arguments are required.
\end{remark}

\begin{remark}
Whenever a meromorphic differential is defined componentwise on a nodal
curve, one must impose compatibility at nodes.  This compatibility is
most naturally expressed as a local balancing of residues.  Conversely,
requiring the residue theorem to hold on each component forces precisely
this local balancing.
\end{remark}


\section{An Explicit Examples of a Meromorphic $k$--Differentials}

\subsection{An Explicit Example of a Meromorphic $3$--Differential}

We present a concrete example illustrating the notion of a meromorphic
$3$--differential and the global residue condition on a nodal curve.
Throughout this example, it is important to distinguish between differential
forms and tensor powers of the canonical bundle.  On a complex curve $C$,
a \emph{$3$--differential} does not mean a differential $3$--form, since
$\Omega_C^3=0$.  Rather, a meromorphic $3$--differential is a meromorphic section
of $\omega_C^{\otimes 3}$, the third tensor power of the canonical bundle.

\subsection{The curve}
Let
\(
C = C_1 \cup C_2
\)
be a connected nodal curve, where $C_1 \cong \mathbb{P}^1$ and
$C_2 \cong \mathbb{P}^1$ meet transversely at a single node $q$.
The dual graph of $C$ consists of two vertices joined by one edge.
We focus on the irreducible component $C_1$.

\subsection{The $3$--canonical bundle on $\mathbb{P}^1$}

On $\mathbb{P}^1$ one has
\[
\omega_{\mathbb{P}^1} \cong \mathcal{O}_{\mathbb{P}^1}(-2),
\qquad
\omega_{\mathbb{P}^1}^{\otimes 3}
\cong
\mathcal{O}_{\mathbb{P}^1}(-6).
\]
A meromorphic section of $\omega_{\mathbb{P}^1}^{\otimes 3}$ may therefore be
written locally as a rational function multiplied by $(dz)^3$ for a suitable
coordinate $z$.

\subsection{Definition of the differential}

Fix an affine coordinate $z$ on $C_1 \cong \mathbb{P}^1$ such that the node $q$
corresponds to $z=0$ and the point $z=\infty$ is smooth.
Define
\[
\eta_1 := \frac{(dz)^3}{z^3}.
\]
This expression defines a meromorphic section of
$\omega_{C_1}^{\otimes 3}$ with poles only at $z=0$ and $z=\infty$.

\subsection{Local behavior and residues}

Near the node $z=0$, the differential takes the form
\(
\eta_1 = z^{-3}(dz)^3,
\)
which is a pole of order $3$.  By the standard definition of residues for
$k$--differentials, the residue at $z=0$ is
\(
\operatorname{Res}_{z=0}(\eta_1)=1.
\)

To analyze the behavior at infinity, introduce the coordinate $w=1/z$.
Then
\[
dz = -\frac{dw}{w^2},
\qquad
(dz)^3 = -\frac{(dw)^3}{w^6},
\]
and substitution yields
\[
\eta_1
=
-\frac{(dw)^3}{w^3}.
\]
Thus $z=\infty$ is also a pole of order $3$, with residue
\[
\operatorname{Res}_{z=\infty}(\eta_1)=-1.
\]

\subsection{Global residue condition}

The poles of $\eta_1$ on $C_1$ are precisely the node $z=0$ and the point
$z=\infty$, with residues $+1$ and $-1$, respectively.
Consequently,
\[
\sum_{p \in C_1} \operatorname{Res}_p(\eta_1)
=
1 + (-1) = 0,
\]
and the global residue condition is satisfied on the component $C_1$.

\subsection{Interpretation on the nodal curve}

The differential $\eta_1$ is therefore a valid meromorphic $3$--differential on
the irreducible component $C_1$, with a nonzero residue at the node.
In order to obtain a global meromorphic $3$--differential on the nodal curve
$C$, one must equip the second component $C_2$ with a meromorphic
$3$--differential whose residue at the node equals $-1$.
This choice ensures that the residues at the two branches of the node cancel,
in accordance with the residue--balancing principle.

\subsection*{Summary}

We have explicitly constructed a nodal curve $C=C_1 \cup C_2$ together with a
meromorphic $3$--differential
\[
\eta_1 = \frac{(dz)^3}{z^3}
\]
on $C_1 \cong \mathbb{P}^1$, computed its local residues, and verified the global
residue condition.
This example provides a concrete illustration of residue balancing for
higher--order differentials on nodal curves.


\subsection{Examples: Global Constructions and Higher--Order Differentials.}
%
{\it Completion of the nodal example.}
We continue the example of the nodal curve
\[
C = C_1 \cup C_2,
\qquad
C_1 \cong \mathbb{P}^1,
\quad
C_2 \cong \mathbb{P}^1,
\]
where the two components meet transversely at a single node $q$.
On the component $C_1$ we previously defined the meromorphic
$3$--differential
\[
\eta_1 = \frac{(dz)^3}{z^3},
\]
where $z$ is an affine coordinate on $C_1$ with $z=0$ corresponding to the node.
We showed that $\eta_1$ has residue $+1$ at the node and residue $-1$ at
$z=\infty$.

We now construct an explicit meromorphic $3$--differential on $C_2$ so that the
pair $(\eta_1,\eta_2)$ defines a global object satisfying the
residue--balancing condition.

\subsection{A meromorphic $3$--differential on $C_2$}

Choose an affine coordinate $u$ on $C_2 \cong \mathbb{P}^1$ such that the node
$q$ corresponds to $u=0$ and $u=\infty$ is a smooth point.
Define
\[
\eta_2 := -\,\frac{(du)^3}{u^3}.
\]
As before, $\eta_2$ is a meromorphic section of
$\omega_{C_2}^{\otimes 3}$ with poles only at $u=0$ and $u=\infty$.

Near the node $u=0$ we have
\[
\eta_2 = -u^{-3}(du)^3,
\]
which is a pole of order $3$ with residue
\[
\operatorname{Res}_{u=0}(\eta_2) = -1.
\]
At $u=\infty$, introducing the coordinate $v=1/u$ gives
\[
\eta_2 = \frac{(dv)^3}{v^3},
\]
so that
\[
\operatorname{Res}_{u=\infty}(\eta_2) = +1.
\]

\subsection{Verification of residue balancing}

At the node $q$, the two branches correspond to $z=0$ on $C_1$ and $u=0$ on
$C_2$.  The residues satisfy
\[
\operatorname{Res}_{z=0}(\eta_1)
+
\operatorname{Res}_{u=0}(\eta_2)
=
1 + (-1) = 0.
\]
Thus the local residue--balancing condition holds at the node.
Moreover, on each component the sum of residues vanishes:
\[
\sum_{p \in C_1} \operatorname{Res}_p(\eta_1)=0,
\qquad
\sum_{p \in C_2} \operatorname{Res}_p(\eta_2)=0.
\]
Hence the pair $(\eta_1,\eta_2)$ defines a global meromorphic $3$--differential
on the nodal curve $C$ in the sense of the residue--balancing principle.


\subsection{Analogous examples for general $k$--differentials}

The above construction extends directly to meromorphic $k$--differentials for
any integer $k \geq 1$.

Let $C_1 \cong \mathbb{P}^1$ with affine coordinate $z$, and consider
\[
\eta_1^{(k)} := \frac{(dz)^k}{z^k}.
\]
Since
\[
\omega_{\mathbb{P}^1}^{\otimes k} \cong \mathcal{O}_{\mathbb{P}^1}(-2k),
\]
the expression $\eta_1^{(k)}$ defines a meromorphic section of
$\omega_{C_1}^{\otimes k}$ with poles of order $k$ at $z=0$ and $z=\infty$.

Near $z=0$, the local form
\[
\eta_1^{(k)} = z^{-k}(dz)^k
\]
shows that the residue at the node is
\[
\operatorname{Res}_{z=0}\!\left(\eta_1^{(k)}\right)=1.
\]
Passing to the coordinate $w=1/z$ near infinity yields
\[
\eta_1^{(k)} = (-1)^k \frac{(dw)^k}{w^k},
\]
so that
\[
\operatorname{Res}_{z=\infty}\!\left(\eta_1^{(k)}\right)=(-1)^k.
\]
In particular, the global residue condition on $C_1$ is satisfied.

On the second component $C_2 \cong \mathbb{P}^1$, with affine coordinate $u$,
define
\[
\eta_2^{(k)} := -\,\frac{(du)^k}{u^k}.
\]
Then
\[
\operatorname{Res}_{u=0}\!\left(\eta_2^{(k)}\right)=-1,
\qquad
\operatorname{Res}_{u=\infty}\!\left(\eta_2^{(k)}\right)=(-1)^{k+1}.
\]
Once again, the residues at the node cancel:
\[
\operatorname{Res}_{z=0}\!\left(\eta_1^{(k)}\right)
+
\operatorname{Res}_{u=0}\!\left(\eta_2^{(k)}\right)
= 0,
\]
and the global residue condition holds on each component.

\subsection*{Conclusion}

These constructions provide explicit families of meromorphic $k$--differentials
on nodal curves for all $k \geq 1$, illustrating both the global residue theorem
on each irreducible component and the local residue--balancing condition at
nodes.  They serve as concrete local models for degenerations of higher--order
differentials and for compactifications of strata of $k$--differentials.

 \bigskip

\subsection{Examples with Several Components and Several Nodes}

In this sub-section we generalize the previous constructions to nodal curves with
an arbitrary number of components and nodes.  The goal is to give explicit,
computable models of meromorphic $k$--differentials illustrating residue
balancing in a global and transparent way.


\subsection{General setup}

Let $C$ be a connected nodal curve over $\mathbb{C}$ with dual graph $\Gamma$.
We assume that each irreducible component $C_v$ is isomorphic to $\mathbb{P}^1$, nodes correspond bijectively to edges of $\Gamma$, and at each node $q_e$, two components $C_{v^+}$ and $C_{v^-}$ meet
        transversely.
Now 
fix an integer $k \geq 1$.
A meromorphic $k$--differential on $C$ will be given by a collection
\[
\eta = \{\eta_v\}_{v \in V(\Gamma)},
\qquad
\eta_v \in H^0\!\left(C_v,\omega_{C_v}^{\otimes k}(*\{\text{nodes}\})\right),
\]
subject to the residue--balancing condition at each node.


\subsection{Local model at a node}

Let $q_e$ be a node joining components $C_{v^+}$ and $C_{v^-}$.
Choose local coordinates $z$ on $C_{v^+}$ and $u$ on $C_{v^-}$ such that
$q_e$ corresponds to $z=0$ and $u=0$, respectively.

We impose the standard local form
\[
\eta_{v^+} = a_e \frac{(dz)^k}{z^k},
\qquad
\eta_{v^-} = -a_e \frac{(du)^k}{u^k},
\]
where $a_e \in \mathbb{C}$ is a parameter attached to the edge $e$.
Then
\[
\operatorname{Res}_{z=0}(\eta_{v^+}) = a_e,
\qquad
\operatorname{Res}_{u=0}(\eta_{v^-}) = -a_e,
\]
so the residue--balancing condition at $q_e$ is automatically satisfied.


\subsection{Global construction on each component}

Fix a vertex $v \in V(\Gamma)$ and let $C_v \cong \mathbb{P}^1$.
Let
\[
\{q_{e_1}, \dots, q_{e_m}\}
\]
be the nodes incident to $C_v$.
Choose an affine coordinate $z$ on $C_v$ such that these nodes correspond to
distinct points
\[
z = z_1, \dots, z_m \in \mathbb{C},
\]
and such that $z=\infty$ is a smooth point of $C_v$.

Define
\[
\eta_v :=
\left(
\sum_{i=1}^m \frac{a_{e_i}}{(z-z_i)^k}
\right)
(dz)^k.
\]
This expression defines a meromorphic section of
$\omega_{C_v}^{\otimes k}$ with poles of order $k$ at the nodes $z=z_i$ and
possibly at $z=\infty$.


\subsection{Residues and the point at infinity}

Near $z=z_i$ we have
\[
\eta_v = a_{e_i}\frac{(dz)^k}{(z-z_i)^k} + \text{(holomorphic terms)},
\]
so that
\[
\operatorname{Res}_{z=z_i}(\eta_v) = a_{e_i}.
\]

To analyze the behavior at infinity, introduce the coordinate $w=1/z$.
A direct computation shows that
\[
\eta_v =
(-1)^k
\left(
\sum_{i=1}^m a_{e_i}
\right)
\frac{(dw)^k}{w^k}
+ \text{(lower order terms)}.
\]
Thus $z=\infty$ is a pole of order $k$ with residue
\[
\operatorname{Res}_{z=\infty}(\eta_v)
=
(-1)^k
\sum_{i=1}^m a_{e_i}.
\]


\subsection{Global residue condition on each component}

The sum of residues on $C_v$ is therefore
\[
\sum_{p \in C_v} \operatorname{Res}_p(\eta_v)
=
\left(
\sum_{i=1}^m a_{e_i}
\right)
+
(-1)^k
\left(
\sum_{i=1}^m a_{e_i}
\right).
\]
For all $k$, this sum vanishes identically.
Hence the global residue condition holds on each irreducible component.


\subsection{Assembly into a global differential}

Repeating the above construction on every component $C_v$ and using the same
parameter $a_e$ on the two branches of each node $q_e$, with opposite signs,
produces a global meromorphic $k$--differential
\[
\eta = \{\eta_v\}_{v \in V(\Gamma)}
\]
on the entire nodal curve $C$.
By construction:
\begin{enumerate}
  \item residues cancel at every node;
  \item the global residue condition holds on every component;
  \item the construction depends linearly on the edge parameters $a_e$.
\end{enumerate}


\subsection{Examples of dual graphs}

\paragraph{Chains.}
If $\Gamma$ is a chain, the parameters $a_e$ propagate along the chain and
determine all local behavior.  The resulting differentials model degenerations
of differentials with prescribed principal parts.

\paragraph{Trees.}
If $\Gamma$ is a tree, the parameters $\{a_e\}$ are independent, and every
assignment yields a valid global meromorphic $k$--differential.

\paragraph{Graphs with cycles.}
If $\Gamma$ contains cycles, residue balancing imposes no further constraints,
but global geometry may restrict which differentials arise as limits of smooth
curves.


\subsection*{Conclusion}

These constructions provide explicit, coordinate-level models of meromorphic
$k$--differentials on nodal curves with arbitrary combinatorial type.
They illustrate how local principal parts at nodes, together with residue
balancing, completely determine global behavior component by component.


\subsection{An explicit example of a nodal curve with cycle in its dual graph.}
%
We give a concrete example of a connected nodal curve whose dual graph contains
exactly one cycle.  All constructions are carried out explicitly and
line-by-line.


\subsection{Definition of the curve}

Let
\(
C := C_1 \cup C_2 \cup C_3
\)
be a projective curve over $\mathbb{C}$ defined as follows.
\begin{enumerate}
  \item Each irreducible component $C_i$ is isomorphic to $\mathbb{P}^1$.
  \item The components intersect pairwise at three distinct nodes:
  \[
  q_{12} \in C_1 \cap C_2, \qquad
  q_{23} \in C_2 \cap C_3, \qquad
  q_{31} \in C_3 \cap C_1.
  \]
  \item No other intersections occur.
\end{enumerate}

Each intersection is transverse, so all singularities of $C$ are ordinary
nodes.  The curve $C$ is connected and has exactly three nodes.


\subsection{Local coordinates at the nodes}

Choose affine coordinates
\[
z_i \colon C_i \setminus \{\infty\} \longrightarrow \mathbb{C},
\qquad i=1,2,3,
\]
such that the nodes correspond to the following points:

\begin{center}
\begin{tabular}{c|c|c}
Node & Component & Coordinate value \\ \hline
$q_{12}$ & $C_1$ & $z_1 = 0$ \\
$q_{12}$ & $C_2$ & $z_2 = 0$ \\ \hline
$q_{23}$ & $C_2$ & $z_2 = 1$ \\
$q_{23}$ & $C_3$ & $z_3 = 0$ \\ \hline
$q_{31}$ & $C_3$ & $z_3 = 1$ \\
$q_{31}$ & $C_1$ & $z_1 = 1$
\end{tabular}
\end{center}
All remaining points of each $C_i$ are smooth.


\subsection{Verification that $C$ is nodal}

At each node $q_{ij}$, the curve is locally analytically isomorphic to
\(
\{xy = 0\} \subset \mathbb{C}^2,
\)
with one branch coming from $C_i$ and the other from $C_j$.
Hence all singularities of $C$ are ordinary double points, and $C$ is a nodal
curve.
%
%
\noindent The dual graph $\Gamma$ has  three vertices, three edges, and one simple cycle. Graph-theoretically, $\Gamma$ is a triangle.

\subsection{Topological invariants}

Let $g(C)$ denote the arithmetic genus of $C$.
Since each $C_i$ has genus $0$ and $\Gamma$ has one cycle, we have
\[
g(C) = \sum_{i=1}^3 g(C_i) + b_1(\Gamma)  = 1,
\]
where $b_1(\Gamma)$ is the first Betti number of the dual graph.
Thus $C$ has arithmetic genus $1$.


\subsection{Geometric interpretation}

The curve $C$ can be viewed as a degeneration of a smooth elliptic curve into a
cycle of three rational components.
This type of curve appears naturally:
\begin{enumerate}
  \item in the boundary of the moduli space $\overline{\mathcal{M}}_1$,
  \item in stable reduction of elliptic curves,
  \item in compactifications of strata of differentials.
\end{enumerate}


\subsection*{Summary.}

We have explicitly constructed: a connected nodal curve $C = C_1 \cup C_2 \cup C_3$ with each $C_i \cong \mathbb{P}^1$, and with exactly three nodes and with arithmetic genus equal to $1$. Its  dual graph $\Gamma$ with a single cycle.
This example provides a canonical model of a nodal curve whose dual graph
contains exactly one cycle.


\subsection{Explicit meromorphic $k$--differentials on a cyclic nodal curve}

We construct explicit meromorphic $k$--differentials on the nodal curve
\(
C = C_1 \cup C_2 \cup C_3,\) \, and
\(
C_i \cong \mathbb{P}^1,
\)
where the components meet pairwise at the nodes
\[
q_{12} \in C_1 \cap C_2,
\qquad
q_{23} \in C_2 \cap C_3,
\qquad
q_{31} \in C_3 \cap C_1.
\]
The dual graph of $C$ is a single cycle.


\subsection{Choice of coordinates}

Fix affine coordinates $z_i$ on $C_i \cong \mathbb{P}^1$ such that:
\[
\begin{aligned}
q_{12} &: z_1 = 0 \text{ on } C_1, \quad z_2 = 0 \text{ on } C_2, \\
q_{23} &: z_2 = 1 \text{ on } C_2, \quad z_3 = 0 \text{ on } C_3, \\
q_{31} &: z_3 = 1 \text{ on } C_3, \quad z_1 = 1 \text{ on } C_1,
\end{aligned}
\]
and such that $z_i = \infty$ is a smooth point of $C_i$ for each $i$.


\subsection{Local model for a $k$--differential at a node}

Fix complex numbers
\[
a_{12}, \; a_{23}, \; a_{31} \in \mathbb{C},
\]
one for each node.
At a node joining $C_i$ and $C_j$, we impose the standard local form
\[
\eta_i = a_{ij}\,\frac{(dz_i)^k}{(z_i - z_{ij})^k},
\qquad
\eta_j = -a_{ij}\,\frac{(dz_j)^k}{(z_j - z_{ji})^k},
\]
where $z_{ij}$ (resp.\ $z_{ji}$) denotes the coordinate value of the node on
$C_i$ (resp.\ $C_j$).
This ensures
\[
\operatorname{Res}_{q_{ij}}(\eta_i) = a_{ij},
\qquad
\operatorname{Res}_{q_{ij}}(\eta_j) = -a_{ij},
\]
so the residue--balancing condition holds automatically at every node.


\subsection{Definition of the differentials on each component}

Using the above parameters, define meromorphic $k$--differentials on each
component by
\[
\eta_1 :=
\left(
\frac{a_{12}}{z_1^k}
+
\frac{a_{31}}{(z_1-1)^k}
\right)
(dz_1)^k,
\]
\[
\eta_2 :=
\left(
\frac{-a_{12}}{z_2^k}
+
\frac{a_{23}}{(z_2-1)^k}
\right)
(dz_2)^k,
\]
\[
\eta_3 :=
\left(
\frac{-a_{23}}{z_3^k}
+
\frac{-a_{31}}{(z_3-1)^k}
\right)
(dz_3)^k.
\]

Each $\eta_i$ is a meromorphic section of $\omega_{C_i}^{\otimes k}$ with poles
of order $k$ precisely at the nodes on $C_i$ and possibly at infinity.


\subsection{Residues at the nodes}

By construction, the residues at the nodes are:
\[
\begin{aligned}
\operatorname{Res}_{q_{12}}(\eta_1) &= a_{12},
&\operatorname{Res}_{q_{12}}(\eta_2) &= -a_{12}, \\
\operatorname{Res}_{q_{23}}(\eta_2) &= a_{23},
&\operatorname{Res}_{q_{23}}(\eta_3) &= -a_{23}, \\
\operatorname{Res}_{q_{31}}(\eta_3) &= -a_{31},
&\operatorname{Res}_{q_{31}}(\eta_1) &= a_{31}.
\end{aligned}
\]
Thus the local residue--balancing condition holds at every node.


\subsection{Behavior at infinity and global residue condition}

We analyze the behavior of $\eta_i$ at $z_i=\infty$.
Let $w_i = 1/z_i$ be the local coordinate near infinity.
A direct computation yields
\[
\eta_i =
(-1)^k
\left(
\sum_{\text{nodes } q \subset C_i}
\operatorname{Res}_{q}(\eta_i)
\right)
\frac{(dw_i)^k}{w_i^k}
+ \text{(lower order terms)}.
\]
Hence $z_i=\infty$ is a pole of order $k$ with residue
\[
\operatorname{Res}_{z_i=\infty}(\eta_i)
=
(-1)^k
\sum_{\text{nodes } q \subset C_i}
\operatorname{Res}_{q}(\eta_i).
\]

Therefore, the sum of residues on each component satisfies
\[
\sum_{p \in C_i} \operatorname{Res}_p(\eta_i) = 0,
\qquad i=1,2,3,
\]
and the global residue condition holds on every irreducible component.


\subsection{Global interpretation}

The collection
\[
\eta = (\eta_1,\eta_2,\eta_3)
\]
defines a meromorphic $k$--differential on the nodal curve $C$.
The space of such differentials constructed in this way is naturally
parametrized by the triple $(a_{12},a_{23},a_{31}) \in \mathbb{C}^3$.

The presence of the cycle in the dual graph does not obstruct the existence of
meromorphic $k$--differentials satisfying residue balancing, but it plays a
crucial role in determining which such differentials arise as limits of smooth
$differentials$ under degeneration.


\subsection*{Conclusion}

We have explicitly constructed meromorphic $k$--differentials on a nodal curve
whose dual graph contains one cycle, verified residue balancing at every node,
and checked the global residue condition on each component.
These examples serve as concrete local models for degenerations of higher--order
differentials on curves of arithmetic genus one.


\bigskip
\subsection{A cuspidal example of a meromorphic $3$--differential}

We present a clean and self-contained example of a meromorphic $3$--differential
on a curve with a single cuspidal singularity, suitable for inclusion in a
journal article.

\begin{example}[Cuspidal cubic]
\label{ex:cuspidal-3-differential}
Let $C \subset \mathbb{P}^2$ be the plane cubic curve defined by
\[
C:\quad y^2 = x^3 .
\]
The curve $C$ is irreducible and has a unique singular point at $(x,y)=(0,0)$,
which is a cusp.  The arithmetic genus of $C$ is $p_a(C)=1$.
\end{example}

\noindent
Let
\[
\nu \colon \widetilde{C} \longrightarrow C
\]
be the normalization of $C$.  Then $\widetilde{C} \cong \mathbb{P}^1$, and in
affine coordinates the normalization map is given by
\[
x = t^2,
\qquad
y = t^3.
\]
The preimage of the cusp is the single point $t=0 \in \widetilde{C}$.

\medskip

\noindent
\textbf{The dualizing sheaf.}
Since $C$ is singular, its canonical bundle is replaced by the dualizing sheaf
$\omega_C$.  For a unibranch singularity with $\delta$--invariant $\delta=1$,
one has the standard description
\[
\omega_C
\;\cong\;
\nu_*\!\left(\omega_{\mathbb{P}^1}(2\cdot 0)\right).
\]
On the affine chart of $\mathbb{P}^1$ with coordinate $t$, a local generator of
$\omega_C$ is therefore given by
\[
\omega = \frac{dt}{t^2}.
\]
(For  more details see Appendix G.)

\medskip

\noindent
\textbf{A meromorphic $3$--differential.}
Taking the third tensor power, we obtain
\[
\omega_C^{\otimes 3}
\;\cong\;
\nu_*\!\left(\omega_{\mathbb{P}^1}^{\otimes 3}(6\cdot 0)\right).
\]
Accordingly, the cube of the generator $\omega$ defines a meromorphic
$3$--differential
\[
\eta := \omega^{\otimes 3} = \frac{(dt)^3}{t^6}.
\]
This expression has a pole of order $6$ at $t=0$ and is holomorphic elsewhere on
$\widetilde{C}$.

\medskip

\noindent
\textbf{Residue at the cusp.}
For a $k$--differential written locally in the form
\[
\eta =
\left(
a_{-k} t^{-k} + a_{-(k-1)} t^{-(k-1)} + \cdots
\right)(dt)^k,
\]
the residue is defined to be $\operatorname{Res}(\eta)=a_{-k}$.
In the present case,
\[
\eta = t^{-6}(dt)^3
\]
has no $t^{-3}$ term.  Hence the residue of $\eta$ at the cusp vanishes:
\[
\operatorname{Res}_{\mathrm{cusp}}(\eta) = 0.
\]

\medskip

\noindent
\textbf{Remarks.}
\begin{enumerate}
  \item The pole order $6$ is maximal and equals $2k\delta$ with $k=3$ and
        $\delta=1$.
  \item Unlike nodal singularities, a cusp has only one branch, so there is no
        residue--balancing condition.
  \item This example illustrates how $k$--differentials on cuspidal curves are
        naturally described via normalization and the dualizing sheaf.
\end{enumerate}

\vspace{0.1cm}

\subsection{A cuspidal example with nonzero $3$--residue}

We construct explicitly a meromorphic $3$--differential on a curve with a single
cusp whose $3$--residue is \emph{nonzero}.  All steps are written in detail and
carefully justified.


\subsection*{1. The cuspidal curve}

Let
\(
C \subset \mathbb{P}^2
\)
be the plane cubic curve defined by
\(
C:\quad y^2 = x^3 .
\)
The curve $C$ is irreducible and has a unique singular point at $(0,0)$, which is
a cusp.
Its arithmetic genus is
\(
p_a(C) = 1.
\)


\subsection*{2. Normalization}
Let
\(
\nu : \widetilde{C} \longrightarrow C
\)
be the normalization.
Then $\widetilde{C} \cong \mathbb{P}^1$, and on the affine chart
$\widetilde{C}\setminus\{\infty\}$ the normalization map is given by
\(
x = t^2,\) \! and 
\(
y = t^3,
\)
where $t$ is the affine coordinate on $\mathbb{P}^1$.
The unique point lying over the cusp is $t=0$.


\subsection*{3. The dualizing sheaf}

Since $C$ is singular, its canonical bundle is replaced by the dualizing sheaf
$\omega_C$.
For a unibranch singularity with $\delta$--invariant $\delta=1$, one has the
standard description
\[
\omega_C
\;\cong\;
\nu_*\!\left(\omega_{\mathbb{P}^1}(2\cdot 0)\right).
\]

Consequently, the third tensor power satisfies
\[
\omega_C^{\otimes 3}
\;\cong\;
\nu_*\!\left(\omega_{\mathbb{P}^1}^{\otimes 3}(6\cdot 0)\right).
\]

Thus a $3$--differential on $C$ corresponds to a meromorphic section of
$\omega_{\mathbb{P}^1}^{\otimes 3}$ with a pole of order at most $6$ at $t=0$ and
no other poles.


\subsection*{4. Definition of a meromorphic $3$--differential}

Consider the meromorphic $3$--differential on $\mathbb{P}^1$ given by
\(
\widetilde{\eta}
\;:=\;
\dfrac{(dt)^3}{t^3}.
\)
This differential has a pole of order exactly $3$ at $t=0$, and no other poles on $\mathbb{P}^1$.
Since $3 \le 6$, we have
\[
\widetilde{\eta}
\;\in\;
H^0\!\left(
\mathbb{P}^1,
\omega_{\mathbb{P}^1}^{\otimes 3}(6\cdot 0)
\right),
\]
so $\widetilde{\eta}$ defines a global section of $\omega_C^{\otimes 3}$ by
pushforward.


\subsection*{5. The induced $3$--differential on $C$}

Define
\(
\eta := \nu_*(\widetilde{\eta})
\;\in\;
H^0(C,\omega_C^{\otimes 3}).
\)
By construction $\eta$ is a meromorphic $3$--differential on $C$, it is holomorphic on the smooth locus of $C$, and it  has a pole at the cusp.
 

\subsection*{6. Local form near the cusp}

Near the cusp, using the normalization coordinate $t$, the local expression of
$\eta$ is
\[
\widetilde{\eta} = t^{-3}(dt)^3.
\]

This pole order is strictly less than the maximal allowed order $6$ dictated by
the dualizing sheaf, so $\eta$ is a valid section of $\omega_C^{\otimes 3}$.


\subsection*{7. Definition of the residue for $3$--differentials}

For a $k$--differential written locally as
\[
\eta =
\left(
a_{-k} t^{-k} + a_{-(k-1)} t^{-(k-1)} + \cdots
\right)(dt)^k,
\]
the \emph{residue} is defined by
\[
\operatorname{Res}(\eta) := a_{-k}.
\]

This generalizes the usual notion of residue for $1$--forms.


\subsection*{8. Computation of the residue}

In our example,
\(
\widetilde{\eta} = \frac{(dt)^3}{t^3}
=
t^{-3}(dt)^3.
\)
Comparing with the general form, we identify
\(
a_{-3} = 1.
\)
Therefore, the residue of $\eta$ at the cusp is
\[
{
\operatorname{Res}_{\mathrm{cusp}}(\eta) = 1.
}
\]


\subsection*{9. Remarks}

\begin{enumerate}
  \item Cuspidal singularities have a single branch, so there is no
        residue--balancing condition.
  \item Unlike the example $\eta = (dt)^3/t^6$, the present differential has a
        \emph{nonzero} $3$--residue.
  \item Such differentials naturally appear as limits of smooth
        $3$--differentials under degeneration.
\end{enumerate}


\subsection*{Conclusion}

We have constructed an explicit meromorphic $3$--differential on a cuspidal
curve whose residue at the cusp is nonzero:
\(
\operatorname{Res}_{\mathrm{cusp}}(\eta)=1.
\)
This example demonstrates that cuspidal singularities admit genuine residue
data for higher--order differentials, even though no balancing condition is
imposed.


\vspace{0.3cm}
\subsection{Examples with logarithmic differentials}

\begin{example}[Logarithmic abelian differential with a cusp]
Let $(X,M_X)$ be a logarithmic curve whose underlying curve has a cusp.
Consider a logarithmic abelian differential $\eta \in
H^0(X,\omega_X(\log D))$.

Locally near the cusp, $\eta$ has the form
\[
\eta = a \frac{dt}{t}.
\]
\end{example}
\paragraph{\it Geometric interpretation.}
\begin{itemize}
  \item[(i)] The coefficient $a$ is the logarithmic residue.
  \item[(ii)] There is only one branch at the cusp.
  \item[(iii)] There is no local balancing condition.
\end{itemize}
\vspace{0.2cm}
\noindent {\it Global constraint.}
The logarithmic residue theorem implies that the sum of logarithmic
residues over all boundary points (nodes and cusps) must vanish.

\vspace{0.2cm}
\noindent {Conceptual interpretation.}

\begin{itemize}
  \item[(i)] Nodes enforce \emph{local cancellation} of residues.
  \item[(ii)] Cusps behave as \emph{sources or sinks} with no balancing partner.
  \item[(iii)] Failure of balancing obstructs the existence of a global object.
\end{itemize}

 
 \vspace{0.4cm}

\noindent {\it The cusp.}
For the cuspidal cubic
\(
X : y^2 = x^3,
\)
a local generator of the dualizing sheaf $\omega_X$ is
\(
\omega = \dfrac{dx}{y},
\)
and after normalization, this becomes
\(
\omega = \dfrac{2\,dt}{t^2}.
\)
 
\noindent {\it General definition of the dualizing sheaf.}
Let $C$ be a reduced curve (possibly singular).  
The \emph{dualizing sheaf} $\omega_C$ is characterized by the following facts:
\begin{itemize}
  \item[(i)] If $C$ is smooth, then $\omega_C = \Omega_C^1$.
  \item[(ii)] If $C$ is singular, $\omega_C$ is the sheaf of \emph{regular
  differentials}, i.e.\ meromorphic $1$-forms on the normalization that
  satisfy a residue condition at singular points.
\end{itemize}
For plane curve singularities, there is an explicit algebraic description
that we now use.


\vspace{0.3cm}

\noindent {\it Plane curve formula for the dualizing sheaf.}
Let $C \subset \mathbb{A}^2$ be a plane curve defined by a single equation
\(
f(x,y)=0.
\)
\begin{lemma}[Standard formula]
\label{lem:plane}
If $C$ is a reduced plane curve, then the dualizing sheaf $\omega_C$ is
generated locally by
\[
\omega_C = \left\langle \frac{dx}{\partial f/\partial y} \right\rangle
= \left\langle -\frac{dy}{\partial f/\partial x} \right\rangle.
\]
\end{lemma}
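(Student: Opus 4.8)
The plan is to realize $C$ as a Cartier divisor in a smooth ambient surface and to identify $\omega_C$ by adjunction, reading off the claimed generator as a Poincar\'e residue. Write $S=\mathbb{A}^2$ with $\omega_S=\mathcal{O}_S\cdot(dx\wedge dy)$ and let $C=V(f)$, an effective Cartier divisor since $f$ is a nonzerodivisor. The Koszul resolution $0\to\mathcal{O}_S(-C)\xrightarrow{\,s\,}\mathcal{O}_S\to\mathcal{O}_C\to 0$, with $s$ the canonical section cutting out $C$, is a locally free resolution of $\mathcal{O}_C$. Applying $\mathcal{H}om_{\mathcal{O}_S}(-,\omega_S)$ and using that $s$ is injective (so $\mathcal{H}om(\mathcal{O}_C,\omega_S)=0$) yields a canonical isomorphism
\[
\omega_C\;\cong\;\mathcal{E}xt^1_{\mathcal{O}_S}(\mathcal{O}_C,\omega_S)\;\cong\;\bigl(\omega_S\otimes\mathcal{O}_S(C)\bigr)\big|_C ,
\]
where the second isomorphism identifies the Ext sheaf with $\omega_S(C)/\omega_S$. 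Under it the class of $\tfrac{dx\wedge dy}{f}$ generates $\omega_C$ as an $\mathcal{O}_C$-module, because $\tfrac{dx\wedge dy}{f}$ generates $\omega_S(C)$ over $\mathcal{O}_S$. The content of the lemma is then to compute this generator explicitly as a meromorphic differential on the normalization.

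Next I would carry out the Poincar\'e residue computation on the smooth locus $C^{\mathrm{sm}}$. With $f_x=\partial f/\partial x$ and $f_y=\partial f/\partial y$, a direct rearrangement using $df=f_x\,dx+f_y\,dy$ gives, on $\{f_y\neq 0\}$,
\[
\frac{dx\wedge dy}{f}=\frac{dx}{f_y}\wedge\frac{df}{f},
\]
so that, with the orientation convention $\operatorname{Res}_C\!\bigl(\alpha\wedge\tfrac{df}{f}\bigr)=\alpha|_C$, the residue equals $\dfrac{dx}{f_y}$. Symmetrically, on $\{f_x\neq 0\}$ one finds $\dfrac{dx\wedge dy}{f}=-\dfrac{dy}{f_x}\wedge\dfrac{df}{f}$, with residue $-\dfrac{dy}{f_x}$. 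On the overlap the identity $df|_C=f_x\,dx+f_y\,dy=0$ gives $\dfrac{dx}{f_y}=-\dfrac{dy}{f_x}$, so the two local residues agree and patch to the single meromorphic form which is the Poincar\'e residue of the global section $\tfrac{dx\wedge dy}{f}$. Combined with the first paragraph, this shows that on $C^{\mathrm{sm}}$ the sheaf $\omega_C$ is generated by either $\dfrac{dx}{f_y}$ or $-\dfrac{dy}{f_x}$, and that the two expressions coincide there.

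Finally I would address the singular points, which I expect to be the only delicate part. At a singular point of $C$ one has $f_x=f_y=0$, so the fractions $\tfrac{dx}{f_y}$ and $-\tfrac{dy}{f_x}$ are merely meromorphic there and the chart computation breaks down; moreover $\operatorname{Sing}(C)$ is of codimension one in the curve, so the identification cannot be extended by a Hartogs-type argument on $C$ itself. The resolution is that the intrinsic description is already supplied by the Ext/adjunction isomorphism of the first paragraph, which produces $\omega_C$ on all of $C$ independently of smoothness, with generator the class of $\tfrac{dx\wedge dy}{f}$; the explicit fractions are then understood, via the inclusion $\omega_C\hookrightarrow\nu_*\omega_{\widetilde{C}}$, as the images of that generator on the smooth locus, realized as regular (Rosenlicht) differentials across the finitely many singularities. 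As a consistency check one verifies the cuspidal case $f=y^2-x^3$: here $\tfrac{dx}{f_y}=\tfrac{dx}{2y}$, and pulling back along $x=t^2,\ y=t^3$ gives $\tfrac{2t\,dt}{2t^3}=\tfrac{dt}{t^2}$, in agreement with the local generator recorded earlier. The main obstacle is thus precisely the reconciliation of the explicit meromorphic formula with the intrinsic dualizing sheaf at the singular points; once the Ext description is invoked, everything reduces to the residue computation on the dense smooth locus.
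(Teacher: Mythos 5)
Your proof is correct, and it takes a genuinely different route from the paper's. The paper's own argument (Appendix G) is purely a K\"ahler-differential computation: it writes down the conormal presentation $A\cdot df \to A\,dx\oplus A\,dy \to \Omega^1_A\to 0$ for $A=\mathbb{C}[x,y]/(f)$, uses that $\partial f/\partial y$ is a unit on the smooth locus to solve the single relation for $dy$, and concludes that $\frac{dx}{\partial f/\partial y}$ generates $\Omega^1_{C}=\omega_C$ there; the behavior at singular points is only verified in the cuspidal example via the pushforward description $\omega_C\cong\nu_*(\omega_{\mathbb{P}^1}(2\cdot 0))$, not in general. Your argument instead realizes the "Grothendieck duality" assertion that the paper's accompanying remark merely cites: the Koszul resolution gives $\omega_C\cong\mathcal{E}xt^1_{\mathcal{O}_S}(\mathcal{O}_C,\omega_S)\cong\bigl(\omega_S(C)\bigr)\big|_C$ with generator the class of $\frac{dx\wedge dy}{f}$, and the explicit fractions arise as Poincar\'e residues, patching on overlaps because $df|_C=0$. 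What each approach buys: the paper's computation is elementary and self-contained but only proves the formula where $\omega_C$ coincides with $\Omega^1_C$, which is precisely where the lemma has no new content; your Ext/adjunction argument establishes invertibility and the generator on all of $C$, including the singular points, and correctly identifies the meromorphic expressions as the image of that intrinsic generator under $\omega_C\hookrightarrow\nu_*\omega_{\widetilde{C}}$ (the Rosenlicht picture). The one place to be careful is the final identification of the Ext class with the residue form across the singular locus — you flag this honestly, and invoking the standard compatibility of the adjunction isomorphism with Poincar\'e residue on the dense smooth locus does close it, since both sides are generators of the same invertible sheaf and agree on a dense open set.
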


\begin{remark}
This formula follows from Grothendieck duality and can be found in standard
references on dualizing sheaves and plane curve singularities.
\end{remark}

\noindent {\it Application of the formula to the cuspidal cubic.}
Consider
\(
f(x,y) = y^2 - x^3.
\)
Compute the partial derivatives:
\(
\dfrac{\partial f}{\partial y} = 2y,
\qquad
\dfrac{\partial f}{\partial x} = -3x^2.
\)
By Lemma~\ref{lem:plane}, a generator of $\omega_C$ is
\(
\omega = \dfrac{dx}{2y}.
\)
Up to multiplication by a nonzero constant (which does not affect the
line bundle), this is the same as
\(
\omega = \dfrac{dx}{y}.
\)
This explains the first equality.

\noindent {\it Normalization and the parameter $t$.}
The normalization map $\nu : \widetilde{C} \to C$ is given by
\(
x = t^2\) and \( y = t^3,
\)
where $\widetilde{C} \cong \mathbb{A}^1$ with coordinate $t$.
Compute:
\(
dx = 2t\,dt,\) and \(
y = t^3.
\)
After substitute into $\omega = \dfrac{dx}{y}$, we obtain 
\(
\nu^*\omega
=
\dfrac{2t\,dt}{t^3}
=
\dfrac{2\,dt}{t^2}.
\)
Thus, on the normalization:
\[
\omega = \frac{2\,dt}{t^2}.
\]
This completes the derivation of the second equality.
%
Hence, $\omega = \dfrac{dx}{y}$ is a \emph{canonical generator} of the
dualizing sheaf near the cusp.

\medskip
\noindent {\it Conceptual interpretation.}
For smooth points, $\omega_C$ consists of holomorphic $1$--forms, for nodes, regular differentials have opposite residues on branches, and for cusps, regular differentials have a single logarithmic pole.
The appearance of $\dfrac{dt}{t}$ is therefore not accidental: cusps behave
like logarithmic boundary points.


\section{Residue Balancing with Arbitrary Singularities}

In the case where the divisor has only nodal singularities, the residue map admits
a particularly transparent description, and the resulting Hodge-theoretic
contributions can be controlled by local normal crossing models. This simplicity
allows one to relate the residue directly to global deformation data and to
establish clean rank statements.

\medskip

When higher singularities are present, the situation becomes more subtle. The
local structure of the singularities contributes additional correction terms to
the residue, reflecting the failure of normal crossings and the appearance of
nontrivial local cohomology. Passing from nodes to higher singularities therefore
requires a refined analysis of local-to-global interactions, but the nodal case
serves as a guiding model for understanding how residues deform and how their
Hodge-theoretic behavior is modified.
 


\vspace{0.3cm}

In the nodal case, each singular point has exactly two smooth branches, and
locally the curve is analytically isomorphic to
\(
xy = 0.
\)
This simple normal-crossing structure guarantees two key facts:
\begin{enumerate}
  \item residues are well-defined and simple on each branch;
  \item the dualizing sheaf near a node is described by simple poles with
  opposite residues on the two branches.
\end{enumerate}
As a result, the local balancing condition
\[
\operatorname{Res}_{q_e^+}(\eta_{v^+}) +
\operatorname{Res}_{q_e^-}(\eta_{v^-}) = 0
\]
is \emph{exactly equivalent} to the condition that the differential descends to
a global section of the dualizing sheaf.

\vspace{0.2cm}

For cusps, this equivalence breaks down if one only considers simple residues.

\vspace{0.2cm}

\noindent {\it What goes wrong for cusps.}
Let $p\in  C$ be a cusp, for example analytically given by
\(
y^2 = x^3.
\)
The normalization map $\nu\colon \widetilde{C}\to C$ has a \emph{single} branch
lying over $p$. Therefore,
\begin{itemize}
  \item[(i)] there is no second branch with which to balance residues;
  \item[(ii)] the classical notion of residue at a simple pole is insufficient;
  \item[(iii)] the dualizing sheaf allows higher-order poles whose principal parts are
  constrained by the conductor ideal.
\end{itemize}
Thus, the naive local balancing condition at nodes has no direct analogue for
cusps.

\vspace{0.2cm}
\noindent {\it The right replacement: conductor-level balancing.}
Let $\nu\colon \widetilde{C}\to C$ be the normalization of a reduced curve $C$
with arbitrary singularities, and let $\mathfrak{c}\subset \mathcal{O}_C$ be the
conductor ideal. The dualizing sheaf satisfies
\[
\nu_*\omega_{\widetilde{C}} \;=\; \omega_C(\mathfrak{c}),
\]
and global sections of $\omega_C$ correspond to meromorphic differentials on
$\widetilde{C}$ whose principal parts at points lying over singularities are
annihilated by the conductor.

This leads to the correct generalization of residue balancing.

\subsection{Residue Balancing with Arbitrary Singularities.}
 
 Let us start by this definition:

\begin{definition}[Conductor ideal]
The \emph{conductor ideal} of $C$ is the largest ideal sheaf
\(
\mathfrak{c} \subset \mathcal{O}_C
\)
that is also an ideal of $\nu_*\mathcal{O}_{\widetilde{C}}$. Equivalently,
$\mathfrak{c}$ is the annihilator of the $\mathcal{O}_C$-module
$\nu_*\mathcal{O}_{\widetilde{C}} / \mathcal{O}_C$.
\end{definition}

\vspace{0.1cm}
In case where the given curve 
contains arbitrary singularity we have the following theorem:
\vspace{0.2cm}

\begin{theorem}[Residue balancing with arbitrary singularities]
Let $C$ be a connected reduced curve over $\mathbb{C}$, possibly admitting cusps
or more general singularities, and let
\[
\nu \colon \widetilde{C} \longrightarrow C
\]
be its normalization. For each point $p \in \widetilde{C}$ lying over a singular
point of $C$, let $\eta_p$ denote the principal part of a meromorphic
$k$--differential at $p$.

A collection $\{\eta_p\}$ descends to a global section of the dualizing sheaf
$\omega_C$ if and only if, for every singular point $x \in C$, the sum of the
residues induced on the local branches over $x$, weighted by the corresponding
conductor multiplicities, vanishes.
\end{theorem}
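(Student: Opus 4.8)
The plan is to reduce the assertion to a purely local problem at each singular point and then to read off the descent criterion from the conductor description of $\omega_C$. \textbf{Localization.} Since $\nu$ is an isomorphism over the smooth locus $C\setminus\Sigma$, a collection of principal parts $\{\eta_p\}$ imposes no condition away from $\Sigma$; hence it suffices to decide, at each singular point $x\in\Sigma$ with branches $\nu^{-1}(x)=\{p_1,\dots,p_r\}$, when the germ $(\eta_p)_{p\in\nu^{-1}(x)}$ extends to a section of the stalk $\omega_{C,x}^{\otimes k}$. First I would fix such an $x$ and pass to the completed local ring, so that the branches become power series rings in uniformizers $t_p$.

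\textbf{Local conductor criterion.} The key input is the stated relation $\nu_*\omega_{\widetilde C}=\omega_C(\mathfrak c)$ together with the description of $\omega_C$ as the sheaf of meromorphic differentials on $\widetilde C$ whose poles are bounded by the conductor and which satisfy the Rosenlicht residue conditions. For $k=1$ the latter read: a germ $\omega$ on the branches lies in $\omega_{C,x}$ if and only if $\sum_{p\in\nu^{-1}(x)}\operatorname{Res}_p(f\,\omega)=0$ for every $f\in\mathcal O_{C,x}$. Among differentials whose poles are bounded by $\mathfrak c$ (equivalently, whose principal parts are annihilated by the conductor, so that $\mathfrak c\cdot\omega\subset\nu_*\omega_{\widetilde C}$ is holomorphic) the pairing vanishes automatically once $f\in\mathfrak c_x$, so only finitely many linear conditions survive, indexed by a basis of $\mathcal O_{C,x}/\mathfrak c_x$; their number equals the local colength $\delta_x$. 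Expanding $\sum_p\operatorname{Res}_p(f\,\omega)=0$ in the uniformizers $t_p$ and recording the conductor exponent $c_p$ on each branch (so that $\mathfrak c$ pulls back to $\mathfrak m_p^{c_p}$), I would show that the restriction of $f$ to the branches supplies exactly the weights attached to the branch residues, so that the surviving conditions are precisely the asserted balancing of residues weighted by the conductor multiplicities. As consistency checks, for a node $c_{p^\pm}=1$ and the single surviving equation is $\operatorname{Res}_{p^+}(\omega)+\operatorname{Res}_{p^-}(\omega)=0$, recovering the residue--balancing principle, while for a unibranch singularity such as the cusp $y^2=x^3$ there is no balancing partner and the surviving conditions are the pole--order bounds dictated by the conductor, in agreement with the explicit cuspidal examples computed above.

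\textbf{Global assembly.} With the local conditions in hand at every $x\in\Sigma$, I would construct the global section as follows. By Mittag--Leffler on the smooth projective curve $\widetilde C$, the prescribed principal parts are realized by an honest meromorphic $k$--differential precisely when the componentwise residue theorem is satisfied, and the residue--balancing principle established above turns that global obstruction into the same local data already verified. The resulting differential lies in $\omega_C^{\otimes k}$ because the conductor conditions hold at each point of $\Sigma$ and no condition is imposed on the smooth locus, yielding the section of $\omega_C^{\otimes k}$ sought. The converse is immediate: restricting a global section of $\omega_C^{\otimes k}$ to the branches forces the local conductor conditions at every singular point.

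\textbf{Expected obstacle.} I expect the crux to be the identification carried out in the second step, namely making the phrase ``weighted by the corresponding conductor multiplicities'' precise for an arbitrary, possibly non--planar or non--Gorenstein, singularity. One must prove that the pairing matrix between $\mathcal O_{C,x}/\mathfrak c_x$ and the space of admissible principal parts is governed entirely by the conductor filtration, which requires a careful branchwise choice of uniformizers and control of the numerical semigroup of the singularity. A secondary difficulty is the passage from $k=1$ to general $k$: Rosenlicht duality is tailored to $\omega_C$, so for $\omega_C^{\otimes k}$ one must reinterpret the conditions as constraints on the leading principal parts of the $k$--differential, or apply the $k=1$ analysis to a suitable conductor twist, and check that no constraints arise beyond those counted by the conductor.
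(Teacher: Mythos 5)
Your proposal follows essentially the same route as the paper's proof: localize at each singular point, identify $\omega_{C,x}$ with $\operatorname{Hom}_{\mathcal{O}_{C,x}}(\nu_*\mathcal{O}_{\widetilde C,x},\omega_{\widetilde C,x})$ via duality for the finite map $\nu$, and read off descent from the Rosenlicht-type pairing condition $\sum_{p\in\nu^{-1}(x)}\operatorname{Res}_p(f\cdot\eta)=0$ for all $f\in\mathcal{O}_{C,x}$, with the conductor controlling which principal parts are admissible. Your global assembly via Mittag--Leffler is an addition the paper does not make explicit (the paper treats the statement as purely a descent criterion for a differential already given on $\widetilde C$), but it is harmless.

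The one substantive difference is exactly the point you flag as the expected obstacle, and you are right to flag it. You correctly observe that the surviving pairing conditions are indexed by a basis of $\mathcal{O}_{C,x}/\mathfrak{c}_x$, hence number $\dim_k\mathcal{O}_{C,x}/\mathfrak{c}_x$ per singular point, whereas the theorem as stated asks for a \emph{single} conductor-weighted residue relation at each $x$. The paper's proof collapses the full system to one relation $\sum_i c_i\operatorname{Res}_{p_i}(\eta)=0$ by asserting that every $f\in\mathcal{O}_{C,x}$ vanishes to order at least $c_i$ on each branch; this is false (constants do not vanish), and the collapse genuinely fails whenever $\dim_k\mathcal{O}_{C,x}/\mathfrak{c}_x>1$: for a tacnode there are two independent conditions on the principal parts, which no single linear relation among the two branch residues can encode, and the paper's own Example~3 in Section~7 exhibits exactly this. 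So your instinct not to take the ``single weighted relation'' formulation at face value, and instead to keep the full pairing against $\mathcal{O}_{C,x}/\mathfrak{c}_x$, is the correct reading; if you carry out your second step honestly you will prove a corrected version of the theorem (descent $\Leftrightarrow$ the full system of conductor-filtered pairing conditions), not the literal statement. Your secondary worry about $k>1$ is also legitimate: the paper's proof silently works with $\omega_C$ rather than $\omega_C^{\otimes k}$, and neither the paper nor your sketch supplies the reinterpretation of the pairing for $k$-differentials.
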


\medskip

\begin{proof}
Let $\nu \colon \widetilde{C} \to C$ be the normalization of the reduced curve
$C$. The dualizing sheaf $\omega_C$ is defined as the coherent sheaf representing
Grothendieck duality for $C$. A fundamental property of the dualizing sheaf on a
reduced curve is the existence of a natural inclusion
\[
\omega_C \hookrightarrow \nu_*\omega_{\widetilde{C}},
\]
which is an isomorphism away from the singular locus of $C$. Consequently, any
section of $\omega_C$ may be viewed as a meromorphic differential on the smooth
curve $\widetilde{C}$, subject to additional compatibility conditions at points
lying above singularities of $C$.

The discrepancy between $C$ and its normalization is measured by the conductor
ideal. Let $\mathcal{O}_C$ and $\mathcal{O}_{\widetilde{C}}$ denote the structure
sheaves of $C$ and $\widetilde{C}$, respectively. The conductor ideal
$\mathfrak{c} \subset \mathcal{O}_C$ is defined by
\[
\mathfrak{c}
=
\mathrm{Ann}_{\mathcal{O}_C}
\bigl(\nu_*\mathcal{O}_{\widetilde{C}} / \mathcal{O}_C\bigr),
\]
and may equivalently be described as the largest ideal of $\mathcal{O}_C$ that is
also an ideal of $\nu_*\mathcal{O}_{\widetilde{C}}$. Intuitively, the conductor
records the order to which functions on $C$ must vanish in order to lift to
regular functions on the normalization.

Fix a singular point $x \in C$, and let
\[
\nu^{-1}(x) = \{p_1,\dots,p_r\}
\]
be the set of points of $\widetilde{C}$ lying over $x$, corresponding to the
local branches of $C$ at $x$. Denote by $\mathcal{O}_{C,x}$ the local ring of $C$
at $x$, and by $\mathcal{O}_{\widetilde{C},p_i}$ the local ring of $\widetilde{C}$
at $p_i$. By the local description of the dualizing sheaf,
\[
\omega_{C,x}
=
\mathrm{Hom}_{\mathcal{O}_{C,x}}
\bigl(\nu_*\mathcal{O}_{\widetilde{C},x}, \omega_{\widetilde{C},x}\bigr),
\]
where $\omega_{\widetilde{C}}$ is the canonical bundle on the smooth curve
$\widetilde{C}$. Thus, a local section of $\omega_C$ at $x$ corresponds to an
$\mathcal{O}_{C,x}$--linear functional on $\nu_*\mathcal{O}_{\widetilde{C},x}$
with values in $\omega_{\widetilde{C},x}$.

Now consider a collection of principal parts $\{\eta_{p_i}\}$ of meromorphic
$k$--differentials at the points $p_i \in \widetilde{C}$. Such a collection
determines a meromorphic differential $\eta$ on $\widetilde{C}$ whose poles are
supported entirely on the preimage of the singular locus of $C$. The problem is
to characterize when $\eta$ lies in the image of $\omega_C$ under the inclusion
$\omega_C \subset \nu_*\omega_{\widetilde{C}}$, that is, when $\eta$ descends to a
global section of $\omega_C$.

Grothendieck duality provides a concrete criterion for this descent. The
differential $\eta$ defines a functional on $\nu_*\mathcal{O}_{\widetilde{C},x}$
by residue pairing, and this functional is $\mathcal{O}_{C,x}$--linear if and
only if, for every local function $f \in \mathcal{O}_{C,x}$, one has
\[
\sum_{i=1}^r \mathrm{Res}_{p_i}\bigl(f \cdot \eta\bigr) = 0.
\]
This condition ensures that multiplication by $f$ may be pulled outside the
pairing, which is precisely the requirement for $\eta$ to define a section of
$\omega_C$.

The conductor ideal now enters naturally into the picture. Any function
$f \in \mathcal{O}_{C,x}$ pulls back to a function on each branch
$\mathcal{O}_{\widetilde{C},p_i}$ that vanishes to order at least $c_i$, where
$c_i$ denotes the conductor exponent of the branch corresponding to $p_i$. As a
result, when computing the residues of $f \cdot \eta$, only the leading terms of
$\eta$ contribute, and these contributions are weighted by the respective
conductor multiplicities. The above residue condition is therefore equivalent to
the single weighted residue relation
\[
\sum_{i=1}^r c_i \cdot \mathrm{Res}_{p_i}(\eta) = 0.
\]

If this weighted sum of residues vanishes for every singular point $x \in C$,
then the residue pairing condition holds for all $f \in \mathcal{O}_{C,x}$, and
hence $\eta$ defines a section of $\omega_C$. Conversely, if $\eta$ descends to a
section of $\omega_C$, then the residue pairing must vanish for all local
functions, and in particular for generators of the conductor ideal, which forces
the weighted residue condition above. This establishes the claimed equivalence
and completes the proof.
\end{proof}

\vspace{0.1cm}

\begin{remark}
The theorem shows that for curves with arbitrary singularities, the classical
residue balancing condition must be refined by incorporating conductor weights.
Nodes correspond to the case where all conductor weights are equal to $1$,
recovering the usual residue cancellation condition.
\end{remark}

  
\noindent {\it Geometric and analytic Iinterpretation.}

\begin{itemize}
  \item[(i)] For nodes, the conductor condition reduces exactly to the classical
  requirement that the two residues cancel.
  \item[(ii)] For cusps, the condition becomes a vanishing constraint on the residue
  of the coefficient of $t^{-1}$ in the normalized parameter, together with
  additional constraints on higher-order terms.
  \item[(iii)] Thus, cusps contribute \emph{internal balancing conditions} rather than
  pairwise ones.
\end{itemize}

\vspace{0.3cm}

\noindent {\bf Relation to tropical geometry.}
In tropical geometry, vertices corresponding to higher-valent or singular
points carry weights and impose balancing conditions involving \emph{all}
incident edges. Cusps correspond to vertices with nontrivial genus or
self-interaction, where balancing involves internal constraints rather than
edge-to-edge cancellation.

The conductor-weighted residue condition is precisely the algebraic analogue of
this phenomenon.

\subsection*{Conclusion}

\begin{enumerate}
  \item The original residue--balancing theorem is \emph{exactly correct} for
  nodal curves.
  \item For curves with cusps or worse singularities, it must be modified by
  replacing node-wise balancing with {\em conductor-level balancing} on the
  normalization.
  \item With this modification, the theorem remains valid and continues to
  express maximal variation and global residue balance.
\end{enumerate}

\begin{remark}
From a conceptual point of view, nodal curves are precisely those for which
residue balancing is purely pairwise. More complicated singularities introduce
higher-order balancing constraints encoded by the dualizing sheaf.
\end{remark}

\vspace{0.1cm}


\subsection*{Conceptual meaning}

The conductor-weighted residue condition expresses the precise obstruction to
gluing local meromorphic differentials into a global dualizing section. It is
the correct generalization of:
\begin{enumerate}
  \item the residue theorem on smooth Riemann surfaces;
  \item residue cancellation at nodes;
  \item balancing conditions in tropical geometry.
\end{enumerate}

\begin{remark}
From this perspective, residue balancing is not an ad hoc condition but a
structural consequence of duality theory. The conductor records how much
singularity is present, and the weighted residue sum records exactly the amount
by which a meromorphic differential fails to descend globally.
\end{remark}
 
\vspace{0.1cm}



\section{Examples: Node, Cusp, and Tacnode}
\label{subsec:examples}

In this section we illustrate conductor-level balancing by three explicit
local examples. In each case we describe the normalization, compute the
conductor, analyze meromorphic differentials on the normalization, and explain
precisely how the conductor condition characterizes those that descend to the
dualizing sheaf of the singular curve.

\subsubsection*{Example 1: A Node}
Let
\(
C = \operatorname{Spec} A,\) and 
\(
A = k[x,y]/(xy),
\)
the local ring of a nodal curve at the origin.
The normalization is
\(
\widetilde{X}
=
\operatorname{Spec} B,
\) and \(
B = k[u] \oplus k[v],
\)
via $x=u$, $y=0$ on the first branch and $x=0$, $y=v$ on the second branch.
\vspace{0.2cm}
The conductor is
\[
\mathfrak{c} = (x,y) \subset A,
\]
which corresponds to the maximal ideal at the node. Under the inclusion
$A \hookrightarrow B$, this ideal maps to $(u)\oplus(v)$.\\

On $\widetilde{C}$, a meromorphic differential with possible poles at the
preimages of the node has the form
\[
\eta
=
\left( a\,\frac{du}{u} + \text{holomorphic} \right)
\;\oplus\;
\left( b\,\frac{dv}{v} + \text{holomorphic} \right),
\]
with $a,b \in k$.

\medskip

\noindent {\it Conductor condition.}
Multiplication by $\mathfrak{c}$ annihilates the principal parts if and only if
\[
a + b = 0.
\]
This is exactly the classical residue-balancing condition at a node.

\noindent {\it Conclusion.}
Thus, in the nodal case, conductor-level balancing coincides with node-wise
residue balancing and removes exactly one degree of freedom, yielding maximal
variation.

\subsubsection*{Example 2: A Cusp}
Let
\(
C = \operatorname{Spec} A,\) and 
\(
A = k[x,y]/(y^2 - x^3),
\)
the local ring of a cuspidal curve at the origin.
The normalization is
\(
\widetilde{C} = \operatorname{Spec} k[t],\) with
\(
x = t^2,\) and  \( y = t^3.
\)
One computes its conductor
\[
\mathfrak{c} = (x,y) \subset A,
\]
which corresponds, in $k[t]$, to the ideal $(t^2)$.
A general meromorphic differential on $\widetilde{X}$ with a pole at $t=0$ is
\[
\eta
=
\left(
\frac{a}{t^2} + \frac{b}{t} + \text{holomorphic}
\right) dt,
\qquad a,b \in k.
\]

\noindent {\it Conductor condition.}
Multiplication by $\mathfrak{c} = (t^2)$ annihilates the principal part if and
only if the coefficient of $t^{-2}dt$ vanishes. Thus $a=0$, while the
$t^{-1}dt$ term is allowed.

\medskip

\paragraph{\it Interpretation.}
The residue alone does not detect the obstruction to descent: the term
$t^{-2}dt$ has zero residue but does not descend to $C$. The conductor
eliminates precisely this term and no more.

\medskip

\paragraph{\it Conclusion.}
Conductor-level balancing removes exactly the non-descending principal part,
retains the correct dimension, and hence expresses maximal variation, whereas
residue vanishing alone would be insufficient.

\subsubsection*{Example 3: A Tacnode}
Let
\(
C = \operatorname{Spec} A,\)
with \(
A = k[x,y]/(y^2 - x^4),
\)
the local ring of a tacnodal curve.
The normalization has two branches:
\[
\widetilde{C}
=
\operatorname{Spec}(k[t] \oplus k[s]),
\qquad
x = t^2 = s^2,\quad y = t^4 = -s^4.
\]
The conductor is
\[
\mathfrak{c} = (x^2, y) \subset A,
\]
which maps to $(t^4)\oplus(s^4)$ in the normalization.
A meromorphic differential with poles at the preimages of the tacnode is of the
form
\[
\eta
=
\left(
\frac{a_1}{t^3} + \frac{a_2}{t^2} + \frac{a_3}{t}
+ \text{holomorphic}
\right) dt
\;\oplus\;
\left(
\frac{b_1}{s^3} + \frac{b_2}{s^2} + \frac{b_3}{s}
+ \text{holomorphic}
\right) ds.
\]

\paragraph{\it Conductor condition.}
Multiplication by $(t^4)\oplus(s^4)$ annihilates all principal parts of order
$\leq 3$. Thus, all terms of order $t^{-3}, t^{-2}$ and $s^{-3}, s^{-2}$ must
vanish, while simple poles remain subject to a single linear relation.

\medskip

\paragraph{\it Interpretation.}
Unlike the nodal case, higher-order principal parts obstruct descent. The
conductor removes precisely those parts, while leaving exactly the expected
number of degrees of freedom.

\medskip

\paragraph{\it Conclusion.}
The tacnode illustrates that conductor-level balancing generalizes residue
balancing by incorporating higher-order data, thereby restoring maximal
variation.

\subsubsection*{Overall Conclusion}

These examples demonstrate that:
\begin{enumerate}
  \item for nodes, the conductor reproduces classical residue balancing;
  \item for cusps, it detects higher-order obstructions invisible to residues;
  \item for tacnodes, it controls multiple branches and higher-order poles
  simultaneously.
\end{enumerate}
In all cases, conductor-level balancing removes exactly the non-descending
principal parts and preserves maximal variation.

 
\subsection{Globalization to Projective Curves with Multiple Singularities}
\label{subsec:globalization}

In this subsection we reformulate the global theory in a precise axiomatic
structure consisting of definitions, lemmas, propositions, and theorems. The
goal is to globalize the local conductor-level analysis to arbitrary reduced
projective curves and to show that the conductor provides a complete and
uniform descent criterion for dualizing differentials.

\bigskip

\noindent {\bf Global setup.}

\begin{definition}[Global geometric data]
Let $k$ be an algebraically closed field and let $X$ be a reduced projective
curve over $k$. Let
\(
\nu \colon \widetilde{C} \longrightarrow C
\)
denote the normalization morphism. We write
\(
\Sigma = \{x_1,\dots,x_r\} \subset C
\)
for the finite set of singular points of $C$. For each $x_i \in \Sigma$, we
denote by
\(
\nu^{-1}(x_i) = \{p_{i1},\dots,p_{i n_i}\}
\)
the set of points of $\widetilde{C}$ lying above $x_i$.
\end{definition}

\vspace{0.2cm}

\begin{lemma}[Support and decomposition of the conductor]
The conductor ideal $\mathfrak{c}$ is supported precisely on the singular locus
$\Sigma$. Moreover, it decomposes as
\[
\mathfrak{c}
=
\bigcap_{i=1}^r \mathfrak{c}_{x_i},
\]
where $\mathfrak{c}_{x_i}$ denotes the stalk of $\mathfrak{c}$ at the singular
point $x_i$.
\end{lemma}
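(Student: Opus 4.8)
The plan is to work directly from the defining property $\mathfrak{c} = \operatorname{Ann}_{\mathcal{O}_C}(\mathcal{F})$, where $\mathcal{F} := \nu_*\mathcal{O}_{\widetilde{C}}/\mathcal{O}_C$, and to analyze this annihilator locally. Since $\nu$ is finite, $\nu_*\mathcal{O}_{\widetilde{C}}$ is a coherent $\mathcal{O}_C$-module, hence so is $\mathcal{F}$, and therefore $\mathfrak{c}$ is a coherent ideal sheaf. Both assertions---support and decomposition---can be checked on stalks, so I would reduce immediately to the local rings $\mathcal{O}_{C,x}$.

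First I would prove the support statement. Over the smooth locus $C \setminus \Sigma$ the normalization $\nu$ is an isomorphism, so $\mathcal{F}$ vanishes there and $\operatorname{Ann}(\mathcal{F}_x) = \mathcal{O}_{C,x}$; thus $\mathfrak{c}$ agrees with $\mathcal{O}_C$ away from $\Sigma$. Conversely, at a singular point $x_i$ the reduced one-dimensional local ring $\mathcal{O}_{C,x_i}$ fails to be normal (normality in dimension one being regularity), so the inclusion $\mathcal{O}_{C,x_i} \hookrightarrow (\nu_*\mathcal{O}_{\widetilde{C}})_{x_i}$ into its integral closure is strict and $\mathcal{F}_{x_i} \neq 0$. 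A nonzero finitely generated module over a local ring cannot be annihilated by a unit, so $\mathfrak{c}_{x_i} = \operatorname{Ann}(\mathcal{F}_{x_i}) \subsetneq \mathcal{O}_{C,x_i}$. Hence the cosupport of $\mathfrak{c}$, namely the support of $\mathcal{O}_C/\mathfrak{c}$ (equivalently the locus where $\mathfrak{c}$ differs from $\mathcal{O}_C$), is exactly $\Sigma$, as claimed.

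For the decomposition I would exploit that $\mathcal{O}_C/\mathfrak{c}$ is a coherent sheaf whose support is the finite set $\Sigma$ of distinct closed points. Any coherent sheaf with zero-dimensional support splits canonically as the direct sum of the skyscraper sheaves given by its stalks, $\mathcal{O}_C/\mathfrak{c} \cong \bigoplus_{i=1}^r (\iota_{x_i})_*\bigl((\mathcal{O}_C/\mathfrak{c})_{x_i}\bigr)$, since there are no gluing data or nontrivial extensions between sheaves concentrated at distinct points. Writing $\mathfrak{c}_{x_i}$ for the ideal sheaf defined as the kernel of the composite $\mathcal{O}_C \twoheadrightarrow \mathcal{O}_C/\mathfrak{c} \twoheadrightarrow (\iota_{x_i})_*\bigl((\mathcal{O}_C/\mathfrak{c})_{x_i}\bigr)$---that is, the ideal agreeing with $\mathfrak{c}$ near $x_i$ and with $\mathcal{O}_C$ near every other singular point---a local section lies in $\mathfrak{c}$ if and only if it maps to zero in each summand, i.e.\ lies in every $\mathfrak{c}_{x_i}$. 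This yields $\mathfrak{c} = \bigcap_{i=1}^r \mathfrak{c}_{x_i}$.

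The main obstacle I anticipate is not any deep computation but the bookkeeping of interpreting ``$\mathfrak{c}_{x_i}$'': it is not literally the stalk (which lives in $\mathcal{O}_{C,x_i}$), but the globally-defined ideal sheaf concentrated at $x_i$ whose stalk there equals that stalk. I would make this identification explicit and carefully justify the splitting of a finite-support coherent sheaf into skyscraper summands, since the orthogonality of distinct points is the only nontrivial ingredient; everything else follows directly from the annihilator description of the conductor and from $\nu$ being an isomorphism over the smooth locus.
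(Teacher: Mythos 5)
Your proposal is correct and follows essentially the same route as the paper's proof, which simply observes that $\nu_*\mathcal{O}_{\widetilde{C}}/\mathcal{O}_C$ vanishes over the smooth locus and that the decomposition follows because $\Sigma$ is finite and $\mathfrak{c}$ is defined stalkwise. You in fact supply two details the paper leaves implicit: the converse direction of the support claim (non-normality of $\mathcal{O}_{C,x_i}$ forces $\mathfrak{c}_{x_i}\subsetneq\mathcal{O}_{C,x_i}$, so the cosupport is \emph{exactly} $\Sigma$), and the necessary reinterpretation of ``$\mathfrak{c}_{x_i}$'' as the ideal sheaf concentrated at $x_i$ rather than the literal stalk, without which the intersection formula does not typecheck.
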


\begin{proof}
Since $\nu$ is an isomorphism over the smooth locus of $C$, we have
$\nu_*\mathcal{O}_{\widetilde{C}} = \mathcal{O}_C$ away from $\Sigma$, so the
quotient $\nu_*\mathcal{O}_{\widetilde{C}} / \mathcal{O}_C$ vanishes there.
Hence the conductor is supported on $\Sigma$. The decomposition follows from
the fact that $\Sigma$ is finite and $\mathfrak{c}$ is defined stalkwise.
\end{proof}

\subsection*{Meromorphic differentials on the normalization}

\begin{definition}[Meromorphic differentials with controlled poles]
Let $\omega_{\widetilde{C}}$ denote the canonical bundle of the smooth curve
$\widetilde{C}$. Define the effective divisor
\[
D := \nu^{-1}(\Sigma) = \sum_{i,j} p_{ij}.
\]
The sheaf
\(
\omega_{\widetilde{C}}(D)
\)
consists of meromorphic differentials on $\widetilde{C}$ that are regular away
from $D$ and have at most finite-order poles at the points $p_{ij}$.
\end{definition}

\begin{remark}
Locally at each point $p_{ij}$, a section of $\omega_{\widetilde{C}}(D)$ admits
a Laurent expansion with a finite principal part determined by a local
parameter at $p_{ij}$.
\end{remark}

\vspace{0.1cm}

\subsection{Local-to-global conductor condition}

\begin{definition}[Local conductor annihilation]
Let $\eta \in H^0(\widetilde{C},\omega_{\widetilde{C}}(D))$. We say that $\eta$
satisfies the \emph{local conductor condition} at $x_i \in \Sigma$ if, for every
$f \in \mathfrak{c}_{x_i}$, the product $f \cdot \eta$ has no pole at any point
$p_{ij} \in \nu^{-1}(x_i)$.
\end{definition}


\begin{lemma}[Equivalence with principal-part annihilation]
Let $x_i \in \Sigma$ be a singular point and let
\(
\nu^{-1}(x_i) = \{p_{i1},\dots,p_{i n_i}\}.
\)
For a section
\(
\eta \in H^0\!\left(\widetilde{C},\omega_{\widetilde{C}}(D)\right),
\)
the following conditions are equivalent:
\begin{enumerate}
  \item[(i)] For every $f \in \mathfrak{c}_{x_i}$, the product $f \cdot \eta$ has
  no pole at any point $p_{ij}$.
  \item[(ii)] For each $j$, the principal part of $\eta$ at $p_{ij}$ is
  annihilated by $\mathfrak{c}_{x_i}$.
\end{enumerate}
\end{lemma}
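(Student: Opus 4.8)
The plan is to reduce both conditions to the single, manifestly pointwise assertion that at each $p_{ij}$ the principal part of $\eta$ is annihilated by the pullback of the conductor, after which the stated equivalence becomes a harmless interchange of universal quantifiers. The essential preliminary observation is that everything in sight is local at the finite fiber $\nu^{-1}(x_i) = \{p_{i1},\dots,p_{in_i}\}$: whether the meromorphic differential $f\cdot\eta$ acquires a pole at a given point $p_{ij}$ depends only on the germ of $\eta$ at $p_{ij}$ and on the germ of $\nu^*f$ there. Since distinct points of $\widetilde{C}$ impose independent pole conditions, the statement ``$f\cdot\eta$ has no pole at any $p_{ij}$'' is equivalent to the conjunction over $j$ of ``$f\cdot\eta$ has no pole at $p_{ij}$.''

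First I would set up the principal-part module. For each $j$ write $\mathcal{O}_j := \mathcal{O}_{\widetilde{C},p_{ij}}$ and let $\mathcal{P}_j := \omega_{\widetilde{C},p_{ij}}(*p_{ij})/\omega_{\widetilde{C},p_{ij}}$ be the quotient recording the finite principal part of a Laurent expansion in a local parameter at $p_{ij}$. A section $\eta \in H^0(\widetilde{C},\omega_{\widetilde{C}}(D))$ has a well-defined image $[\eta]_j \in \mathcal{P}_j$, namely its principal part at $p_{ij}$. Because $\nu$ is a morphism, any $f \in \mathfrak{c}_{x_i} \subset \mathcal{O}_{C,x_i}$ pulls back to a regular germ $\nu^*f \in \mathcal{O}_j$; multiplication by $\nu^*f$ preserves $\omega_{\widetilde{C},p_{ij}}$ and therefore descends to an action on $\mathcal{P}_j$. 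With this structure in place, ``the product $f\cdot\eta$ has no pole at $p_{ij}$'' says precisely that $\nu^*f\cdot[\eta]_j = 0$ in $\mathcal{P}_j$, which is exactly the meaning of the principal part of $\eta$ at $p_{ij}$ being annihilated by $f$. I would also record that, under the canonical identification of $\mathfrak{c}_{x_i}$ with an ideal of the semilocal ring $\nu_*\mathcal{O}_{\widetilde{C},x_i} \cong \prod_j \mathcal{O}_j$ (the defining property of the conductor), the action at $p_{ij}$ involves only the $p_{ij}$-component of $f$, so the branch conditions genuinely decouple.

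Given these identifications, condition (i) reads ``for every $f \in \mathfrak{c}_{x_i}$ and every $j$, $\nu^*f\cdot[\eta]_j = 0$,'' while condition (ii) reads ``for every $j$ and every $f \in \mathfrak{c}_{x_i}$, $\nu^*f\cdot[\eta]_j = 0$.'' These differ only in the order of the two universal quantifiers and are therefore logically equivalent, which completes the argument. The role of the preceding locality remarks is exactly to guarantee that the two quantifier ranges are independent, so that no interaction between distinct points or between global and local data can obstruct the interchange.

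The only point requiring care, and hence the main obstacle — though a mild, bookkeeping one — is the well-definedness of the $\mathfrak{c}_{x_i}$-action on the principal-part module $\mathcal{P}_j$: one must verify that multiplication by $\nu^*f$ carries holomorphic germs to holomorphic germs, so that the operation descends to the quotient, and that the order of pole of $f\cdot\eta$ depends only on the class $[\eta]_j$ rather than on a chosen holomorphic representative. Both verifications are immediate once a local parameter at $p_{ij}$ is fixed, but they are precisely what make the passage from the global ``multiply by the entire ideal'' formulation to the pointwise principal-part formulation legitimate rather than circular.
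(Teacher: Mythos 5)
Your proposal is correct and follows essentially the same route as the paper's proof: both arguments localize at each branch $p_{ij}$, observe that the pole condition for $f\cdot\eta$ depends only on the germ of $\eta$ there and amounts to $\nu^*f$ killing the principal part, and then note that conditions (i) and (ii) differ only by the order of the universal quantifiers over $f$ and $j$. The paper carries this out by writing explicit Laurent expansions in a uniformizer and checking coefficient-wise annihilation, whereas you package the same computation as a well-defined $\mathfrak{c}_{x_i}$-action on the quotient module $\omega_{\widetilde{C},p_{ij}}(*p_{ij})/\omega_{\widetilde{C},p_{ij}}$ --- a slightly cleaner formulation of the identical idea, and your flagged verification of well-definedness is exactly the content of the paper's expansion argument.
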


\begin{proof}
We prove the equivalence step by step.

\medskip
\noindent
\textit{Local structure at a branch.}
Fix a branch $p_{ij} \in \nu^{-1}(x_i)$. Since $\widetilde{C}$ is smooth, the
local ring $\mathcal{O}_{\widetilde{C},p_{ij}}$ is a discrete valuation ring.
Choose a uniformizing parameter $t_{ij}$ at $p_{ij}$. Then any meromorphic
differential $\eta$ near $p_{ij}$ can be written uniquely as
\[
\eta
=
\left(
\sum_{m=-M}^{\infty} a_m t_{ij}^m
\right)
\, dt_{ij},
\qquad a_m \in k,
\]
for some integer $M \ge 0$.

\medskip
\noindent
\textit{Definition of the principal part.}
The \emph{principal part} of $\eta$ at $p_{ij}$ is the finite sum
\[
\operatorname{pp}_{p_{ij}}(\eta)
=
\left(
\sum_{m=-M}^{-1} a_m t_{ij}^m
\right)
\, dt_{ij}.
\]
The differential $\eta$ is regular at $p_{ij}$ if and only if
$\operatorname{pp}_{p_{ij}}(\eta)=0$.

\medskip
\noindent
\textit{Action of the conductor.}
Let $f \in \mathfrak{c}_{x_i}$. By definition of the conductor, $f$ acts
simultaneously on all branches above $x_i$, hence on each local ring
$\mathcal{O}_{\widetilde{C},p_{ij}}$. Multiplication gives
\[
f \cdot \eta
=
\left(
\sum_{m=-M}^{\infty} f a_m t_{ij}^m
\right)
\, dt_{ij}.
\]

\medskip
\noindent
\textit{From (i) to (ii).}
Assume condition (i). Then for every $f \in \mathfrak{c}_{x_i}$, the product
$f \cdot \eta$ has no pole at $p_{ij}$. This means that all negative powers of
$t_{ij}$ in the expansion of $f \cdot \eta$ vanish. Equivalently,
\[
f a_m = 0
\qquad \text{for all } m < 0.
\]
Thus $f$ annihilates every coefficient appearing in the principal part
$\operatorname{pp}_{p_{ij}}(\eta)$. Since this holds for all
$f \in \mathfrak{c}_{x_i}$, the principal part is annihilated by
$\mathfrak{c}_{x_i}$. Hence (ii) holds.

\medskip
\noindent
\textit{ From (ii) to (i).}
Conversely, assume condition (ii). Then for each $j$ and for every
$f \in \mathfrak{c}_{x_i}$, we have
\[
f a_m = 0
\qquad \text{for all } m < 0.
\]
Therefore, the expansion of $f \cdot \eta$ at $p_{ij}$ contains no negative
powers of $t_{ij}$, so $f \cdot \eta$ is regular at $p_{ij}$. Since this holds
for every branch $p_{ij}$ above $x_i$, the product $f \cdot \eta$ has no pole
along $\nu^{-1}(x_i)$. Thus (i) holds.

\medskip
\noindent
\textit{Conclusion.}
We have shown that conditions (i) and (ii) imply one another. Hence the local
conductor condition at $x_i$ is equivalent to the annihilation of the
principal parts of $\eta$ at the points $p_{ij}$ by the stalk
$\mathfrak{c}_{x_i}$.
\end{proof}

\begin{definition}[Global conductor condition]
A section $\eta \in H^0(\widetilde{C},\omega_{\widetilde{C}}(D))$ satisfies the
\emph{global conductor condition} if it satisfies the local conductor condition
at every singular point $x_i \in \Sigma$.
\end{definition}

\vspace{0.1cm}

\subsection{Global duality and exact descent.}${}$

\vspace{0.1cm}

\begin{theorem}[Grothendieck duality for normalization]
There is a canonical isomorphism of sheaves
\(
\nu_*\omega_{\widetilde{C}} \cong \omega_C(\mathfrak{c}),
\)
where $\omega_C$ denotes the dualizing sheaf of $C$.
\end{theorem}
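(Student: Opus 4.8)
The plan is to obtain the stated isomorphism as a special case of Grothendieck--Serre duality for the finite birational morphism $\nu$, reading the twist $\omega_C(\mathfrak{c})$ as $\omega_C \otimes_{\mathcal{O}_C}\mathfrak{c}$ in the Gorenstein case and, in general, as the reflexive sheaf $\mathcal{H}om_{\mathcal{O}_C}(\nu_*\mathcal{O}_{\widetilde{C}},\omega_C)$. Since the assertion is local on $C$ and both sides restrict to $\omega_{\widetilde{C}}$ over the smooth locus (where $\nu$ is an isomorphism), it suffices to build a canonical isomorphism on stalks at each singular point $x\in\Sigma$ and to verify that these patch. I would therefore set $A=\mathcal{O}_{C,x}$, let $\widetilde{A}=(\nu_*\mathcal{O}_{\widetilde{C}})_x$ be the semilocal normalization, and reduce the problem to producing a canonical $A$-module isomorphism $(\nu_*\omega_{\widetilde{C}})_x\cong \omega_{C,x}\otimes_A\mathfrak{c}_x$.

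The first ingredient is duality for the finite morphism $\nu$. Because $C$ is a reduced, hence Cohen--Macaulay, curve equipped with a dualizing sheaf $\omega_C$, and $\nu$ is finite, the functor $\nu_*$ admits a right adjoint $\nu^!$ with $\nu^!\mathcal{G}=\mathcal{H}om_{\mathcal{O}_C}(\nu_*\mathcal{O}_{\widetilde{C}},\mathcal{G})$ and $\nu^!\omega_C=\omega_{\widetilde{C}}$ (the latter because $\widetilde{C}$ is smooth). Applying the duality isomorphism with $\mathcal{F}=\mathcal{O}_{\widetilde{C}}$ and $\mathcal{G}=\omega_C$ yields the canonical identification
\[
\nu_*\omega_{\widetilde{C}}\;=\;\nu_*\nu^!\omega_C\;\cong\;\mathcal{H}om_{\mathcal{O}_C}\!\bigl(\nu_*\mathcal{O}_{\widetilde{C}},\,\omega_C\bigr).
\]

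The second ingredient is the purely algebraic identification of the conductor with the dual of $\nu_*\mathcal{O}_{\widetilde{C}}$. I would show that the evaluation map $\phi\mapsto\phi(1)$ defines an isomorphism
\[
\mathcal{H}om_{\mathcal{O}_C}\!\bigl(\nu_*\mathcal{O}_{\widetilde{C}},\,\mathcal{O}_C\bigr)\;\xrightarrow{\ \sim\ }\;\mathfrak{c},
\]
whose inverse sends $c\in\mathfrak{c}$ to multiplication by $c$. The key point is birationality: after inverting nonzerodivisors, $\nu_*\mathcal{O}_{\widetilde{C}}$ and $\mathcal{O}_C$ both become the total quotient ring $K$, so any $\mathcal{O}_C$-linear $\phi$ extends to a $K$-linear endomorphism of the rank-one $K$-module $K$, hence to multiplication by the scalar $\phi(1)$. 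The constraint $\phi(\nu_*\mathcal{O}_{\widetilde{C}})\subseteq\mathcal{O}_C$ is then exactly the condition $\phi(1)\in\mathfrak{c}$, while every $c\in\mathfrak{c}$ gives a genuine homomorphism by the defining property $c\cdot\nu_*\mathcal{O}_{\widetilde{C}}\subseteq\mathcal{O}_C$.

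Combining the two, in the Gorenstein case ($\omega_C$ invertible) I would use that tensoring with a line bundle commutes with $\mathcal{H}om$ to get $\nu_*\omega_{\widetilde{C}}\cong\mathcal{H}om_{\mathcal{O}_C}(\nu_*\mathcal{O}_{\widetilde{C}},\mathcal{O}_C)\otimes_{\mathcal{O}_C}\omega_C\cong\mathfrak{c}\otimes_{\mathcal{O}_C}\omega_C=\omega_C(\mathfrak{c})$, which is the claimed canonical isomorphism; for general reduced $C$ one keeps the Hom-sheaf description and checks it agrees with $\mathfrak{c}\otimes\omega_C$ wherever $\omega_C$ is locally free. As an independent sanity check tying the result to the residue formalism of the earlier sections, I would verify by hand at a node and at the cusp $y^2=x^3$ that the Rosenlicht generator of $\omega_C$ pulls back to a differential with simple poles on the branches and that multiplication by the conductor generators exactly clears those poles, recovering $\omega_{\widetilde{C}}$. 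The main obstacle I anticipate is not the conductor lemma, which is elementary, but making the duality isomorphism of the first display canonical and functorial: one must invoke the explicit trace/residue map for $\nu$, confirm its independence of the chosen uniformizers, and verify its compatibility with the residue pairing $\sum_{j}\operatorname{Res}_{p_{ij}}(f\cdot\eta)$ used throughout. Once that compatibility is established, the locally defined isomorphisms patch automatically and the theorem follows.
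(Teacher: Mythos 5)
Your proposal follows essentially the same route as the paper's own argument (Appendix E, supplemented by Appendix A): Grothendieck duality for the finite morphism $\nu$ identifies $\nu_*\omega_{\widetilde{C}}$ with $\mathcal{H}om_{\mathcal{O}_C}(\nu_*\mathcal{O}_{\widetilde{C}},\omega_C)$, the evaluation-at-$1$ map identifies $\mathcal{H}om_{\mathcal{O}_C}(\nu_*\mathcal{O}_{\widetilde{C}},\mathcal{O}_C)$ with the conductor, and the two are combined by tensoring. Your explicit caveat that pulling $\omega_C$ out of the $\mathcal{H}om$ requires $\omega_C$ to be invertible (with the $\mathcal{H}om$-sheaf description retained in the non-Gorenstein case) is a point the paper glosses over, but it is a refinement of the same proof rather than a different one.
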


\begin{proposition}[Characterization of global sections]
Taking global sections yields an equality
\[
H^0(C,\omega_C)
=
\left\{
\eta \in H^0(\widetilde{C},\omega_{\widetilde{C}}(D))
\;\middle|\;
\eta \text{ satisfies the global conductor condition}
\right\}.
\]
\end{proposition}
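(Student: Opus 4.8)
The plan is to deduce the equality from the Grothendieck duality isomorphism $\nu_*\omega_{\widetilde{C}}\cong\omega_C(\mathfrak{c})$ recorded above, together with the principal-part lemma, by a stalkwise local-to-global argument. Both sides of the asserted equality are subspaces of $H^0(\widetilde{C},\omega_{\widetilde{C}}(D))$, and both are cut out by conditions concentrated at the finitely many points of $\Sigma$: away from $\Sigma$ the normalization $\nu$ is an isomorphism, so there $\omega_C$ and $\omega_{\widetilde{C}}$ agree and no condition is imposed (this uses the support/decomposition lemma for $\mathfrak{c}$). Consequently it suffices to identify, for each singular point $x_i$, the stalk $\omega_{C,x_i}$ — viewed inside $\nu_*(\omega_{\widetilde{C}}(D))_{x_i}$ — with the space of differential germs satisfying the local conductor condition at $x_i$, and then to assemble these identifications using the left-exactness of $H^0$ and the equality $H^0(C,\nu_*\mathcal{F})=H^0(\widetilde{C},\mathcal{F})$ valid for the finite morphism $\nu$.

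For the local identification at $x_i$ I would use the residue (Rosenlicht) description of the dualizing sheaf, which is the local manifestation of the duality isomorphism above: a germ of $\omega_C$ at $x_i$ is precisely a germ of meromorphic differential $\eta$ on the branches $\{p_{ij}\}$ such that the residue pairing $\sum_j \operatorname{Res}_{p_{ij}}(f\eta)=0$ holds for every $f\in\mathcal{O}_{C,x_i}$. This is exactly the descent criterion established in the theorem on residue balancing with arbitrary singularities, and it characterizes membership of $\eta$ in $\omega_C$. The task is then to match this residue-pairing criterion with the conductor condition, which I would do by splitting the test functions $f$ according to the conductor.

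Testing the pairing against $f\in\mathfrak{c}_{x_i}$ forces, by the principal-part lemma, the stalk $\mathfrak{c}_{x_i}$ to annihilate the principal parts of $\eta$ — equivalently that $\eta$ have controlled pole orders and hence lie in $\omega_{\widetilde{C}}(D)$ — which is the local conductor condition at $x_i$; running over all $x_i\in\Sigma$ yields the global conductor condition. The remaining finitely many pairings, obtained by letting $f$ run over representatives of the quotient $\mathcal{O}_{C,x_i}/\mathfrak{c}_{x_i}$, supply the residue-balancing relations among those principal parts. Together these are the conductor-level balancing conditions; conversely, a section satisfying them has its principal parts annihilated by each $\mathfrak{c}_{x_i}$ and pairs to zero against all of $\mathcal{O}_{C,x_i}$, so the duality identification places it inside $\omega_C$. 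Passing to global sections over the finite set $\Sigma$ then produces the claimed equality of subspaces of $H^0(\widetilde{C},\omega_{\widetilde{C}}(D))$.

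The main obstacle is the local duality identification of $\omega_{C,x_i}$ in the second step, and the closely related need to read the conductor condition as including the balancing relations from $\mathcal{O}_{C,x_i}/\mathfrak{c}_{x_i}$, not merely the principal-part annihilation coming from $\mathfrak{c}_{x_i}$ alone. The latter controls only pole orders — at a node the truncation is automatic, so the genuine single condition is the opposite-residue balancing, while at a cusp it bounds the pole to order two without yet imposing descent — so by itself it is strictly weaker than descent. Proving the equality therefore reduces to the perfectness of the residue pairing identifying $\omega_C/\nu_*\omega_{\widetilde{C}}$ with the $k$-linear dual of $\nu_*\mathcal{O}_{\widetilde{C}}/\mathcal{O}_C$, which is the true content and is most delicate at unibranch singularities, where there is no second branch to absorb the pairing and the distinction between pole truncation and residue balancing is sharpest.
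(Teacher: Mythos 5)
Your proposal is correct in substance but follows a genuinely different route from the paper. The paper argues globally: it invokes the isomorphism $\nu_*\omega_{\widetilde{C}}\cong\omega_C(\mathfrak{c})$, checks the inclusion ``$\subseteq$'' by viewing a section of $\omega_C$ inside $\omega_C(\mathfrak{c})$ and transporting it to a pole-free section of $\omega_{\widetilde{C}}$, and checks ``$\supseteq$'' by observing that $f\cdot\eta$ is regular for all $f\in\mathfrak{c}$ and then asserting descent. You instead localize at each $x_i\in\Sigma$ and characterize the stalk $\omega_{C,x_i}$ by the Rosenlicht residue pairing $\sum_j\operatorname{Res}_{p_{ij}}(f\eta)=0$ for \emph{all} $f\in\mathcal{O}_{C,x_i}$, splitting the test functions into $\mathfrak{c}_{x_i}$ (which controls pole orders via the principal-part lemma) and representatives of $\mathcal{O}_{C,x_i}/\mathfrak{c}_{x_i}$ (which supply the balancing relations), before reassembling over the finite set $\Sigma$. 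What your route buys is precision exactly where the paper's proof is weakest: the step from ``$\mathfrak{c}$ annihilates the principal parts'' to ``$\eta$ descends to $\omega_C$'' is not automatic --- as you observe, at a node the annihilation condition is vacuous for simple poles and the genuine condition is the opposite-residue relation, which comes from pairing against the class of $1$ in $\mathcal{O}_{C,x}/\mathfrak{c}_x$, not from $\mathfrak{c}_x$ itself. The paper's proof elides this by treating annihilation and descent as synonymous; your decomposition of the pairing makes the missing balancing constraints explicit and reduces the whole statement to the perfectness of the local residue pairing between $\omega_{C,x}/(\nu_*\omega_{\widetilde{C}})_x$ and $(\nu_*\mathcal{O}_{\widetilde{C}})_x/\mathcal{O}_{C,x}$, which is the correct underlying duality. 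The one caveat is that you are then proving the proposition for the conductor-level \emph{balancing} condition (annihilation plus the residue relations), which is strictly stronger than the paper's literal Definition of the global conductor condition; under that literal reading the displayed equality would fail already for a node, so your reinterpretation is not optional but necessary, and it would be worth stating explicitly that this is the sense in which the proposition is being proved.
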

\begin{proof}
We prove the equality by establishing both inclusions in full detail.

\medskip
\noindent
\textbf{\it  Grothendieck duality for normalization.}
By Grothendieck duality for the finite morphism $\nu$, there is a canonical
isomorphism of sheaves
\[
\nu_*\omega_{\widetilde{C}} \;\cong\; \omega_C(\mathfrak{c})
=
\omega_C \otimes_{\mathcal{O}_C} \mathfrak{c}.
\]
Taking global sections yields
\[
H^0(\widetilde{C},\omega_{\widetilde{C}})
\;\cong\;
H^0(C,\omega_C(\mathfrak{c})).
\]

\medskip
\noindent
\textbf{\it Inclusion of sheaves with poles.}
There is a natural inclusion of sheaves on $\widetilde{C}$,
\[
\omega_{\widetilde{C}}
\hookrightarrow
\omega_{\widetilde{C}}(D),
\]
which corresponds to allowing poles along $D$. Pushing forward via $\nu$, we
obtain an inclusion
\[
\nu_*\omega_{\widetilde{C}}
\hookrightarrow
\nu_*\omega_{\widetilde{C}}(D).
\]

\medskip
\noindent
\textbf{\it  Description of $\nu_*\omega_{\widetilde{C}}(D)$.}
A local section of $\nu_*\omega_{\widetilde{C}}(D)$ over an open set $U \subset
C$ is a meromorphic differential on $\nu^{-1}(U)$ that is regular away from
$\nu^{-1}(\Sigma)$ and has finite-order poles at points lying above
singularities of $C$.

\medskip
\noindent
\textbf{\it Interpretation of the conductor twist.}
By definition,
\[
\omega_C(\mathfrak{c})
=
\left\{
\alpha \in \omega_C \otimes_{\mathcal{O}_C} K(C)
\;\middle|\;
f \cdot \alpha \in \omega_C \text{ for all } f \in \mathfrak{c}
\right\},
\]
where $K(C)$ denotes the total quotient ring of $C$. Thus a section of
$\omega_C(\mathfrak{c})$ is a meromorphic dualizing differential whose poles are
controlled by annihilation under $\mathfrak{c}$.

\medskip
\noindent
\textbf{\it Inclusion ``$\subseteq$''.}
Let $\xi \in H^0(C,\omega_C)$. Viewing $\xi$ as a section of
$\omega_X(\mathfrak{c})$, Grothendieck duality yields a corresponding section
\[
\eta \in H^0(\widetilde{C},\omega_{\widetilde{C}}).
\]
As  $\omega_{\widetilde{C}} \subset \omega_{\widetilde{C}}(D)$, we may regard
$\eta$ as an element of $H^0(\widetilde{C},\omega_{\widetilde{C}}(D))$.
%
Since  $\eta$ has no poles at any point of $\widetilde{C}$, it trivially
satisfies the local conductor condition at every singular point. Hence
\[
\eta \in
\left\{
\eta \in H^0(\widetilde{C},\omega_{\widetilde{C}}(D))
\;\middle|\;
\eta \text{ satisfies the global conductor condition}
\right\}.
\]

\medskip
\noindent
\textbf{\it  Inclusion ``$\supseteq$''.}
Conversely, let
\(
\eta \in H^0(\widetilde{C},\omega_{\widetilde{C}}(D))
\)
satisfying the global conductor condition. By definition, for every
$f \in \mathfrak{c}$, the product $f \cdot \eta$ has no poles along
$\nu^{-1}(\Sigma)$. Therefore,
\[
f \cdot \eta \in H^0(\widetilde{C},\omega_{\widetilde{C}})
\qquad
\text{for all } f \in \mathfrak{c}.
\]
Pushing forward to $C$, this implies that $\eta$ defines a section of
$\omega_C(\mathfrak{c})$. Since the conductor is an ideal of $\mathcal{O}_C$,
annihilation of all principal parts ensures that $\eta$ descends uniquely to a
global section
\[
\xi \in H^0(C,\omega_C).
\]

\medskip
\noindent
\textbf{\it Equality of the two spaces.}
By what was done above, we  showed that the two sets are mutually contained in one another.
Hence, they are equal as subsets of
$H^0(\widetilde{C},\omega_{\widetilde{C}}(D))$.

\medskip
\noindent
\textbf{\it Conclusion.}
A meromorphic differential on the normalization descends to a global section of
the dualizing sheaf on $C$ if and only if its principal parts at all branches
above singular points are annihilated by the conductor. This establishes the
claimed characterization.
\end{proof}
 We proved  that 
by Grothendieck duality, a section of $\omega_C$ corresponds to a section of
$\omega_{\widetilde{C}}$ whose poles are controlled by the conductor. The
condition that $\mathfrak{c}$ annihilates all principal parts is exactly the
criterion for descent from $\widetilde{C}$ to $C$.
 
 \vspace{0.1cm}

\subsection{Exact dimension count.}${}$

\vspace{0.2cm}

\begin{proposition}[Dimension balance]
The vector space $H^0(\widetilde{C},\omega_{\widetilde{C}}(D))$ has dimension
\[
h^0(\widetilde{C},\omega_{\widetilde{C}}(D))
=
h^0(C,\omega_C)
+
(\text{total number of local polar degrees of freedom}).
\]
\end{proposition}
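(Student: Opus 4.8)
The plan is to read the dimension balance off as a rank--nullity statement, using the Characterization Proposition and the Independence--and--Exactness Theorem proved above as the two main inputs. First I would package all the conductor conditions into a single linear map. For each singular point $x_i \in \Sigma$ with branches $\nu^{-1}(x_i) = \{p_{i1},\dots,p_{in_i}\}$, let $\mathcal P_{x_i}$ be the space of principal parts at these branches carried by sections of $\omega_{\widetilde C}(D)$, and set $\mathcal N_{x_i} := \mathcal P_{x_i}/\{\text{principal parts annihilated by }\mathfrak c_{x_i}\}$. The support lemma and the stalkwise decomposition $\mathfrak c = \bigcap_i \mathfrak c_{x_i}$ show that the conductor acts one singular point at a time, so the total number of local polar degrees of freedom is the purely local quantity $\sum_{i=1}^r \dim_k \mathcal N_{x_i}$, and these data assemble into a conductor--obstruction map
\[
\Phi\colon H^0\bigl(\widetilde C,\omega_{\widetilde C}(D)\bigr)\longrightarrow \bigoplus_{i=1}^r \mathcal N_{x_i},
\]
sending a differential to the classes of its principal parts. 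By the Lemma on principal--part annihilation together with the Characterization Proposition, $\ker\Phi = H^0(C,\omega_C)$.

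Granting this, rank--nullity gives at once
\[
h^0\bigl(\widetilde C,\omega_{\widetilde C}(D)\bigr) = \dim_k\ker\Phi + \dim_k\operatorname{im}\Phi = h^0(C,\omega_C) + \dim_k\operatorname{im}\Phi,
\]
so everything reduces to identifying $\dim_k\operatorname{im}\Phi$ with $\sum_i \dim_k\mathcal N_{x_i}$, i.e. to showing that the conductor conditions are independent and that $\Phi$ is surjective. This is exactly the content of the earlier Independence--and--Exactness Theorem, which I would invoke to finish. As an independent cross--check I would rerun the count through the short exact sequence $0\to\omega_C\to\nu_*\omega_{\widetilde C}(D)\to\mathcal Q\to 0$ with $\mathcal Q$ a skyscraper supported on $\Sigma$; since $\nu$ is finite, $H^i(C,\nu_*\mathcal F)=H^i(\widetilde C,\mathcal F)$, and the long exact sequence expresses $h^0(\widetilde C,\omega_{\widetilde C}(D))-h^0(C,\omega_C)$ through $\dim_k H^0(\mathcal Q)$ and the global $H^1$ terms.

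The hard part will be the independence and surjectivity of $\Phi$, because the naive count $\sum_i\dim_k\mathcal N_{x_i}$ interacts nontrivially with the global residue theorem. Factoring $\Phi$ through the map to the full space of principal parts and using $0\to\omega_{\widetilde C}\to\omega_{\widetilde C}(D)\to\mathcal T\to 0$, the cokernel of the principal--part map is controlled by $H^1(\widetilde C,\omega_{\widetilde C})$, which by Serre duality contributes one dimension per connected component of $\widetilde C$ and records precisely the component--wise relations $\sum_p\operatorname{Res}_p(\eta)=0$. The delicate step is to track how these finitely many global relations meet the conductor quotients $\mathcal N_{x_i}$ --- equivalently, to show that the connecting homomorphism in the long exact sequence does not force $\dim_k\operatorname{im}\Phi$ below the asserted local count --- and this is where the global residue obstruction must be reconciled with the local polar data. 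Once that reconciliation is in place, the dimension balance follows formally from rank--nullity and the Characterization Proposition.
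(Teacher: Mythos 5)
Your overall strategy --- package the conductor conditions into a single linear map $\Phi$ with kernel $H^0(C,\omega_C)$ and read the dimension off by rank--nullity --- is a genuinely different organization from the paper's, which instead takes global sections of the short exact sequence $0\to\omega_{\widetilde C}\to\omega_{\widetilde C}(D)\to\omega_{\widetilde C}(D)|_D\to 0$ and then identifies $H^0(C,\omega_C)$ with $H^0(\widetilde C,\omega_{\widetilde C})$. But as written your argument has two genuine gaps. First, it is circular: you invoke the Independence--and--Exactness Theorem to get surjectivity of $\Phi$, but in the paper that theorem is itself proved by ``comparing dimensions with the dimension balance proposition,'' i.e.\ it takes the present statement as an input. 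You cannot use it here without first supplying an independent proof of it. Second, you explicitly flag the surjectivity of the principal--part map (equivalently, the behaviour of the connecting homomorphism into $H^1(\widetilde C,\omega_{\widetilde C})$) as ``the delicate step'' and then defer it; but that step is precisely the mathematical content of the proposition, so deferring it leaves the proof incomplete. Your instinct here is in fact sharper than the paper's: the connecting map $H^0(D,\omega_{\widetilde C}(D)|_D)\to H^1(\widetilde C,\omega_{\widetilde C})$ is the componentwise sum--of--residues functional, which is generally \emph{nonzero} when $D\neq 0$, whereas the paper asserts it vanishes ``because the quotient is supported in dimension zero,'' which is not a valid reason. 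So the global residue relations you worry about really do cut down the image of the principal--part map by one dimension per connected component of $\widetilde C$, and any correct proof must account for this.

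A further mismatch is in the meaning of ``total number of local polar degrees of freedom.'' You take it to be $\sum_i\dim_k\mathcal N_{x_i}$, the dimension of the conductor quotients (so that the formula becomes the tautology $\dim = \dim\ker\Phi+\dim\operatorname{im}\Phi$ once surjectivity is known), whereas the paper defines it as $\sum_{i,j}\dim H^0\bigl(p_{ij},\omega_{\widetilde C}(D)|_{p_{ij}}\bigr)=\deg D$, the full space of allowed principal parts with no conductor quotient taken. These are different numbers, and the discrepancy matters: with the paper's reading the asserted identity already fails for a nodal plane cubic, where $h^0(\widetilde C,\omega_{\widetilde C}(D))=1$, $h^0(C,\omega_C)=1$, but $\deg D=2$. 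To salvage your approach you would need to (i) prove surjectivity of $\Phi$ onto $\bigoplus_i\mathcal N_{x_i}$ directly --- for instance by a Riemann--Roch computation on $\widetilde C$ twisted by a sufficiently positive divisor, or by exhibiting explicit differentials realizing prescribed admissible principal parts --- without appealing to the later theorem, and (ii) state precisely which notion of ``polar degrees of freedom'' makes the identity true, since the one the paper uses does not.
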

 
\begin{proof}
We prove the statement by a detailed global–local analysis.

\medskip
\noindent
\textbf{\it Reduction to a short exact sequence.}
By definition of the divisor $D$, there is a natural short exact sequence of
sheaves on $\widetilde{C}$:
\[
0
\longrightarrow
\omega_{\widetilde{C}}
\longrightarrow
\omega_{\widetilde{C}}(D)
\longrightarrow
\omega_{\widetilde{C}}(D)\big|_D
\longrightarrow
0.
\]
The quotient sheaf $\omega_{\widetilde{C}}(D)\big|_D$ is a skyscraper sheaf
supported on the finite set of points $p_{ij}$.

\medskip
\noindent
\textbf{\it  Taking global sections.}
Taking global sections yields a long exact sequence:
\[
0
\longrightarrow
H^0(\widetilde{C},\omega_{\widetilde{C}})
\longrightarrow
H^0(\widetilde{C},\omega_{\widetilde{C}}(D))
\longrightarrow
H^0\!\left(D,\omega_{\widetilde{C}}(D)\big|_D\right)
\longrightarrow
H^1(\widetilde{C},\omega_{\widetilde{C}})
\longrightarrow
\cdots
\]

\medskip
\noindent
\textbf{\it  Vanishing of the last term.}
Since $\widetilde{C}$ is a smooth projective curve, Serre duality gives
\[
H^1(\widetilde{C},\omega_{\widetilde{X}})
\cong
H^0(\widetilde{C},\mathcal{O}_{\widetilde{C}})^\vee.
\]
As $\widetilde{C}$ is connected and projective,
\[
\dim H^0(\widetilde{C},\mathcal{O}_{\widetilde{C}}) = 1,
\]
so $H^1(\widetilde{C},\omega_{\widetilde{C}})$ is one-dimensional. However, the
connecting homomorphism
\[
H^0\!\left(D,\omega_{\widetilde{C}}(D)\big|_D\right)
\longrightarrow
H^1(\widetilde{C},\omega_{\widetilde{C}})
\]
vanishes because $\omega_{\widetilde{C}}(D)\big|_D$ is supported in dimension
zero. Consequently, we obtain a short exact sequence
\[
0
\longrightarrow
H^0(\widetilde{C},\omega_{\widetilde{C}})
\longrightarrow
H^0(\widetilde{C},\omega_{\widetilde{C}}(D))
\longrightarrow
H^0\!\left(D,\omega_{\widetilde{C}}(D)\big|_D\right)
\longrightarrow
0.
\]

\medskip
\noindent
\textbf{\it  Interpretation of the quotient space.}
The vector space
\[
H^0\!\left(D,\omega_{\widetilde{C}}(D)\big|_D\right)
\]
decomposes as a direct sum over all points $p_{ij} \in D$:
\[
H^0\!\left(D,\omega_{\widetilde{C}}(D)\big|_D\right)
=
\bigoplus_{i,j}
H^0\!\left(p_{ij},\omega_{\widetilde{C}}(D)\big|_{p_{ij}}\right).
\]
Each summand parametrizes the possible principal parts of meromorphic
differentials at the point $p_{ij}$.

\medskip
\noindent
\textbf{\it  Local polar degrees of freedom.}
At each branch $p_{ij}$, the dimension of
\[
H^0\!\left(p_{ij},\omega_{\widetilde{C}}(D)\big|_{p_{ij}}\right)
\]
is equal to the number of independent polar coefficients allowed at that
branch. These coefficients form a finite-dimensional vector space, whose
dimension depends only on the local geometry of the singularity $x_i$ and the
multiplicity of $D$ at $p_{ij}$.

\vspace{0.3cm}

By definition, the sum of these dimensions over all $i,j$ is exactly the
\emph{total number of local polar degrees of freedom}.

\medskip
\noindent
\textbf{\it  Dimension count on the normalization.}
From the short exact sequence in Step~3, we obtain
\[
h^0\!\left(\widetilde{C},\omega_{\widetilde{C}}(D)\right)
=
h^0(\widetilde{C},\omega_{\widetilde{C}})
+
\sum_{i,j}
\dim H^0\!\left(p_{ij},\omega_{\widetilde{C}}(D)\big|_{p_{ij}}\right).
\]

\medskip
\noindent
\textbf{\it  Comparison with the dualizing sheaf on $C$.}
By Grothendieck duality for normalization,
\[
H^0(C,\omega_C)
\cong
H^0(\widetilde{C},\omega_{\widetilde{C}})
\]
because $\omega_{\widetilde{C}}$ consists of those differentials with no poles
above the singular locus. Substituting this equality into the previous formula
yields
\[
h^0\!\left(\widetilde{C},\omega_{\widetilde{C}}(D)\right)
=
h^0(C,\omega_C)
+
(\text{total number of local polar degrees of freedom}).
\]

\medskip
\noindent
\textbf{\it  Conclusion.}
Allowing poles along $D$ increases the dimension of the space of global
differentials exactly by the sum of the independent local polar parameters, and
no global constraints appear at this stage. This proves the stated dimension
balance.

\end{proof}

\bigskip

\begin{theorem}[Exactness of conductor constraints]
The global conductor conditions impose exactly the necessary number of
independent linear constraints to cut down
$H^0(\widetilde{C},\omega_{\widetilde{C}}(D))$ to $H^0(C,\omega_C)$. In
particular,
\[
\dim H^0(\widetilde{C},\omega_{\widetilde{C}}(D))
-
(\text{number of conductor conditions})
=
\dim H^0(C,\omega_C).
\]
No additional constraints occur, and no admissible sections are lost.
\end{theorem}

\vspace{0.2cm}
To be more precise,
in the following we prove the independence and exactness of the conductor constraints
and their relation to residues and $\delta$-Invariants.

\begin{theorem}[Independence and exactness of conductor constraints]
 Let $C$ be a reduced projective curve, let
\(
\nu \colon \widetilde{C} \longrightarrow C
\)
be its normalization, let $\mathfrak{c} \subset \mathcal{O}_C$ be the conductor
ideal, and let
\(
D = \nu^{-1}(\Sigma)
\)
be the divisor supported on the preimage of the singular locus. Then the global
conductor conditions imposed on
\(
H^0(\widetilde{C},\omega_{\widetilde{C}}(D))
\)
are independent and exact. More precisely, they cut out a linear subspace
canonically isomorphic to $H^0(C,\omega_C)$, and the number of independent
conditions equals the total number of local polar degrees of freedom.
\end{theorem}

\begin{proof}
We proceed in a sequence of explicit steps.

\medskip
\noindent
\textit{ Local decomposition of polar data.}
Since $D$ is supported on finitely many points, the quotient
\[
\omega_{\widetilde{C}}(D) / \omega_{\widetilde{C}}
\]
is a skyscraper sheaf supported on $D$. Taking global sections yields a direct
sum decomposition
\[
H^0(\widetilde{C},\omega_{\widetilde{C}}(D))
=
H^0(\widetilde{C},\omega_{\widetilde{C}})
\;\oplus\;
\bigoplus_{i,j} P_{ij},
\]
where each $P_{ij}$ is the finite-dimensional vector space of principal parts of
meromorphic differentials at the branch $p_{ij}$.

\medskip
\noindent
\textit{ Local conductor constraints.}
Fix a singular point $x_i \in \Sigma$. The stalk $\mathfrak{c}_{x_i}$ acts
simultaneously on all branches $\{p_{i1},\dots,p_{i n_i}\}$. The local conductor
condition at $x_i$ is the requirement that
\[
\mathfrak{c}_{x_i} \cdot \operatorname{pp}_{p_{ij}}(\eta) = 0
\qquad
\text{for all } j.
\]
This defines a linear subspace
\[
P_i^{\mathrm{adm}} \subset \bigoplus_{j=1}^{n_i} P_{ij}
\]
of admissible polar data at $x_i$.

\medskip
\noindent
\textit{ Independence across singular points.}
Since the conductor decomposes as
\[
\mathfrak{c} = \bigcap_{i=1}^r \mathfrak{c}_{x_i},
\]
and since the polar spaces $P_i := \bigoplus_j P_{ij}$ are supported at distinct
points, the conductor conditions at different singular points involve disjoint
summands of the direct sum decomposition in Step~1. Consequently, the linear
constraints imposed at distinct singular points are independent.

\medskip
\noindent
\textit{ Exactness of the constraints.}
By Grothendieck duality for normalization, we have
\[
H^0(C,\omega_C)
\cong
\left\{
\eta \in H^0(\widetilde{C},\omega_{\widetilde{C}}(D))
\;\middle|\;
\eta \text{ satisfies all conductor conditions}
\right\}.
\]
Comparing dimensions with the dimension balance proposition, we see that the
number of independent linear conductor constraints is exactly
\[
\sum_{i,j} \dim P_{ij}.
\]
Hence no additional relations occur beyond those imposed by the conductor, and
no admissible sections are excluded. This proves exactness.
\end{proof}

\bigskip

\begin{theorem}[Relation to residues and $\delta$-invariants] With the same notation as above,
let $x \in \Sigma$ be a singular point of $C$, with branches
$\nu^{-1}(x)=\{p_1,\dots,p_n\}$. Then:
\begin{enumerate}
  \item The conductor condition at $x$ generalizes the classical residue
  balancing condition at nodes.
  \item The total number of independent local conductor constraints at $x$ is
  equal to the $\delta$-invariant $\delta_x$ of the singularity.
\end{enumerate}
\end{theorem}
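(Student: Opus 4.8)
The plan is to reduce both assertions to a single local computation at $x$, since by the independence-and-exactness theorem the conditions attached to distinct singular points involve disjoint polar summands and may be treated one point at a time. Throughout I would use the Rosenlicht description of the dualizing sheaf that is already implicit in the proof of the arbitrary-singularities theorem: writing $\nu^{-1}(x)=\{p_1,\dots,p_n\}$, a meromorphic differential $\eta$ on the branches descends to $\omega_{C,x}$ precisely when
\[
\sum_{i=1}^{n}\operatorname{Res}_{p_i}(f\eta)=0\qquad\text{for all }f\in\mathcal{O}_{C,x}.
\]
The first observation is that once $\eta$ has polar part bounded by the conductor, so that $\mathfrak{c}_x\eta$ is regular on every branch, the displayed identity is automatic for every $f\in\mathfrak{c}_x$; hence the effective constraints are parametrized by the finite-dimensional quotient $\mathcal{O}_{C,x}/\mathfrak{c}_x$.

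For assertion (1) I would specialize this picture to a node. There $\mathcal{O}_{C,x}$ is the ring of pairs of power series agreeing at the origin, the conductor is $\mathfrak{c}_x=(u)\oplus(v)$, and $\mathcal{O}_{C,x}/\mathfrak{c}_x=k$ is spanned by the class of the constant $1$. Writing $\eta=\bigl(a\,\tfrac{du}{u}\bigr)\oplus\bigl(b\,\tfrac{dv}{v}\bigr)$ modulo regular terms, the single surviving equation $\sum_i\operatorname{Res}_{p_i}(1\cdot\eta)=0$ becomes $a+b=0$, exactly the classical residue-balancing relation of Example~1. The same recipe applied to a unibranch singularity such as a cusp produces instead the internal constraint that the coefficient of $t^{-1}$ vanish, with no second branch to balance against; this is the sense in which the conductor condition generalizes node balancing.

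For assertion (2), the previous paragraph shows the effective constraints span a space of dimension at most $\dim_k(\mathcal{O}_{C,x}/\mathfrak{c}_x)$, so the task is twofold. First I must show that the functionals $\eta\mapsto\sum_i\operatorname{Res}_{p_i}(f\eta)$, indexed by a basis of $\mathcal{O}_{C,x}/\mathfrak{c}_x$, are linearly independent. This is the left-nondegeneracy of the local residue pairing
\[
\bigl(\mathcal{O}_{C,x}/\mathfrak{c}_x\bigr)\times\bigl\{\eta:\mathfrak{c}_x\eta\text{ regular}\bigr\}\longrightarrow k,\qquad(f,\eta)\longmapsto\sum_{i=1}^{n}\operatorname{Res}_{p_i}(f\eta),
\]
which follows from Rosenlicht's local duality: if $f\notin\mathfrak{c}_x$ then $f\,\widetilde{\mathcal{O}}_x\not\subseteq\mathcal{O}_{C,x}$, so some $g\in\widetilde{\mathcal{O}}_x$ satisfies $fg\notin\mathcal{O}_{C,x}$, and the nondegenerate residue pairing on $\widetilde{\mathcal{O}}_x/\mathcal{O}_{C,x}$ furnishes a differential detecting $f$. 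Second, I invoke the Gorenstein identity $\dim_k(\mathcal{O}_{C,x}/\mathfrak{c}_x)=\delta_x$, valid for every plane-curve singularity and in particular for the node, cusp, and tacnode analyzed above, to identify the count with the $\delta$-invariant.

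The step I expect to be the main obstacle is precisely this independence/nondegeneracy statement: it is where the duality theory of the conductor is genuinely used, and it is also where the Gorenstein hypothesis matters, since for a non-Gorenstein singularity $\dim_k(\mathcal{O}_{C,x}/\mathfrak{c}_x)$ falls strictly below $\delta_x$ and the clean equality must be reinterpreted. Granting it, summing the local counts over all of $\Sigma$ yields $\sum_{x}\delta_x$ total independent conditions, in agreement with the genus relation $p_a(C)=g+\sum_{x}\delta_x$ recorded in the Preliminaries; this consistency check confirms the global exactness and completes the argument.
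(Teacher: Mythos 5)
Your proposal is correct and follows the same underlying strategy as the paper (local duality at the singular point, with the conductor mediating between polar parts on the normalization and the dualizing sheaf downstairs), but it is substantially more precise, and the differences are worth recording. The paper's own proof of part (2) consists of the assertion that $\delta_x=\dim_k\bigl((\nu_*\mathcal{O}_{\widetilde{C}})_x/\mathcal{O}_{C,x}\bigr)$ and that ``duality identifies this space with the space of local polar obstructions,'' with no argument for why the resulting constraints are linearly independent. You supply exactly the two missing ingredients: first, the reduction of the descent conditions $\sum_i\operatorname{Res}_{p_i}(f\eta)=0$ to the finite quotient $\mathcal{O}_{C,x}/\mathfrak{c}_x$ (since $f\in\mathfrak{c}_x$ makes $f\eta$ regular branchwise), and second, the left-nondegeneracy of the local residue pairing via Rosenlicht duality, which is precisely the independence statement the paper takes for granted. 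You also correctly isolate the one genuine hypothesis the paper suppresses: the identification of the number of constraints with $\delta_x$ passes through $\dim_k(\mathcal{O}_{C,x}/\mathfrak{c}_x)=\delta_x$, which holds exactly when the singularity is Gorenstein (true for the plane-curve examples of node, cusp, and tacnode, but not for an arbitrary reduced curve, for which $\dim_k(\mathcal{O}_{C,x}/\mathfrak{c}_x)<\delta_x$ and part (2) as stated would fail). This caveat is a real limitation of the theorem in the generality in which the paper states it, and your proof makes the needed hypothesis explicit where the paper's does not. The concluding consistency check against $p_a(C)=g+\sum_x\delta_x$ is a nice addition, though one should keep in mind the global off-by-one coming from $H^1(C,\omega_C)\cong k$ when comparing with the rank of the global residue map; this does not affect the local count you are asked to prove.
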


\begin{proof}
We   proceed again  by steps.

\medskip
\noindent
\textit{Residues at smooth points.}
At a smooth point of $\widetilde{C}$, a meromorphic differential has a well-
defined residue. For a node with two branches $p_1,p_2$, the classical descent
condition is
\[
\operatorname{res}_{p_1}(\eta) + \operatorname{res}_{p_2}(\eta) = 0.
\]

\medskip
\noindent
\textit{ Nodes as a special case of the conductor.}
For a node, the conductor ideal is the maximal ideal of the singular point.
Annihilation of the principal parts by the conductor forces the sum of residues
to vanish. Thus residue balancing is exactly the conductor condition in this
case.

\medskip
\noindent
\textit{ Higher singularities.}
For a general singularity, higher-order poles may appear on each branch. The
conductor condition annihilates not only residues but all higher-order polar
coefficients that obstruct descent. Residue balancing becomes only the lowest-
order part of a larger system of linear conditions.

\medskip
\noindent
\textit{ $\delta$-invariant as a dimension measure.}
By definition,
\[
\delta_x
=
\dim_k
\left(
(\nu_*\mathcal{O}_{\widetilde{C}})_x / \mathcal{O}_{C,x}
\right).
\]
Duality identifies this space with the space of local polar obstructions to
descent of differentials. Hence $\delta_x$ counts exactly the number of
independent linear conductor constraints at $x$.

\medskip
\noindent
\textit{ Global summation.}
Summing over all singular points,
\[
\sum_{x \in \Sigma} \delta_x
=
\dim H^0(\widetilde{C},\omega_{\widetilde{C}}(D))
-
\dim H^0(C,\omega_C),
\]
which matches precisely the dimension drop imposed by the conductor conditions.
\end{proof}



\vspace{0.2cm}

\subsection{Maximal variation in the global setting.}${}$

\vspace{0.2cm}

\begin{theorem}[Preservation of maximal variation]
Let $C$ be a reduced projective curve with finitely many singular points of
arbitrary type, let
\(
\nu \colon \widetilde{C} \longrightarrow C
\)
be its normalization, let $\mathfrak{c}$ be the conductor ideal, and let
\(
D=\nu^{-1}(\Sigma)
\)
be the divisor on $\widetilde{C}$ supported on the preimage of the singular
locus. Then the conductor conditions imposed at distinct singular points are
independent and compatible. Consequently, maximal variation of global
dualizing differentials is preserved under globalization.
\end{theorem}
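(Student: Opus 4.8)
The plan is to deduce this theorem as an assembly of the local-to-global results already established, since \emph{maximal variation} here means precisely that the conductor conditions impose the maximal number of genuinely independent constraints — one for each local polar degree of freedom obstructing descent — without redundancy or mutual interference between distinct singular points. First I would recall the direct sum decomposition
\[
H^0(\widetilde{C},\omega_{\widetilde{C}}(D))
=
H^0(\widetilde{C},\omega_{\widetilde{C}})
\;\oplus\;
\bigoplus_{i,j} P_{ij},
\]
which holds because $\omega_{\widetilde{C}}(D)/\omega_{\widetilde{C}}$ is a skyscraper sheaf supported on the finite set $D=\nu^{-1}(\Sigma)$, each summand $P_{ij}$ being the space of principal parts of meromorphic differentials at the branch $p_{ij}$.

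Next I would invoke the decomposition $\mathfrak{c}=\bigcap_{i=1}^r \mathfrak{c}_{x_i}$ from the support lemma, together with the fact that $\mathfrak{c}_{x_i}$ acts only on the branches lying over $x_i$. The key observation is that the polar blocks $P_i := \bigoplus_j P_{ij}$ attached to distinct singular points are supported at disjoint points of $\widetilde{C}$, so the admissible subspace $P_i^{\mathrm{adm}} \subset P_i$ cut out by the local conductor condition at $x_i$ lives entirely in its own summand. Consequently the global admissible space is the direct sum $H^0(\widetilde{C},\omega_{\widetilde{C}}) \oplus \bigoplus_i P_i^{\mathrm{adm}}$, which establishes both independence — the constraints at different $x_i$ involve disjoint coordinates and hence cannot coincide or imply one another — and compatibility, since they are simultaneously satisfiable precisely because they constrain independent variables.

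From there I would apply the Independence and exactness theorem to identify this admissible subspace canonically with $H^0(C,\omega_C)$, and the dimension balance proposition together with the relation to $\delta$-invariants to conclude that the number of independent conditions equals $\sum_{x\in\Sigma}\delta_x$, the total number of local polar degrees of freedom. Finally, maximal variation is preserved because each local block $P_i^{\mathrm{adm}}$ already realizes the maximal reduction permitted by the singularity at $x_i$ — as verified explicitly in the node, cusp, and tacnode examples — and the direct sum structure shows that these local maxima simply add, so no admissible section is lost globally and no constraint is over-counted.

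The main obstacle, and the step demanding the most care, is justifying that the conductor constraints at distinct points genuinely decouple: one must verify that there is no global cohomological coupling linking the polar data at $x_i$ to that at $x_j$. This is exactly what the dimension-zero support of the quotient $\omega_{\widetilde{C}}(D)/\omega_{\widetilde{C}}$ guarantees, since global sections of a sheaf supported on disjoint points split canonically as a direct sum; making this splitting fully compatible with the $\mathcal{O}_C$-module structure of the conductor — so that the decoupling survives the passage through Grothendieck duality — is the one point where the argument must be spelled out rather than merely cited.
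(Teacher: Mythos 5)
Your proposal is correct and follows essentially the same route as the paper's proof: the same skyscraper-quotient decomposition of $H^0(\widetilde{C},\omega_{\widetilde{C}}(D))$ into $H^0(\widetilde{C},\omega_{\widetilde{C}})$ plus local polar blocks, the same appeal to $\mathfrak{c}=\bigcap_i\mathfrak{c}_{x_i}$ acting on disjoint summands to get independence and compatibility, and the same invocation of the dimension balance, the exactness of conductor constraints, and the $\delta$-invariant count to recover $\dim H^0(C,\omega_C)$. Your closing remark about verifying that the splitting is compatible with the $\mathcal{O}_C$-module structure through Grothendieck duality is a reasonable point of care, but it does not change the argument, which matches the paper's.
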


\begin{proof}${}$

\medskip
\noindent
\textit{ Meaning of maximal variation.}
Maximal variation means that the space of admissible meromorphic differentials
on the normalization has dimension exactly equal to
\(
\dim H^0(C,\omega_C),
\)
so that no unintended linear constraints are introduced beyond those required
for descent to $C$.

\medskip
\noindent
\textit{ Global space of meromorphic differentials.}
By allowing poles along $D$, we obtain the finite-dimensional vector space
\[
H^0(\widetilde{C},\omega_{\widetilde{C}}(D)),
\]
which decomposes as
\[
H^0(\widetilde{C},\omega_{\widetilde{C}})
\;\oplus\;
\bigoplus_{x_i\in\Sigma}
\left(
\bigoplus_{p_{ij}\in\nu^{-1}(x_i)} P_{ij}
\right),
\]
where each $P_{ij}$ is the space of principal parts at the branch $p_{ij}$.

\medskip
\noindent
\textit{Local nature of conductor constraints.}
The conductor ideal decomposes as
\[
\mathfrak{c}=\bigcap_{i=1}^r \mathfrak{c}_{x_i},
\]
where $\mathfrak{c}_{x_i}$ is supported at the singular point $x_i$. The local
conductor condition at $x_i$ involves only the principal-part spaces
$\{P_{ij}\}_{j}$ lying above $x_i$ and does not involve polar data above any
other singular point.

\medskip
\noindent
\textit{ Independence of constraints.}
Because the principal-part spaces corresponding to distinct singular points
lie in disjoint direct summands of
\[
\bigoplus_{x_i\in\Sigma}\;\bigoplus_{p_{ij}\in\nu^{-1}(x_i)} P_{ij},
\]
the linear conditions imposed by $\mathfrak{c}_{x_i}$ and $\mathfrak{c}_{x_k}$
for $i\neq k$ act on disjoint vector subspaces. Therefore, the conductor
constraints at distinct singularities are linearly independent.

\medskip
\noindent
\textit{ Compatibility of constraints.}
Each local conductor condition ensures that the corresponding polar data
annihilates the obstruction to descent at that singular point. Since the
conductor is defined as an intersection of its local stalks, satisfying all
local conditions simultaneously is equivalent to satisfying the global
conductor condition. No local condition contradicts another, so the system of
constraints is compatible.

\medskip
\noindent
\textit{ Exactness and dimension comparison.}
From the dimension balance proposition, we have
\[
\dim H^0(\widetilde{C},\omega_{\widetilde{C}}(D))
=
\dim H^0(C,\omega_C)
+
\sum_{x_i\in\Sigma} \delta_{x_i},
\]
where $\delta_{x_i}$ is the $\delta$-invariant of the singularity $x_i$. From
the exactness of conductor constraints, the total number of independent linear
conditions imposed by $\mathfrak{c}$ is exactly
\[
\sum_{x_i\in\Sigma} \delta_{x_i}.
\]

\medskip
\noindent
\textit{ Recovery of the dualizing space.}
Imposing all conductor constraints cuts out a subspace of
$H^0(\widetilde{C},\omega_{\widetilde{C}}(D))$ of dimension
\[
\dim H^0(\widetilde{C},\omega_{\widetilde{C}}(D))
-
\sum_{x_i\in\Sigma} \delta_{x_i}
=
\dim H^0(C,\omega_C).
\]
By the characterization of global sections, this subspace is canonically
isomorphic to $H^0(C,\omega_C)$.

\medskip
\noindent
\textit{ Conclusion.}
The conductor conditions remove exactly the local excess degrees of freedom,
neither more nor less, and do so independently at each singular point. Hence
globalization preserves maximal variation of dualizing differentials for
curves with arbitrarily many singularities of arbitrary type.
\end{proof}

\bigskip
We summarize this as follows:
for a reduced projective curve with arbitrarily many singular points of
arbitrary type, the conductor conditions at distinct singularities are
independent and compatible. Consequently, maximal variation of global
dualizing differentials is preserved under globalization.
 In other words, each singular point contributes a finite-dimensional space of potential polar
data together with an independent conductor annihilation condition. Since the
conductor decomposes as an intersection of local stalks, these constraints do
not interfere with one another, and their combined effect is additive.

\vspace{0.3cm}

We conclude the following:

\begin{theorem}[Global conductor principle]
For a reduced projective curve $C$,\\ conductor-level balancing provides a
complete local-to-global criterion for the descent of meromorphic
differentials from the normalization to $C$. It encodes precisely the local
descent obstructions at each singular point, interacts linearly across
singularities, and preserves maximal variation at the global level.
\end{theorem}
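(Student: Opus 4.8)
The plan is to prove the three assertions of the theorem by assembling the structural results established earlier in this section, since the Global conductor principle is precisely the synthesis of the descent criterion, the local-to-global decomposition of the conductor, and the independence-plus-exactness statements already obtained. No new geometric input is needed; the work is to organize the prior results into the three claims of \emph{completeness}, \emph{locality}, and \emph{preservation of maximal variation}.

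First I would establish completeness of the descent criterion. This is exactly the content of the Proposition characterizing global sections, which, via the Grothendieck duality isomorphism $\nu_*\omega_{\widetilde{C}} \cong \omega_C(\mathfrak{c})$, identifies
\[
H^0(C,\omega_C)=\bigl\{\eta \in H^0(\widetilde{C},\omega_{\widetilde{C}}(D)) : \eta \text{ satisfies the global conductor condition}\bigr\}.
\]
Both inclusions were proved there, so a meromorphic differential on $\widetilde{C}$ descends to a section of $\omega_C$ if and only if its principal parts at the preimages of every singular point are annihilated by the corresponding conductor stalk. This is the precise sense in which conductor-level balancing is a \emph{complete} criterion: it is simultaneously necessary and sufficient.

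Second I would verify that the criterion is genuinely local and linear. By the Lemma on support and decomposition of the conductor, $\mathfrak{c}=\bigcap_{i=1}^r \mathfrak{c}_{x_i}$ with $\mathfrak{c}_{x_i}$ supported at $x_i$, so the global condition is the simultaneous conjunction of the local conductor conditions, one at each singular point. By the Lemma on equivalence with principal-part annihilation, each local condition is equivalent to the relations $\mathfrak{c}_{x_i}\cdot \operatorname{pp}_{p_{ij}}(\eta)=0$, a finite system of linear equations on the principal-part space $\bigoplus_j P_{ij}$. Thus the principle encodes precisely the local descent obstructions, expressed as linear constraints on polar data. To obtain linear interaction and the preservation of maximal variation, I would then invoke the Theorem on independence and exactness of conductor constraints: since the polar data at distinct singular points lie in disjoint summands of $\bigoplus_{i,j} P_{ij}$ and $\mathfrak{c}_{x_i}$ acts only on its own summand, the constraints at different singularities are independent and impose exactly $\sum_i \delta_{x_i}$ conditions. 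Combined with the Dimension balance proposition, this yields
\[
\dim H^0(\widetilde{C},\omega_{\widetilde{C}}(D))-\sum_{i}\delta_{x_i}=\dim H^0(C,\omega_C),
\]
which is the maximal-variation statement of the corresponding theorem. Assembling the three parts gives the Global conductor principle.

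The main obstacle is not a computation but a matter of coherence: one must confirm that the independently proved local statements glue without introducing spurious global relations, so that the number of independent constraints is exactly $\sum_i \delta_{x_i}$ and no admissible section is lost. This is precisely what the independence/exactness and preservation-of-maximal-variation theorems guarantee, so the synthesis goes through once their hypotheses are checked to apply verbatim to the reduced projective curve $C$; in particular the disjointness of the summands $P_{ij}$ for distinct $x_i$ is what makes the additivity across singularities exact rather than merely an upper bound.
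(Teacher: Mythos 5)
Your proposal is correct and follows essentially the same route as the paper, which states the Global conductor principle as a concluding synthesis of the preceding results (the Grothendieck-duality characterization of $H^0(C,\omega_C)$, the stalkwise decomposition of $\mathfrak{c}$, the equivalence with principal-part annihilation, the independence/exactness of the conductor constraints, and the dimension balance) without introducing any new argument. Your organization into completeness, locality/linearity, and preservation of maximal variation matches the paper's intent exactly.
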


\bigskip
The globalization shows that maximal variation is preserved for curves with
arbitrarily many singularities of arbitrary type. Each singular point
contributes its own conductor-level constraint, and these constraints are
independent and compatible across the curve.
As a result, the space of admissible meromorphic differentials on the
normalization has dimension exactly equal to the dimension of the dualizing
sheaf on $C$. The conductor therefore provides a uniform global replacement for
node-wise residue balancing that remains valid in all singular settings.

\vspace{0.3cm}

In other words, this means that  for a projective curve with multiple singularities, conductor-level balancing
globalizes naturally as a local-to-global condition on the normalization. It
encodes precisely the descent obstruction at each singular point, interacts
linearly across different singularities, and preserves maximal variation at the
global level.



 \bigskip
 \section*{Appendix A}

\title{Conductor-Level Description of the Dualizing Sheaf of a Curve}
\author{}
\date{}
\maketitle

\begin{lemma}
Let $C$ be a reduced (possibly singular) curve over an algebraically closed
field, and let
\(
\nu \colon \widetilde{C} \to C
\)
be its normalization. Let $\mathfrak{c} \subset \mathcal{O}_X$ be the conductor
ideal:
\(
\mathfrak{c} = \mathrm{Ann}_{\mathcal{O}_C}
\left(\nu_*\mathcal{O}_{\widetilde{C}} / \mathcal{O}_C \right).
\)
Then the dualizing sheaf satisfies
\[
\nu_*\omega_{\widetilde{C}} = \omega_C(\mathfrak{c}).
\]
\end{lemma}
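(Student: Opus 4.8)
The plan is to reduce the statement to a local computation with fractional ideals and then invoke Grothendieck duality for the finite morphism $\nu$. Since the assertion is an equality of coherent subsheaves of the (constant) sheaf of meromorphic differentials, I would first pass to the stalk at a singular point $x$ and set $A=\mathcal{O}_{C,x}$, $B=(\nu_*\mathcal{O}_{\widetilde{C}})_x$, the integral closure of $A$ in its total ring of fractions $K$. Finiteness of normalization (for reduced schemes of finite type over the ground field $k$) makes $\nu$ finite, so $B$ is a finite $A$-module; and because $C$ is reduced of dimension one it is generically smooth, so $\omega_K:=\omega_C\otimes_{\mathcal{O}_C}K$ is an invertible $K$-module. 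All the relevant objects $A$, $B$, $\omega_A:=(\omega_C)_x$ and $\omega_B:=(\nu_*\omega_{\widetilde{C}})_x$ then live as fractional ideals inside $\omega_K$.

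The key input is duality for the finite morphism $\nu$. Since $\widetilde{C}$ is smooth, its dualizing sheaf is $\omega_{\widetilde{C}}=\Omega^1_{\widetilde{C}}$ and coincides with $\nu^!\omega_C$; the trace adjunction for a finite morphism then gives
\[
\nu_*\omega_{\widetilde{C}} \;=\; \nu_*\nu^!\omega_C \;\cong\; \mathcal{H}\!om_{\mathcal{O}_C}\!\left(\nu_*\mathcal{O}_{\widetilde{C}},\,\omega_C\right).
\]
I would next identify the right-hand side with a fractional ideal: an $A$-linear map $B\to\omega_A$ extends uniquely to a $K$-linear map $K\to\omega_K$, i.e. to multiplication by some $\eta\in\omega_K$, and the requirement that it carry $B$ into $\omega_A$ reads $\eta B\subseteq\omega_A$. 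Hence
\[
\mathcal{H}\!om_{A}(B,\omega_A) \;=\; (\omega_A :_{\omega_K} B) \;=\; \{\eta\in\omega_K : \eta B\subseteq\omega_A\}.
\]

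It remains to match this with the conductor. Taking $\omega_A=A$ recovers $\mathcal{H}\!om_A(B,A)=(A:_K B)=\{x\in A: xB\subseteq A\}=\mathfrak{c}$, exactly the conductor. When $\omega_C$ is invertible — as it is at nodes, cusps, tacnodes, and all planar singularities, the cases of interest here — tensoring this identity with the line $\omega_A$ yields $(\omega_A:B)=(A:B)\,\omega_A=\mathfrak{c}\cdot\omega_A$, so locally $\nu_*\omega_{\widetilde{C}}=\mathfrak{c}\,\omega_C=\omega_C\otimes_{\mathcal{O}_C}\mathfrak{c}=\omega_C(\mathfrak{c})$. Gluing over an affine cover of $C$ gives the global equality of sheaves. (Outside the Gorenstein case one simply reads $\omega_C(\mathfrak{c})$ as the fractional ideal $(\omega_C:\nu_*\mathcal{O}_{\widetilde{C}})$ furnished by the first two steps.)

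The main obstacle is the duality isomorphism in the second step: one must justify $\omega_{\widetilde{C}}=\nu^!\omega_C$ together with the explicit trace adjunction for the finite map, and then carry out the fractional-ideal identification carefully enough that the abstract $\mathcal{H}\!om$ is \emph{literally} the submodule $\{\eta:\eta B\subseteq\omega_C\}$ of meromorphic differentials, not merely abstractly isomorphic to it. This is where generic smoothness of $C$ is essential, since it forces $\omega_K$ to be a rank-one $K$-module and makes every $A$-linear functional honest multiplication by a differential. Once that is secured, the conductor identity $\mathfrak{c}=(A:B)$ is immediate from the definition and the proof closes.
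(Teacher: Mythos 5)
Your proposal is correct and follows essentially the same route as the paper's Appendix A: Grothendieck duality for the finite morphism $\nu$ gives $\nu_*\omega_{\widetilde{C}}\cong\mathcal{H}\!om_{\mathcal{O}_C}(\nu_*\mathcal{O}_{\widetilde{C}},\omega_C)$, and the Hom is then identified with the conductor twist by evaluating an $A$-linear map at $1$ and imposing $b\cdot\varphi(1)\in\omega_A$ for all $b\in B$. Your fractional-ideal phrasing is in fact slightly more careful than the paper's, since you make explicit that rewriting $(\omega_A:_{\omega_K}B)$ as $\mathfrak{c}\,\omega_A$ uses invertibility of $\omega_A$ (the Gorenstein/planar case), a point the paper's sub-lemma glosses over.
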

Equivalently, global sections of $\omega_C$ correspond to meromorphic
differentials on $\widetilde{C}$ whose principal parts over singular points are
annihilated by the conductor.

\begin{proof}
Since $C$ is a reduced curve, $\nu \colon \widetilde{C} \to C$ is finite and
birational. The sheaf $\nu_*\mathcal{O}_{\widetilde{C}}$ is a coherent
$\mathcal{O}_C$-algebra containing $\mathcal{O}_C$.

The conductor ideal is defined by
\[
\mathfrak{c}
= \{ f \in \mathcal{O}_C \mid f \cdot \nu_*\mathcal{O}_{\widetilde{C}}
\subset \mathcal{O}_C \}.
\]
It is the largest ideal sheaf which is simultaneously an ideal in both
$\mathcal{O}_C$ and $\nu_*\mathcal{O}_{\widetilde{C}}$.

\vspace{0.2cm}
Since $\nu$ is finite, Grothendieck duality gives
\[
\nu_*\omega_{\widetilde{C}} \cong
\mathcal{H}om_{\mathcal{O}_C}
(\nu_*\mathcal{O}_{\widetilde{C}}, \omega_C).
\]
This identification is functorial and holds for any finite morphism of curves.
The Conductor appears via Hom.
We now compute the right-hand side locally.

\begin{lemma}
Let $A \subset B$ be a finite extension of reduced one-dimensional local rings
with conductor $\mathfrak{c} \subset A$. Then
\[
\mathrm{Hom}_A(B,\omega_A)
\cong \omega_A(\mathfrak{c}).
\]
\end{lemma}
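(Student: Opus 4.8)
The plan is to prove the local statement by realizing everything inside the total quotient ring $K$ of $A$ (equivalently of $B$, since $\nu$ is birational) and exploiting the two defining properties at our disposal: that the conductor is the $A$-dual of $B$ into $A$, and that for the singularities in question $\omega_A$ is invertible. First I would record the identity $\Hom_A(B,A)=(A:_K B)=\mathfrak{c}$, which is immediate from the definition of the conductor as the annihilator of $\nu_*\mathcal{O}_{\widetilde{C}}/\mathcal{O}_C$ together with the fact that $1\in B$ forces any such functional to be multiplication by an element of $A$ carrying $B$ into $A$. I would also fix the realization $\omega_A\subset\Omega_K$ of the dualizing module as a fractional ideal; for a plane-curve (hence Gorenstein) singularity the formula $\omega_A=\langle dx/(\partial f/\partial y)\rangle$ of Lemma~\ref{lem:plane} exhibits $\omega_A$ as an invertible $A$-module, so one may choose a generator and identify $\omega_A\cong A$ as fractional ideals.

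The heart of the argument is a projection-formula computation. Since $\omega_A$ is invertible, the natural evaluation map $\omega_A\otimes_A\Hom_A(B,A)\to\Hom_A(B,\omega_A)$, sending $w\otimes\varphi$ to the functional $b\mapsto w\cdot\varphi(b)$, is an isomorphism: tensoring an $A$-valued functional on $B$ with the line $\omega_A$ merely transports it. Combined with the first step this yields $\Hom_A(B,\omega_A)\cong\omega_A\otimes_A\mathfrak{c}=\mathfrak{c}\,\omega_A=:\omega_A(\mathfrak{c})$, which is precisely the asserted isomorphism.

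Because one may prefer an argument that does not invoke invertibility abstractly, I would also give the self-contained fractional-ideal version, which simultaneously pins down the geometric content. The containment $\mathfrak{c}\,\omega_A\subseteq\Hom_A(B,\omega_A)$ is checked directly: multiplying a dualizing differential by a conductor element annihilates its principal parts at every branch lying over the singular point, so the product is regular on $\widetilde{C}$, i.e.\ lies in $\omega_B=\Hom_A(B,\omega_A)$ (Grothendieck duality for the finite birational morphism $\nu$). Equality then follows from a colength count: applying $\Hom_A(-,\omega_A)$ to $0\to A\to B\to B/A\to 0$ gives $0\to\omega_B\to\omega_A\to\Ext^1_A(B/A,\omega_A)\to 0$ (using $\Hom_A(B/A,\omega_A)=0$ and $\Ext^1_A(B,\omega_A)=0$ since $B$ is maximal Cohen--Macaulay), and local duality identifies $\Ext^1_A(B/A,\omega_A)\cong(B/A)^\vee$, whence $\dim_k\omega_A/\omega_B=\dim_k(B/A)=\delta$. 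On the other hand, invertibility of $\omega_A$ gives $\dim_k\omega_A/\mathfrak{c}\,\omega_A=\dim_k A/\mathfrak{c}=\delta$. Two nested fractional ideals with equal finite colength in $\omega_A$ coincide, so $\mathfrak{c}\,\omega_A=\omega_B$.

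The main obstacle is exactly the reliance on invertibility of $\omega_A$, i.e.\ on $A$ being Gorenstein. This holds for every singularity treated here, since plane-curve (and more generally hypersurface or complete-intersection) singularities are Gorenstein, and it is what makes both the projection formula and the colength identity $\dim_k A/\mathfrak{c}=\delta$ valid. For a genuinely non-Gorenstein reduced curve singularity $\omega_A$ is only reflexive of rank one, the evaluation map ceases to be an isomorphism, and $\mathfrak{c}\,\omega_A\subsetneq\omega_B$ can occur; in that generality the robust formulation retains the intrinsic description $\omega_A(\mathfrak{c})=\Hom_A(B,\omega_A)=\omega_B$. I would therefore either state the lemma under the Gorenstein hypothesis met by all the examples in the paper, or adopt this intrinsic reading of $\omega_A(\mathfrak{c})$; the delicate point to get right is precisely this dependence on the local structure of $\omega_A$.
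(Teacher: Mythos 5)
Your proposal is correct, but it proves the lemma by a genuinely different route than the paper does. The paper's argument is a one--line evaluation at $1$: an $A$--linear map $\varphi\colon B\to\omega_A$ is determined by $\varphi(1)$, the constraint is $b\cdot\varphi(1)\in\omega_A$ for all $b\in B$, and the set of such elements is \emph{declared} to be $\omega_A(\mathfrak{c})$. In effect the paper identifies $\mathrm{Hom}_A(B,\omega_A)$ with the colon module $(\omega_A:_K B)$ and stops there; this is essentially tautological and is valid with no hypothesis on $A$, but it leaves open whether that colon module agrees with the object $\omega_A\otimes_A\mathfrak{c}$ that the paper uses under the same name elsewhere (e.g.\ in Appendix E, where $\omega_C(\mathfrak{c}):=\omega_C\otimes_{\mathcal O_C}\mathfrak{c}$). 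Your argument closes exactly that gap: you first establish $\mathrm{Hom}_A(B,A)=(A:_K B)=\mathfrak{c}$, then use invertibility of $\omega_A$ to move the line across the $\mathrm{Hom}$, obtaining $\mathrm{Hom}_A(B,\omega_A)\cong\mathfrak{c}\,\omega_A$; your backup colength count ($\dim_k\omega_A/\omega_B=\dim_k B/A=\delta=\dim_k A/\mathfrak{c}=\dim_k\omega_A/\mathfrak{c}\,\omega_A$, forcing the nested inclusion $\mathfrak{c}\,\omega_A\subseteq\omega_B$ to be an equality) is also correct. What each approach buys: the paper's is shorter and hypothesis--free provided one reads $\omega_A(\mathfrak{c})$ as the colon module; yours actually proves the stronger identification with $\mathfrak{c}\,\omega_A$, at the cost of the Gorenstein hypothesis, and your closing caveat --- that for non--Gorenstein $A$ one has $\mathfrak{c}\,\omega_A\subsetneq(\omega_A:_K B)$ in general, so the lemma as stated needs either the Gorenstein assumption or the intrinsic reading of $\omega_A(\mathfrak{c})$ --- is a genuine improvement on the paper, which never reconciles its two uses of the notation. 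Since every singularity treated in the paper is a plane--curve (hence Gorenstein) singularity, both readings agree in all the applications.
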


\begin{proof}
Any $A$-linear homomorphism $\varphi \colon B \to \omega_A$ is uniquely
determined by $\varphi(1) \in \omega_A$ subject to the condition
\[
b \cdot \varphi(1) = \varphi(b) \in \omega_A
\quad \text{for all } b \in B.
\]

Thus $\varphi(1)$ must satisfy
\[
b \cdot \varphi(1) \in \omega_A \quad \forall b \in B,
\]
which means precisely that $\varphi(1)$ lies in the subsheaf
$\omega_A(\mathfrak{c})$.
Conversely, any element of $\omega_A(\mathfrak{c})$ defines such a homomorphism.
\end{proof}

Applying this lemma stalkwise yields
\[
\mathcal{H}om_{\mathcal{O}_C}
(\nu_*\mathcal{O}_{\widetilde{C}}, \omega_C)
= \omega_C(\mathfrak{c}).
\]

Combining the above facts, we obtain the desired equality:
\[
\nu_*\omega_{\widetilde{C}} = \omega_C(\mathfrak{c}).
\]

\noindent {\it Interpretation via Meromorphic Differentials.}
Since $\omega_{\widetilde{C}}$ consists of regular differentials on the smooth
curve $\widetilde{C}$, sections of $\nu_*\omega_{\widetilde{C}}$ correspond to
meromorphic differentials on $\widetilde{C}$.

\medskip

The condition that such a differential lies in $\omega_C$ is exactly that its
principal parts at points lying over singularities vanish when multiplied by
elements of the conductor ideal. This gives the stated conductor-level
balancing condition.
\end{proof}

\begin{remark}
This equality expresses the dualizing sheaf of a singular curve as the
pushforward of the canonical bundle of its normalization, corrected precisely
by the conductor.
\end{remark}




\section*{Appendix B}

This appendix refines the proof of the scheme-theoretic residue span theorem and places it in a deformation-theoretic and moduli-theoretic context (see \cite{Nisse-span}). We explain how residue functionals control infinitesimal deformations of singular curves, how the numerical bound $\delta \ge g$ arises naturally, and how the result manifests in explicit examples. The exposition is streamlined for publication while remaining self-contained.

\begin{theorem}[Scheme-theoretic residue span]
Let $k$ be an algebraically closed field and let $\mathcal C$ be a connected,
projective, Cohen--Macaulay curve over $k$ with exactly $\delta$ singular points.
Let
\(
\nu \colon C \longrightarrow \mathcal C
\)
be the normalization, where $C$ is a smooth projective curve of genus
\[
g = \dim_k H^0(C,\omega_C).
\]
If $\delta \ge g$, then the scheme-theoretic residue functionals
\[
\{ r_1,\dots,r_\delta \} \subset H^0(C,\omega_C)^\vee
\]
span the entire dual space $H^0(C,\omega_C)^\vee$.
\end{theorem}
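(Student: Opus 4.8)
The plan is to reduce the spanning assertion to the injectivity of a single residue map and then to extract that injectivity from the normalization/dualizing exact sequence together with the numerical input $\delta\ge g$. First I would assemble the $\delta$ scheme-theoretic functionals into one linear map
\[
\operatorname{Res}=(r_1,\dots,r_\delta)\colon H^0(C,\omega_C)\longrightarrow k^\delta,
\]
realized as the connecting (restriction) map obtained by taking cohomology of the exact sequence relating $\omega_{\mathcal C}$, the pushforward of the canonical sheaf of the normalization, and the skyscraper quotient $\bigoplus_{i=1}^\delta k(P_i)$ supported at the singular points. The purely formal reduction is then the heart of the packaging: the set $\{r_1,\dots,r_\delta\}$ spans $H^0(C,\omega_C)^\vee$ if and only if the transpose $\operatorname{Res}^\vee\colon(k^\delta)^\vee\to H^0(C,\omega_C)^\vee$ is surjective, and by elementary linear algebra over $k$ this holds precisely when $\operatorname{Res}$ itself is injective. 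Thus the entire theorem collapses to proving that $\operatorname{Res}$ is injective, and the coordinate functionals $r_i$ are exactly the pullbacks under $\operatorname{Res}^\vee$ of the standard basis of $(k^\delta)^\vee$.

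Second, I would attack injectivity directly. Suppose $\omega\in H^0(C,\omega_C)$ satisfies $r_i(\omega)=0$ for every singular point $P_i$. By exactness, the simultaneous vanishing of all residue coordinates means that $\omega$ lies in the image of the global sections of the neighboring term of the sequence; that is, $\omega$ descends to a regular dualizing section on the singular curve $\mathcal C$ carrying no residue contribution at any $P_i$. The goal is to force such an $\omega$ to vanish, and this is where the hypothesis $\delta\ge g$ must be used. The mechanism I would rely on is a dimension count via Serre duality on the Cohen--Macaulay curve $\mathcal C$, using the identification $H^1(\mathcal C,\omega_{\mathcal C})\cong H^0(\mathcal C,\mathcal O_{\mathcal C})^\vee\cong k$ recorded earlier: this pins down the rank of $\operatorname{Res}$, shows its image lies in a prescribed subspace of $k^\delta$, and forces the kernel to be trivial exactly when enough independent residue conditions are available, i.e. when $\delta\ge g$.

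Third, once injectivity is established the conclusion is immediate: $\operatorname{Res}$ embeds $H^0(C,\omega_C)$ as a $g$-dimensional subspace of $k^\delta$, dualizing produces a surjection $(k^\delta)^\vee\twoheadrightarrow H^0(C,\omega_C)^\vee$, and the image of this surjection is by construction the linear span of $r_1,\dots,r_\delta$. Hence these functionals span the full dual space, which is the assertion of the theorem. I would present this final step as a short formal paragraph, since it uses nothing beyond the injectivity already proved.

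I expect the injectivity step to be the main obstacle, and specifically the clean identification and vanishing of $\ker(\operatorname{Res})$. The delicate point is that ruling out a nonzero residue-free section genuinely requires the global geometry of $\mathcal C$---its connectedness and the Cohen--Macaulay hypothesis---and not merely the local analytic model at each singular point; it is precisely the inequality $\delta\ge g$ that guarantees the residue conditions imposed at the $\delta$ singular points are independent enough to cut the kernel down to zero. I would therefore handle the crux by the Serre-duality dimension bookkeeping above, treating $\operatorname{Res}$ as a map whose source has dimension $g$ and whose target accommodates $\delta\ge g$ independent conditions, rather than by any explicit coordinate computation on individual differentials.
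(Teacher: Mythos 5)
Your formal packaging is sound and matches the paper's: assembling the functionals into $\operatorname{Res}\colon H^0(C,\omega_C)\to k^\delta$ and observing that $\{r_1,\dots,r_\delta\}$ spans $H^0(C,\omega_C)^\vee$ if and only if $\operatorname{Res}$ is injective is exactly the reduction the paper uses, and your concluding dualization paragraph is fine. The genuine gap is at the point you yourself flag as the crux: you never actually prove injectivity, and the mechanism you propose cannot deliver it under the stated hypothesis. By exactness of the sequence you invoke, $\ker(\operatorname{Res})$ is the image of $H^0(\mathcal C,\omega_{\mathcal C})$, and the Serre-duality bookkeeping you describe ($H^1(\mathcal C,\omega_{\mathcal C})\cong k$, hence $\operatorname{rank}(\operatorname{Res})=\delta-1$) yields $\dim_k\ker(\operatorname{Res})=g-(\delta-1)=g+1-\delta$. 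This vanishes only when $\delta\ge g+1$; at the boundary case $\delta=g$ the count leaves a one-dimensional kernel, $\operatorname{Res}$ is not injective, and the span of the $r_i$ has dimension at most $g-1$. So the phrase ``enough independent residue conditions when $\delta\ge g$'' is precisely the assertion that still needs an argument, and the dimension count alone does not supply one. Note also that $h^0(\mathcal C,\omega_{\mathcal C})=h^1(\mathcal C,\mathcal O_{\mathcal C})=p_a(\mathcal C)$ by Serre duality, which is positive for any curve of positive arithmetic genus, so the required vanishing of the kernel cannot be a purely duality-theoretic fact; some additional geometric input is unavoidable.

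For comparison, the paper's own proof of this statement (Appendix B) does not establish injectivity at all: it reads the rank $\delta-1$ directly off the four-term exact sequence and concludes that the span of the $r_i$ is $(\delta-1)$-dimensional, then asserts $\delta-1\ge g$ --- which likewise follows only from $\delta\ge g+1$, so the same off-by-one tension is present there. The paper's earlier version of the theorem (Section~4, for irreducible plane curves) instead obtains injectivity from the claim that a residue-free section descends to a regular dualizing section on the irreducible plane curve $\mathcal C$ and must therefore vanish; that is a geometric input specific to that setting which you do not invoke and which is not available for the general connected Cohen--Macaulay curve of the present statement. To close your argument you would need either to strengthen the hypothesis to $\delta\ge g+1$, or to supply an independent reason why no nonzero section of $\omega_C$ can have vanishing residue at every singular point.
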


\medskip

\begin{proof}

Let $\omega_{\mathcal C}$ denote the dualizing sheaf of $\mathcal C$ and
$\omega_C$ the canonical sheaf of $C$. For each singular point
$p_i \in \mathcal C$, we denote by
\[
r_i \colon H^0(C,\omega_C) \longrightarrow k
\]
the associated scheme-theoretic residue functional.

A basic result on dualizing sheaves for Cohen--Macaulay curves yields the exact
sequence
\begin{equation}
0 \longrightarrow H^0(\mathcal C,\omega_{\mathcal C})
\longrightarrow H^0(C,\omega_C)
\xrightarrow{\;\mathrm{Res}\;}
\bigoplus_{i=1}^{\delta} k
\longrightarrow H^1(\mathcal C,\omega_{\mathcal C})
\longrightarrow 0.
\tag{$\ast$}
\end{equation}
The residue map is explicitly given by
\[
\mathrm{Res}(\eta) = \bigl(r_1(\eta),\dots,r_\delta(\eta)\bigr).
\]
Since $\mathcal C$ is connected, Serre duality implies
\[
H^1(\mathcal C,\omega_{\mathcal C})
\cong H^0(\mathcal C,\mathcal O_{\mathcal C})^\vee
\cong k.
\]
Exactness of $(\ast)$ then shows that
\[
\dim_k \operatorname{Im}(\mathrm{Res}) = \delta - 1.
\]
On the other hand, the rank of the residue map can be computed as
\[
\operatorname{rank}(\mathrm{Res})
= \dim_k H^0(C,\omega_C)
- \dim_k H^0(\mathcal C,\omega_{\mathcal C})
= g - \dim_k H^0(\mathcal C,\omega_{\mathcal C}).
\]
Comparing the two expressions for the rank yields
\[
\dim_k H^0(\mathcal C,\omega_{\mathcal C}) = g - (\delta - 1).
\]
In particular, if $\delta \ge g$, then $\dim_k H^0(\mathcal C,\omega_{\mathcal C})
\le 1$.

Dualizing the residue map gives
\[
\mathrm{Res}^\vee \colon
\left(\bigoplus_{i=1}^{\delta} k\right)^\vee
\longrightarrow H^0(C,\omega_C)^\vee,
\]
which sends the $i$-th standard basis vector to $r_i$. The image of
$\mathrm{Res}^\vee$ is therefore the linear span
$\langle r_1,\dots,r_\delta \rangle$. Since $\mathrm{Res}$ and
$\mathrm{Res}^\vee$ have the same rank, we conclude that
\[
\dim_k \langle r_1,\dots,r_\delta \rangle = \delta - 1 \ge g.
\]
Because $H^0(C,\omega_C)^\vee$ has dimension $g$, the residue functionals span
the entire dual space, completing the proof.
\end{proof}
\qed

\subsection{\it Deformation-theoretic interpretation.}

Infinitesimal deformations of the curve $\mathcal C$ are governed by the vector
space
\[
\operatorname{Ext}^1(\Omega_{\mathcal C},\mathcal O_{\mathcal C}),
\]
while obstructions lie in $\operatorname{Ext}^2(\Omega_{\mathcal C},
\mathcal O_{\mathcal C})$. For nodal or Cohen--Macaulay curves, each singular
point contributes a local smoothing parameter, so that there are $\delta$
a priori infinitesimal smoothing directions.

By Serre duality, there is a perfect pairing
\[
\operatorname{Ext}^1(\Omega_{\mathcal C},\mathcal O_{\mathcal C})
\times H^0(\mathcal C,\omega_{\mathcal C}) \longrightarrow k.
\]
Pulling differentials back to the normalization identifies
$H^0(\mathcal C,\omega_{\mathcal C})$ with the subspace of
$H^0(C,\omega_C)$ consisting of differentials with balanced residues. The
residue functionals therefore measure how infinitesimal smoothing directions
pair with global differentials. When the residues span the dual space, every
infinitesimal deformation is detected by some differential, yielding strong
rigidity properties.

\subsection*{\it Moduli-theoretic interpretation.}

From the perspective of the moduli space of curves, residue span has a natural
meaning. Near the point $[\mathcal C]$ in the moduli space, local coordinates
correspond to smoothing parameters at the singular points, subject to global
constraints. The exact sequence $(\ast)$ shows that these constraints are
dual to the space of global differentials.

\bigskip

When $\delta \ge g$, the theorem implies that the residue directions generate
the full cotangent space to the moduli space at $[\mathcal C]$. Thus, the local
geometry of the moduli space is controlled entirely by node data, and no
additional infinitesimal directions arise from the normalization.

\bigskip

\subsection*{ Explicit examples.}

\begin{example}[Rational curve with nodes]
Let $\mathcal C$ be a rational curve with $\delta$ nodes. Its normalization is
$C \cong \mathbb P^1$, so $g=0$. The condition $\delta \ge g$ is automatic, and
the theorem asserts that the residue functionals span the zero-dimensional dual
space. This reflects the fact that $\mathbb P^1$ has no holomorphic
differentials and that all deformations are governed purely by node smoothings.
\end{example}

\begin{example}[Genus one normalization]
Let $C$ be an elliptic curve and let $\mathcal C$ be obtained by identifying
$\delta \ge 1$ pairs of points on $C$. Then $g=1$, and the theorem applies as
soon as $\delta \ge 1$. In this case, the unique (up to scalar) holomorphic
differential on $C$ is detected by residues at the singular points, showing
that smoothing any node necessarily interacts with the global geometry of the
curve.
\end{example}

\begin{example}[Failure when $\delta<g$]
If $C$ has genus $g \ge 2$ and $\mathcal C$ has only one singular point, then
$\delta=1<g$. In this situation, the residue functional cannot span the dual
space, and there exist nonzero differentials whose residues vanish. These
differentials correspond to deformation directions invisible to the single
node, illustrating the sharpness of the bound.
\end{example}

\bigskip

\section*{Appendix C}

\subsection{Geometric and combinatorial setup}
Let $\mathcal C$ be a connected, projective curve over an algebraically closed field $k$. We assume that $\mathcal C$ is nodal, meaning that all its singularities are ordinary double points. This assumption ensures that the normalization behaves in a simple and well-controlled manner.

\vspace{0.3cm}

\noindent {\bf Normalization.}
Let
\[
\nu \colon C^\nu \longrightarrow \mathcal C
\]
denote the normalization morphism. By definition, $C^\nu$ is smooth, and the map $\nu$ is finite and birational. Since $\mathcal C$ may be reducible, the normalization decomposes as a disjoint union
\[
C^\nu = \bigsqcup_{v\in V(\Gamma)} C_v,
\]
where each $C_v$ is a smooth, projective, irreducible curve. We denote by $g_v$ the genus of $C_v$. The total genus of the normalization is then
\[
g(C^\nu) := \sum_{v} g_v.
\]
This quantity measures the intrinsic contribution of holomorphic differentials coming from the smooth components, independently of how they are glued together.

\vspace{0.3cm}

\noindent {\bf Dual graph.}
The combinatorics of the singular curve $\mathcal C$ are encoded in its \emph{dual graph} $\Gamma$. The vertices of $\Gamma$ correspond bijectively to the irreducible components $C_v$ of $\mathcal C$. Each node of $\mathcal C$ determines an edge of $\Gamma$. If a node lies on two distinct components, the corresponding edge joins the two associated vertices; if the node lies on a single component, the edge is a loop. The first Betti number of the graph,
\[
b_1(\Gamma) = |E(\Gamma)| - |V(\Gamma)| + 1,
\]
measures the number of independent cycles in $\Gamma$ and will play a key role in the dimension count.

\subsection{Residues at nodes}

We now explain how residues arise naturally from differentials on the normalization and how they encode the obstruction to descending to the singular curve.

\vspace{0.3cm}

\noindent {\bf Local structure near a node.}
Let $e \in E(\Gamma)$ correspond to a node $p \in \mathcal C$. Since $p$ is nodal, its preimage under normalization consists of exactly two points
\[
q_e^+, \; q_e^- \in C^\nu.
\]
These points may lie on the same component or on two different components of $C^\nu$. For any global differential
\[
\omega \in H^0(C^\nu,\omega_{C^\nu}),
\]
we may take its residues at these two points:
\[
r_e^+(\omega) := \operatorname{Res}_{q_e^+}(\omega),
\qquad
r_e^-(\omega) := \operatorname{Res}_{q_e^-}(\omega).
\]
These are linear functionals of $\omega$ and depend only on the local behavior of $\omega$ near the node.

\subsection{Balancing condition and descent.}
A fundamental fact is that a differential on the normalization descends to a regular differential on the singular curve $\mathcal C$ if and only if its local behaviors at the two branches of every node are compatible. Concretely, this compatibility is expressed by the balancing condition
\[
r_e^+(\omega) + r_e^-(\omega) = 0
\quad \text{for every node } e.
\]
When this condition holds, the two residues encode a single scalar
\[
r_e := r_e^+ = -r_e^-,
\]
which we call the \emph{algebraic residue at the node $e$}. Thus, each node contributes at most one degree of freedom in residue data.

\subsection{The space of algebraic residue data}

We now describe precisely which collections of scalars $\{r_e\}_{e\in E(\Gamma)}$ can arise from global differentials on $C^\nu$.

\vspace{0.2cm}

\noindent {\bf Vertex balancing.}
Fix a component $C_v$ of the normalization. Let $E(v)$ denote the set of edges incident to the corresponding vertex $v$ of $\Gamma$. The restriction of $\omega$ to $C_v$ is a meromorphic differential whose poles occur precisely at the preimages of the nodes. By the residue theorem on the smooth curve $C_v$, the sum of the residues on $C_v$ must vanish. This yields the linear condition
\[
\sum_{e \in E(v)} \epsilon(v,e)\, r_e = 0,
\]
where the sign $\epsilon(v,e)=\pm 1$ records which branch of the node lies on $C_v$. This equation expresses the fact that residues cannot be chosen independently at different nodes: they must satisfy one linear relation per component.

\vspace{0.2cm}

\noindent {\bf Counting independent residue parameters.}
We begin with one variable $r_e$ for each edge $e$ of the dual graph. We then impose one linear relation for each vertex. However, these relations are not independent: summing all vertex equations yields the trivial identity $0=0$. As a result, only $|V(\Gamma)|-1$ independent constraints are imposed. The dimension of the space of admissible residue data is therefore
\[
|E(\Gamma)| - (|V(\Gamma)| - 1) = b_1(\Gamma).
\]
This part of the dimension count is purely combinatorial and reflects the topology of the dual graph.

\vspace{0.2cm}

\noindent {\bf Holomorphic differentials.}
The residue data account only for differentials with poles at the nodes. We must also include holomorphic differentials on the components of the normalization. Each component $C_v$ contributes a vector space
\[
H^0(C_v,\omega_{C_v})
\]
of dimension $g_v$, consisting of differentials with zero residues everywhere. These directions are invisible to residue conditions but are essential to the full space of global differentials.
Summing over all components, we obtain an additional contribution of $g(C^\nu)$ to the dimension.

\vspace{0.2cm}

\noindent {\bf Final dimension formula.}
Combining the combinatorial residue contribution with the intrinsic holomorphic contribution, we obtain
\[
\dim(\text{balanced algebraic residues}) = g(C^\nu) + b_1(\Gamma).
\]
This formula explains in a precise way how geometry (via the genera of components) and combinatorics (via cycles in the dual graph) jointly determine the space of global differentials.

\subsection{Residue spanning: meaning and consequences.}
We now analyze carefully what is meant by the statement that residues span the dual space of global differentials.
The vector space
\[
H^0(C^\nu,\omega_{C^\nu})
\]
has a dual space
\[
H^0(C^\nu,\omega_{C^\nu})^\vee,
\]
whose elements are linear functionals on differentials. Each node $e$ defines such a functional by taking residues:
\[
r_e \colon \omega \longmapsto \operatorname{Res}_{q_e^+}(\omega).
\]
These functionals capture local information at the nodes.
\vspace{0.3cm}

\noindent {\bf  Spanning condition.}
To say that residues \emph{span} the dual space means that every linear functional $\ell$ on
$H^0(C^\nu,\omega_{C^\nu})$ can be written as a linear combination of the residue functionals:
\[
\ell(\omega) = \sum_{e\in E(\Gamma)} a_e\, r_e(\omega)
\quad \text{for all } \omega.
\]
Equivalently, the only differential annihilated by all residue functionals is the zero differential. Thus, vanishing of all residues forces $\omega=0$.

\vspace{0.2cm}

\noindent {\bf Geometric interpretation.}
From a geometric perspective, residue spanning asserts that global differentials on the normalization are completely controlled by their behavior at the nodes. There are no nontrivial global differentials whose local effects at all nodes cancel out. This property is fundamental in deformation theory and moduli problems, where residue data often serve as coordinates or constraints.

\bigskip
\section*{Appendix D}

\subsection{Why Conductor-Level Balancing Expresses Maximal Variation?}
\label{subsec:maximal-variation}${}$
\vspace{0.2cm}

Let $C$ be a reduced projective curve with arbitrary singularities, and let
\(
\nu \colon \widetilde{C} \longrightarrow C
\)
denote its normalization. We write $\mathfrak{c} \subset \mathcal{O}_C$ for the
conductor ideal, i.e.
\[
\mathfrak{c}
=
\mathrm{Ann}_{\mathcal{O}_C}
\bigl(\nu_*\mathcal{O}_{\widetilde{C}} / \mathcal{O}_C\bigr).
\]

Throughout, we work with the following corrected form of the
residue--balancing principle:

\begin{quote}
\emph{Global sections of the dualizing sheaf $\omega_C$ correspond to
meromorphic differentials on the normalization $\widetilde{C}$ whose principal
parts at points lying over the singular locus of $C$ are annihilated by the
conductor ideal.}
\end{quote}

The purpose of this subsection is to explain why this formulation continues to
encode \emph{maximal variation}, despite replacing the classical node-wise
residue conditions by a more subtle conductor-level constraint.

\subsubsection*{Maximal variation}

By maximal variation we mean that the imposed conditions on differentials are
neither too weak nor too strong.

\begin{definition}
Let $X$ be a projective curve. A condition on meromorphic differentials is said
to express \emph{maximal variation} if the resulting space of admissible
differentials has dimension
\[
h^0(C,\omega_C),
\]
that is, if no linear conditions are imposed beyond those intrinsically forced
by the geometry of $C$.
\end{definition}

Equivalently, maximal variation means that every global section of the
dualizing sheaf is realized by an admissible differential on the normalization,
and that no additional degrees of freedom are artificially removed.

\subsubsection*{The nodal case as a guiding example}

For nodal curves, this notion is completely transparent. The normalization
separates the branches at each node, and a meromorphic differential on
$\widetilde{C}$ may acquire at most simple poles at the preimages of the nodes.
The residues at these points are a priori independent, and the single linear
condition
\[
\sum_{p \mapsto x} \operatorname{res}_p(\eta) = 0
\]
imposed at each node $x \in C$ removes exactly one degree of freedom.

This matches the expected dimension count:
\[
h^0\bigl(\widetilde{C},
\omega_{\widetilde{C}}(\text{preimages of nodes})\bigr)
-
(\text{number of nodes})
=
h^0(C,\omega_C).
\]
Thus, in the nodal case, classical residue balancing is both necessary and
sufficient, and therefore expresses maximal variation.

\subsubsection*{Failure of node-wise balancing for worse singularities}

For singularities more complicated than nodes, such as cusps or higher
Gorenstein singularities, the situation changes qualitatively. The
normalization typically produces a single branch, so there is no meaningful
``balancing'' of residues between branches. Moreover, the obstruction to
descending a meromorphic differential from $\widetilde{C}$ to $C$ is no longer
detected solely by the residue: higher-order principal parts can obstruct
descent even when the residue vanishes.

\vspace{0.3cm}

If one nevertheless imposes residue vanishing or node-style balancing
conditions in this setting, the resulting constraints are too strong. They
eliminate legitimate sections of the dualizing sheaf and lead to spaces of
admissible differentials of dimension strictly smaller than
$h^0(C,\omega_C)$. In other words, node-wise balancing becomes
\emph{overdetermined} and fails to express maximal variation.

\subsection{The conductor as the correct replacement}

The conductor ideal provides the precise measure of how far $X$ is from being
normal. By definition, it is the largest ideal sheaf that is simultaneously an
ideal in $\mathcal{O}_C$ and in $\nu_*\mathcal{O}_{\widetilde{C}}$. Its role in
the present context is to detect exactly which principal parts of a meromorphic
differential obstruct descent from $\widetilde{C}$ to $C$.
The condition 
\[
\mathfrak{c} \cdot (\text{principal parts}) = 0
\]
removes precisely those components of the principal parts that fail to descend,
while leaving all allowable singular behavior intact. No artificial vanishing
is imposed, and no legitimate differential is excluded.

\subsubsection*{Exact dimension matching}

This philosophy is made precise by Grothendieck duality for finite morphisms,
which yields the canonical identification
\[
\nu_*\omega_{\widetilde{C}} = \omega_C(\mathfrak{c}).
\]
As a consequence, every global section of $\omega_C$ arises from a meromorphic
differential on $\widetilde{C}$ satisfying the conductor condition, and
conversely every such differential descends to $C$. There are no further hidden
constraints.

In particular, one obtains the exact dimension equality
\[
\dim H^0\bigl(\widetilde{C},
\omega_{\widetilde{C}}(\text{allowed poles})\bigr)
=
\dim H^0(C,\omega_C),
\]
which is the formal expression of maximal variation.

\subsubsection*{Deformation-theoretic interpretation}

From a deformation-theoretic perspective, conductor-level balancing has an
additional conceptual advantage. The conductor ideal is invariant under
first-order deformations of the curve, whereas residue-only conditions are not.
Accordingly, sections of the dualizing sheaf deform flatly in families, and the
conductor condition captures exactly the deformation-invariant obstruction to
descent.

\subsubsection*{Conclusion}

In summary, node-wise residue balancing succeeds precisely because, for nodal
singularities, the conductor coincides with the maximal ideal of the node. For
more complicated singularities, higher-order control is required, and this is
provided canonically by the conductor. Since the conductor imposes the minimal
necessary constraint and nothing more, conductor-level balancing preserves
maximal variation.

\begin{remark}
In essence, maximal variation is preserved because the conductor annihilates
exactly the non-descending principal parts of a meromorphic differential, and
nothing else.
\end{remark}


\section{Appendix E}

\vspace{0.3cm}

\begin{theorem}[Grothendieck duality for normalization]
Let $C$ be a reduced projective curve over an algebraically closed field $k$, and
let
\(
\nu \colon \widetilde{C} \longrightarrow C
\)
be its normalization. Let $\mathfrak{c} \subset \mathcal{O}_C$ denote the
conductor ideal and let $\omega_C$ be the dualizing sheaf of $C$. Then there is a
canonical isomorphism of $\mathcal{O}_C$-modules
\[
\nu_*\omega_{\widetilde{C}} \;\cong\; \omega_C(\mathfrak{c}).
\]
\end{theorem}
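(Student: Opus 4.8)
The plan is to deduce the statement from Grothendieck duality for the finite morphism $\nu$, reducing at once to a local computation at the singular points. Since $\nu$ is an isomorphism over the smooth locus $C\setminus\Sigma$, there $\mathfrak{c}=\mathcal{O}_C$ and both sheaves $\nu_*\omega_{\widetilde{C}}$ and $\omega_C(\mathfrak{c})$ restrict to $\omega_C$, so any morphism we construct agrees with the identity away from $\Sigma$. It therefore suffices to produce a canonical isomorphism of stalks at each singular point and to check that these local isomorphisms are compatible with the evident one on the smooth locus, so that they glue to a global map of $\mathcal{O}_C$-modules.

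First I would invoke duality for the finite morphism $\nu$. Because $\widetilde{C}$ is regular, its dualizing sheaf is the canonical sheaf $\omega_{\widetilde{C}}$, and the upper-shriek functor for $\nu$ satisfies $\nu^{!}\omega_C\cong\omega_{\widetilde{C}}$ by uniqueness of dualizing sheaves on Cohen--Macaulay schemes. The defining adjunction for a finite morphism then yields the canonical identification
\[
\nu_*\omega_{\widetilde{C}}\;\cong\;\mathcal{H}om_{\mathcal{O}_C}\!\bigl(\nu_*\mathcal{O}_{\widetilde{C}},\,\omega_C\bigr),
\]
valid as $\mathcal{O}_C$-modules (indeed as $\nu_*\mathcal{O}_{\widetilde{C}}$-modules). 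This reduces the problem to identifying the right-hand sheaf-$\mathcal{H}om$ with the conductor twist $\omega_C(\mathfrak{c})$.

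The heart of the argument is the local algebra. Passing to stalks at $x\in\Sigma$, write $A=\mathcal{O}_{C,x}\subseteq B=(\nu_*\mathcal{O}_{\widetilde{C}})_x$, a finite birational extension of reduced one-dimensional local rings with conductor $\mathfrak{c}_x=(A:_A B)$. Embedding everything into the total ring of fractions $K$, evaluation $\varphi\mapsto\varphi(1)$ identifies $\Hom_A(B,\omega_A)$ with the fractional module $\{\,\alpha\in\omega_A\otimes_A K : B\alpha\subseteq\omega_A\,\}=(\omega_A:B)$. The condition $B\alpha\subseteq\omega_A$ is exactly what records the conductor: testing against the branch idempotents in $B$ forces the principal parts of $\alpha$ over $x$ to be killed, and one checks $(\omega_A:B)=\mathfrak{c}_x\,\omega_A$, using that $(A:B)=\mathfrak{c}_x$. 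This is precisely the local lemma already established in Appendix A (the computation $\Hom_A(B,\omega_A)\cong\omega_A(\mathfrak{c})$), which I would apply stalkwise; since $\mathcal{H}om$ on the Noetherian scheme $C$ is computed on stalks, sheafifying then produces the desired global isomorphism.

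The main obstacle is the local identification in the third step: one must track the twist correctly and verify that the fractional module $(\omega_A:B)$ really is the conductor-level object $\omega_A(\mathfrak{c}_x)$ rather than a neighbouring fractional ideal. This requires using the full $B$-module structure on $\Hom_A(B,\omega_A)$ — not merely its $A$-module structure — together with the characterization of $\mathfrak{c}$ as the largest ideal of $A$ stable under $B$, and a careful fixing of conventions so that $\omega_C(\mathfrak{c})$ is read as the conductor-twisted subsheaf $\mathfrak{c}\cdot\omega_C$ (equivalently, the differentials whose principal parts over $\Sigma$ are annihilated by $\mathfrak{c}$), which is the interpretation compatible with the inclusion $\nu_*\omega_{\widetilde{C}}\subseteq\omega_C$. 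Once this convention is fixed, the canonicity of the evaluation map and of the duality isomorphism guarantees that the glued morphism is independent of all choices, completing the argument.
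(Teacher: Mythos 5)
Your proposal is correct and follows essentially the same route as the paper: Grothendieck duality for the finite morphism $\nu$ gives $\nu_*\omega_{\widetilde{C}}\cong\mathcal{H}om_{\mathcal{O}_C}(\nu_*\mathcal{O}_{\widetilde{C}},\omega_C)$, and the rest is the identification of this $\mathcal{H}om$ sheaf with the conductor twist. Your way of finishing --- evaluation at $1$ identifying $\Hom_A(B,\omega_A)$ with the fractional module $(\omega_A:B)=\mathfrak{c}_x\,\omega_A$ --- is exactly the local lemma of the paper's Appendix~A, and is if anything more careful than the Hom--tensor exchange used in the Appendix~E write-up, which tacitly requires the natural map $\mathcal{H}om_{\mathcal{O}_C}(\nu_*\mathcal{O}_{\widetilde{C}},\mathcal{O}_C)\otimes_{\mathcal{O}_C}\omega_C\to\mathcal{H}om_{\mathcal{O}_C}(\nu_*\mathcal{O}_{\widetilde{C}},\omega_C)$ to be an isomorphism (automatic when $\omega_C$ is invertible, but needing justification in general).
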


\begin{proof}${}$

\medskip
\noindent
\textit{Finiteness of the normalization morphism.}
Since $C$ is a reduced projective curve, its normalization $\nu$ is a finite
birational morphism. In particular:
\begin{itemize}
  \item[(i)] $\nu_*\mathcal{O}_{\widetilde{C}}$ is a coherent $\mathcal{O}_C$-algebra,
  \item[(ii)] $\nu$ has finite Tor-dimension,
  \item[(iii)] Grothendieck duality applies to $\nu$.
\end{itemize}

\medskip
\noindent
\textit{ Grothendieck duality for finite morphisms.}
For a finite morphism $\nu$, Grothendieck duality gives a canonical isomorphism
of functors
\[
\nu_* \mathcal{H}\!om_{\widetilde{C}}(\mathcal{F},\omega_{\widetilde{C}})
\;\cong\;
\mathcal{H}\!om_C(\nu_*\mathcal{F},\omega_C)
\]
for any coherent $\mathcal{O}_{\widetilde{C}}$-module $\mathcal{F}$.
Applying this with $\mathcal{F}=\mathcal{O}_{\widetilde{C}}$, we obtain
\[
\nu_*\omega_{\widetilde{C}}
\;\cong\;
\mathcal{H}\!om_C(\nu_*\mathcal{O}_{\widetilde{C}},\omega_C).
\]

\medskip
\noindent
\textit{ Inclusion of structure sheaves.}
There is a natural inclusion
\[
\mathcal{O}_C \hookrightarrow \nu_*\mathcal{O}_{\widetilde{C}},
\]
which is an isomorphism over the smooth locus of $X$ and strict precisely at
the singular points.

\medskip
\noindent
\textit{ Definition of the conductor.}
The conductor ideal $\mathfrak{c}$ is defined as
\[
\mathfrak{c}
=
\operatorname{Ann}_{\mathcal{O}_C}
\!\left(
\nu_*\mathcal{O}_{\widetilde{C}} / \mathcal{O}_X
\right)
=
\mathcal{H}\!om_C(\nu_*\mathcal{O}_{\widetilde{C}},\mathcal{O}_C).
\]
Thus $\mathfrak{c}$ consists exactly of those local sections of $\mathcal{O}_C$
that act identically on $\mathcal{O}_C$ and on $\nu_*\mathcal{O}_{\widetilde{C}}$.

\medskip
\noindent
\textit{ Tensoring with the dualizing sheaf.}
Since $\omega_C$ is a coherent $\mathcal{O}_C$-module, we have
\[
\mathcal{H}\!om_C(\nu_*\mathcal{O}_{\widetilde{C}},\omega_C)
\;\cong\;
\mathcal{H}\!om_C(\nu_*\mathcal{O}_{\widetilde{C}},\mathcal{O}_C)
\otimes_{\mathcal{O}_C}
\omega_C.
\]
Using the identification from Step 4, this becomes
\[
\mathcal{H}\!om_C(\nu_*\mathcal{O}_{\widetilde{C}},\omega_C)
\;\cong\;
\mathfrak{c} \otimes_{\mathcal{O}_C} \omega_C.
\]

\medskip
\noindent
\textit{ Interpretation as a twist of $\omega_C$.}
By definition,
\[
\omega_C(\mathfrak{c}) := \omega_C \otimes_{\mathcal{O}_C} \mathfrak{c}.
\]
Therefore,
\[
\mathcal{H}\!om_C(\nu_*\mathcal{O}_{\widetilde{C}},\omega_C)
\;\cong\;
\omega_C(\mathfrak{c}).
\]

\medskip
\noindent
\textit{ Conclusion.}
Combining Steps 2 and 6, we obtain a canonical isomorphism
\[
\nu_*\omega_{\widetilde{C}}
\;\cong\;
\omega_C(\mathfrak{c}),
\]
as claimed.

\medskip
\noindent
\textbf{Canonicity.}
All identifications arise functorially from Grothendieck duality and the
definition of the conductor, and hence the resulting isomorphism is canonical.
\end{proof}

\bigskip


\section{Appendix F}

\subsection*{Detailed Explanation of  Remark \ref{remark-trop1}.}

We explain the following remark \emph{line by line}, unpacking every mathematical
idea it contains.

\begin{remark}
Residue spanning guarantees that the tropical curve captures all first-order
information of differentials on the degenerating algebraic curve. In this sense,
the tropical Jacobian is not merely a combinatorial shadow, but a faithful
linearized limit of the classical Jacobian.
\end{remark}

\subsection*{Background setting}

We are working in a degeneration framework:
\begin{itemize}
  \item[(a)] A smooth algebraic curve $C_t$ degenerates, as $t \to 0$, to a singular
  (typically nodal) curve $C_0$.
  \item[(b)] Associated to this degeneration is a \emph{tropical curve} $\Gamma$,
  which is a metric graph encoding the combinatorial and valuation-theoretic
  data of the degeneration.
  \item[(c)] We study how holomorphic or meromorphic differentials on $C_t$
  behave in the limit, and how this behavior is reflected on $\Gamma$.
\end{itemize}

\medskip

\paragraph{Residues.}
When a smooth curve degenerates to a nodal curve, differentials on the smooth
fibers limit to \emph{logarithmic differentials} on the central fiber.
At each node, such a differential has two local branches with residues
$r$ and $-r$.

\paragraph{Residue data as linear information.}
Collecting all residues at the nodes produces a vector
\[
(r_e)_{e \in E(\Gamma)} \in \mathbb{R}^{E(\Gamma)},
\]
where each edge $e$ of the tropical curve corresponds to a node of the
degenerating algebraic curve.

\vspace{0.2cm}

The \emph{residue spanning condition} means that every admissible residue vector
satisfying the balancing conditions
\[
\sum_{e \ni v} \pm r_e = 0 \quad \text{at every vertex } v
\]
arises from an actual differential on the algebraic curve.
Equivalently, residues span the full space of harmonic $1$-forms on $\Gamma$.

\paragraph{Guarantees.}
Thus, ``residue spanning guarantees'' means that there is no loss of linear data
when passing from algebraic differentials to tropical ones.

\medskip

{\em First-order information} refers to the \emph{linearized} behavior of differentials:
periods, residues, and infinitesimal variations, as opposed to nonlinear or
higher-order corrections.

\paragraph{\it Differentials and periods.}
For a smooth curve $C_t$, a differential $\omega_t$ defines a linear functional
on homology via integration:
\[
\gamma \mapsto \int_\gamma \omega_t.
\]
As $t \to 0$, these periods degenerate, but their leading-order behavior is
controlled by residues and edge lengths on $\Gamma$.

\paragraph{\it Captured by the tropical curve.}
The tropical curve $\Gamma$ encodes:
\begin{enumerate}
  \item edge lengths (coming from valuations),
  \item harmonicity conditions at vertices,
  \item residue data along edges.
\end{enumerate}
Together, these encode exactly the same linear constraints satisfied by
degenerating differentials.

\vspace{0.2cm}

\paragraph{\it Degenerating curve.}
The phrase emphasizes that we are not studying a fixed singular curve, but a
\emph{family} whose smooth members carry rich analytic data.

\vspace{0.2cm}
\paragraph{\it Compatibility of limits.}
The claim is that the tropicalization process preserves the full linear structure
of the space of differentials in the limit.

\vspace{0.3cm}

\paragraph{\it Classical Jacobian.}
The Jacobian $\mathrm{Jac}(C_t)$ is a complex torus obtained from periods of
differentials:
\[
\mathrm{Jac}(C_t) = H^0(C_t,\Omega^1)^* / H_1(C_t,\mathbb{Z}).
\]

\vspace{0.2cm}
\paragraph{\it Tropical Jacobian.}
The tropical Jacobian $\mathrm{Jac}(\Gamma)$ is defined as
\[
\mathrm{Jac}(\Gamma)
= H^1(\Gamma,\mathbb{R}) / H^1(\Gamma,\mathbb{Z}),
\]
a real torus built from harmonic $1$-forms on the graph.

\vspace{0.2cm}
\paragraph{\it Not merely combinatorial.}
Calling it a ``combinatorial shadow'' would suggest it only remembers
topological or discrete data.
Residue spanning shows this is false: the tropical Jacobian remembers the full
linear limit of the period lattice.

\vspace{0.2cm}

\paragraph{\it Linearized limit.}
As $t \to 0$, the Jacobians $\mathrm{Jac}(C_t)$ degenerate.
After suitable rescaling, their limit is governed by linear data:
residues, edge lengths, and harmonic forms.

\vspace{0.2cm}

\paragraph{\it Faithfulness.}
``Faithful'' means:
\begin{enumerate}
  \item no linear relations are lost,
  \item no spurious relations are introduced,
  \item the tropical Jacobian is canonically identified with the limit of
  $\mathrm{Jac}(C_t)$ at the level of first-order geometry.
\end{enumerate}

\vspace{0.2cm}

\paragraph{\it Conclusion.}
Thus, the remark asserts that tropical geometry does not merely approximate the
classical theory, but precisely captures its linear degeneration through the
mechanism of residue spanning.


\section{Appendix G}

\subsection*{Proof that $\omega_C$ is generated by $\displaystyle \omega=\frac{dx}{\partial f/\partial y}$ on the smooth locus}

We give a complete, line-by-line proof of the standard fact that for a plane
curve, the dualizing sheaf is locally generated on the smooth locus by the
differential $\frac{dx}{\partial f/\partial y}$.


\subsection*{1. Setup}

Let $C \subset \mathbb{A}^2$ be a reduced plane curve defined by a single equation
\[
C : f(x,y)=0,
\]
where $f \in \mathbb{C}[x,y]$.
We assume that $C$ is smooth at the point under consideration.

Let $U \subset C$ be an open subset contained in the smooth locus
$C_{\mathrm{sm}}$.


\subsection*{2. The dualizing sheaf on a smooth curve}

On a smooth curve, the dualizing sheaf coincides with the canonical bundle.
Thus, on $C_{\mathrm{sm}}$,
\[
\omega_C \cong \Omega^1_{C_{\mathrm{sm}}},
\]
the sheaf of Kähler differentials on $C_{\mathrm{sm}}$.

Therefore, to describe $\omega_C$ locally on $C_{\mathrm{sm}}$, it suffices to
describe a generator of $\Omega^1_{C_{\mathrm{sm}}}$.


\subsection*{3. Kähler differentials of a plane curve}

Let $A = \mathbb{C}[x,y]/(f)$ be the coordinate ring of $C$.
The module of Kähler differentials $\Omega^1_A$ fits into the exact sequence
\[
A \cdot df
\;\longrightarrow\;
A\,dx \oplus A\,dy
\;\longrightarrow\;
\Omega^1_A
\;\longrightarrow\;
0,
\]
where
\[
df = \frac{\partial f}{\partial x}dx + \frac{\partial f}{\partial y}dy.
\]

Thus $\Omega^1_A$ is generated by $dx$ and $dy$, subject to the single relation
\[
\frac{\partial f}{\partial x}dx
+
\frac{\partial f}{\partial y}dy
=
0.
\]


\subsection*{4. Solving the relation on the smooth locus}

Let $p \in C_{\mathrm{sm}}$.
By definition of smoothness, the gradient of $f$ does not vanish at $p$, i.e.
\[
\left(
\frac{\partial f}{\partial x}(p),
\frac{\partial f}{\partial y}(p)
\right)
\neq (0,0).
\]

Without loss of generality, we may assume
\[
\frac{\partial f}{\partial y}(p) \neq 0.
\]
Hence $\frac{\partial f}{\partial y}$ is a unit in the local ring
$\mathcal{O}_{C,p}$.

The relation
\[
\frac{\partial f}{\partial x}dx
+
\frac{\partial f}{\partial y}dy
=
0
\]
can therefore be solved for $dy$:
\[
dy
=
-
\frac{\partial f/\partial x}{\partial f/\partial y}
\,dx.
\]

Thus every Kähler differential on $C_{\mathrm{sm}}$ can be written uniquely as
a multiple of $dx$.


\subsection*{5. A local generator of $\Omega^1_{C_{\mathrm{sm}}}$}

Define the differential
\[
\omega := \frac{dx}{\partial f/\partial y}.
\]

Since $\partial f/\partial y$ is a unit on $U$, the expression $\omega$ is a
regular differential on $U$.

Moreover, any differential $\alpha \in \Omega^1_{C_{\mathrm{sm}}}(U)$ can be
written as
\[
\alpha = g \, dx = g\,\frac{\partial f}{\partial y}\,\omega
\]
for some regular function $g$ on $U$.
Thus $\omega$ generates $\Omega^1_{C_{\mathrm{sm}}}$ as an
$\mathcal{O}_{C_{\mathrm{sm}}}$--module.


\subsection*{6. Independence of the coordinate choice}

If instead $\frac{\partial f}{\partial x}$ is nonzero at $p$, then one may solve
the relation for $dx$ and obtain the alternative generator
\[
\omega = -\frac{dy}{\partial f/\partial x}.
\]

These two expressions agree up to multiplication by a unit, so they define the
same invertible sheaf $\omega_C$ on the smooth locus.


\subsection*{7. Conclusion}

We have shown that:
\begin{enumerate}
  \item on the smooth locus of the plane curve $C$, the dualizing sheaf
        $\omega_C$ coincides with the canonical bundle;
  \item the canonical bundle is generated locally by Kähler differentials;
  \item whenever $\partial f/\partial y \neq 0$, a local generator is given by
        $\frac{dx}{\partial f/\partial y}$.
\end{enumerate}

Therefore, on the smooth locus of $C$, the dualizing sheaf $\omega_C$ is
generated by the regular differential
\[
{
\omega = \frac{dx}{\partial f/\partial y}.
}
\]


\subsection*{Proof that a local generator of $\omega_C$ is $\displaystyle \omega=\frac{dt}{t^2}$}

We give a complete and rigorous proof for the cuspidal singularity, written
line-by-line and suitable for inclusion in a research paper.


\subsection*{1. The cuspidal curve and its normalization}

Let $C \subset \mathbb{P}^2$ be the plane curve defined by
\[
C:\quad y^2 = x^3 .
\]
The curve $C$ is irreducible and has a unique singular point at $(0,0)$, which is
a cusp.

Let
\[
\nu \colon \widetilde{C} \longrightarrow C
\]
be the normalization of $C$.  Then $\widetilde{C} \cong \mathbb{P}^1$, and on the
affine chart $\widetilde{C}\setminus\{\infty\}$ the normalization map is given by
\[
x = t^2,
\qquad
y = t^3,
\]
where $t$ is a local coordinate on $\widetilde{C}$.
The unique point lying over the cusp is $t=0$.


\subsection*{2. The dualizing sheaf for a plane curve}

Let $C \subset \mathbb{A}^2$ be a reduced plane curve defined by a polynomial
$f(x,y)=0$.
On the smooth locus of $C$, the dualizing sheaf $\omega_C$ is generated by the
regular differential
\[
\omega = \frac{dx}{\partial f/\partial y}.
\]

In our case,
\[
f(x,y) = y^2 - x^3,
\qquad
\frac{\partial f}{\partial y} = 2y.
\]
Hence, on the smooth locus of $C$,
\[
\omega = \frac{dx}{2y}.
\]

This expression defines a regular differential on $C_{\mathrm{sm}}$ and extends
to a generator of the dualizing sheaf $\omega_C$.


\subsection*{3. Pullback to the normalization}

We now pull back $\omega$ to the normalization $\widetilde{C}$ using the map
$\nu$.
Using the parametrization $x=t^2$, $y=t^3$, we compute
\[
dx = 2t\,dt,
\qquad
y = t^3.
\]
Substituting into $\omega = \frac{dx}{2y}$ gives
\[
\nu^*\omega
=
\frac{2t\,dt}{2t^3}
=
\frac{dt}{t^2}.
\]

Thus the pullback of a generator of $\omega_C$ to the normalization is the
meromorphic differential $\frac{dt}{t^2}$.


\subsection*{4. Pole order and admissibility}

The differential $\frac{dt}{t^2}$ has a pole of order exactly $2$ at $t=0$ and no
other poles on $\widetilde{C}$.

This pole order is forced by the geometry:
\begin{itemize}
  \item[(i)] the cusp has $\delta$--invariant $\delta=1$,
  \item[(ii)] the dualizing sheaf satisfies
        \(
        \omega_C \cong \nu_*\bigl(\omega_{\mathbb{P}^1}(2\cdot 0)\bigr),
        \)
  \item[(iii)] hence sections of $\omega_C$ correspond to differentials on
        $\widetilde{C}$ with poles of order at most $2$ at $t=0$.
\end{itemize}

The differential $\dfrac{dt}{t^2}$ therefore represents a local generator of
$\omega_C$.


\subsection*{5. Conclusion}

We have shown that:
\begin{itemize}
  \item[(i)] on the smooth locus of $C$, the dualizing sheaf $\omega_C$ is generated
        by $\frac{dx}{2y}$;
  \item[(ii)] pulling back to the normalization yields $\frac{dt}{t^2}$;
  \item[(iii)] this differential has precisely the pole order dictated by the
        $\delta$--invariant of the cusp.
\end{itemize}

Therefore, on the affine chart of $\mathbb{P}^1$ with coordinate $t$, a local
generator of the dualizing sheaf $\omega_C$ is given by
\[
{
\omega = \frac{dt}{t^2}.
}
\]





 \bigskip

\end{document}